\documentclass[12pt]{amsart}

\usepackage[left=2.5cm,right=2.5cm]{geometry}
\usepackage[mathscr]{euscript}
\usepackage{bookmark}
\usepackage{mathabx}
\usepackage{amssymb}

\newtheorem{theorem}{Theorem}[section]
\newtheorem{lemma}[theorem]{Lemma}
\newtheorem{corollary}[theorem]{Corollary}

\theoremstyle{definition}
\newtheorem{definition}[theorem]{Definition}
\newtheorem{example}[theorem]{Example}

\theoremstyle{remark}
\newtheorem*{remark}{Remark}

\numberwithin{equation}{section}

\DeclareMathOperator{\KR}{KR}
\DeclareMathOperator{\Hom}{Hom}
\DeclareMathOperator{\ch}{ch}
\DeclareMathOperator{\res}{res}

\newcommand{\normord}[1]{:\mathrel{#1}:}

\begin{document}

\title[Quantum \textit{Q}-systems and fermionic sums]{Quantum \textit{Q}-systems and fermionic sums -- the non-simply laced case}

\begin{abstract}
In this paper, we seek to prove the equality of the $q$-graded fermionic sums conjectured by Hatayama \emph{et al.} \cite{HKOTY99} in its full generality, by extending the results of Di Francesco and Kedem \cite{DFK14} to the non-simply laced case. To this end, we will derive explicit expressions for the quantum $Q$-system relations, which are quantum cluster mutations that correspond to the classical $Q$-system relations, and write the identity of the $q$-graded fermionic sums as a constant term identity. As an application, we will show that these quantum $Q$-system relations are consistent with the short exact sequence of the Feigin-Loktev fusion product of Kirillov-Reshetikhin modules obtained by Chari and Venkatesh \cite{CV15}.
\end{abstract}

%    Remove any unused author tags.

%    author one information
\author{Mingyan Simon Lin}

\maketitle

\section{Introduction}\label{Section 1}

Kirillov-Reshetikhin (KR-) modules of Yangians first appeared in \cite{KR87}. These are simple, finite-dimensional modules over the Yangian $Y(\mathfrak{g})$ of a simple Lie algebra $\mathfrak{g}$. Their $\mathfrak{g}$-characters satisfy a family of functional relations known as the $Q$-system \cite{KR87, Kirillov89}, which is a system of recurrence relations that comes from the fusion procedure for transfer matrices in the generalized Heisenberg spin chains \cite{KNS94}. Moreover, Kirillov and Reshetikhin \cite{KR87} gave fermionic formulas for the multiplicities of an irreducible $\mathfrak{g}$-module in the tensor product of KR-modules over the Yangian. In the same paper, Kirillov and Reshetikhin (and later generalized by Hatayama \emph{et al.} \cite{HKOTY99}) showed that these fermionic sums have a natural deformation to the $q$-graded case, and this grading was defined combinatorially. It was conjectured by Hatayama \emph{et al.} \cite{HKOTY99}, and proved shortly after by Kirillov \emph{et al.} \cite{KSS02} that the $q$-grading arises in the context of crystals of tensor products of KR-modules over $U_q(\widehat{\mathfrak{sl}}_{r+1})$, by establishing a bijection between rigged configurations and crystal paths in type $A$.

In a subsequent development, KR-modules over the quantum affine algebra $U_q(\widehat{\mathfrak{g}})$ were defined for all types, and these modules are defined in terms of their Drinfeld polynomials \cite{Chari01}. Later, Chari and Moura defined KR-modules for the current algebra $\mathfrak{g}[t]$ in terms of generators and relations \cite{CM06}. Morover, Hatayama \emph{et al.} \cite{HKOTY99} showed that if the $U_q(\mathfrak{g})$-characters of the KR-modules over $U_q(\widehat{\mathfrak{g}})$ satisfy the $Q$-system relations, along with some other asymptotic conditions, then the multiplicity of an irreducible $U_q(\mathfrak{g})$-module in a tensor product of KR-modules over $U_q(\widehat{\mathfrak{g}})$ is expressed in terms of an extended fermionic sum. Subsequently, Nakajima showed that the $q$-characters of the KR-modules over $U_q(\widehat{\mathfrak{g}})$ satisfy the $T$-system relations in the simply-laced case \cite{Nakajima03}, and Hernandez proved, using different methods from \cite{Nakajima03}, that the $q$-characters of the KR-modules over $U_q(\widehat{\mathfrak{g}})$ satisfy the $T$-system relations in the non-simply laced case \cite{Hernandez06}. As the $Q$-system relations are obtained from the $T$-system relations by forgetting the dependence on the spectral parameters, it follows that the $U_q(\mathfrak{g})$-characters of the KR-modules over $U_q(\widehat{\mathfrak{g}})$ satisfy the $Q$-system relations.

In a separate development, Feigin and Loktev introduced a $\mathfrak{g}$-equivariant grading for the tensor product of localized $\mathfrak{g}[t]$-modules \cite{FL99}, and subsequently it was shown in \cite{AK07, DFK08} that the graded multiplicity of an irreducible $\mathfrak{g}$-module in the Feigin-Loktev graded tensor product \cite{FL99} of KR-modules over $\mathfrak{g}[t]$, is given by the $q$-graded fermionic sums. A crucial ingredient in this interpretation is the polynomiality property of the solutions of the $Q$-system \cite{DFK08}. Subsequently, Di Francesco and Kedem \cite{DFK09} showed that the polynomiality property is a consequence of the cluster algebraic formulation of the $Q$-system relations. As cluster algebras admit natural quantum deformations \cite{BZ05}, they were able to use these quantum cluster algebras to prove a conjecture of Hatayama \emph{et al.} concerning the equality of $q$-graded fermionic sums \cite[Conjecture 4.3]{HKOTY99} in the simply-laced case.

In the present paper, we seek to extend the results in \cite{DFK14}, and show that \cite[Conjecture 4.3]{HKOTY99} holds in the non-simply laced case. More precisely, we will first derive the quantum $Q$-system relations corresponding to the non-simply laced quantum $Q$-system cluster algebras. By extending the tools and techniques in \cite{DFK08, DFK14}, the results in the current paper, along with that in \cite{DFK14}, would show that \cite[Conjecture 4.3]{HKOTY99} holds in its entire generality.

\subsection{Main results}

Let us summarize our results briefly. KR-modules over $\mathfrak{g}[t]$ are parameterized by $\alpha\in I_r$ (where $I_r=\{1,\cdots,r\}$ is the set of labels of the simple roots of $\mathfrak{g}$), $m\in\mathbb{Z}_+$, along with a non-zero localization parameter $z\in\mathbb{C}^*$, and they are denoted by $\KR_{\alpha,m}(z)$. As noted above, the $\mathfrak{g}$-characters of these KR-modules over $\mathfrak{g}[t]$ satisfy the $Q$-system relations. More precisely, if we let $Q_{\alpha,k}=\ch\res_{\mathfrak{g}}^{\mathfrak{g}[t]}\KR_{\alpha,k}(z)$ for all $\alpha\in I_r$ and $k\in\mathbb{Z}_+$, then the $Q_{\alpha,k}$'s satisfy the following relation for all $\alpha\in I_r$ and $k\in\mathbb{N}$ \cite{KR87, KNS94}:
\begin{equation}\label{eq:1.1}
Q_{\alpha,k+1}Q_{\alpha,k-1}=Q_{\alpha,k}^2-\prod_{\beta\sim\alpha}\prod_{i=0}^{|C_{\alpha\beta}|-1}Q_{\beta,\lfloor t_{\beta}(k+i)/t_{\alpha}\rfloor}.
\end{equation}
Here, $\beta\sim\alpha$ means that $\beta$ is connected to $\alpha$ in the Dynkin diagram, $C_{\alpha\beta}$ are the entries of the Cartan matrix $C$ of $\mathfrak{g}$, and $t_{\alpha}$ are the integers that satisfy $\min_{\alpha\in I_r}t_{\alpha}=1$ and $C_{\alpha\beta}t_{\beta}=C_{\beta\alpha}t_{\alpha}$ for all $\alpha,\beta\in I_r$. As previously mentioned, the $Q$-system relations \eqref{eq:1.1} could be interpreted as cluster algebra mutations \cite{Kedem08, DFK09}. As cluster algebras admit natural quantum deformations \cite{BZ05}, we can extract the corresponding quantum $Q$-system relations in the quantum $Q$-system cluster algebra. More precisely, we have:

\begin{theorem}\label{1.1}
The quantum $Q$-system relations for $\mathfrak{g}$ are given by 
\begin{equation*}
q^{\frac{1}{\delta}\Lambda_{\alpha\alpha}}\widehat{Q}_{\alpha,k+1}\widehat{Q}_{\alpha,k-1}=\widehat{Q}_{\alpha,k}^2-\normord{\prod_{\beta\sim\alpha}\prod_{i=0}^{|C_{\alpha\beta}|-1}\widehat{Q}_{\beta,\lfloor t_{\beta}(k+i)/t_{\alpha}\rfloor}},
\end{equation*}
with commutation relations
\begin{equation*}
\widehat{Q}_{\alpha,i}\widehat{Q}_{\beta,j}=q^{\frac{1}{\delta}(\Lambda_{\beta\alpha}i-\Lambda_{\alpha\beta}j)}\widehat{Q}_{\beta,j}\widehat{Q}_{\alpha,i}
\end{equation*}
for all $\alpha,\beta\in I_r$ and $i,j\in\mathbb{Z}$ such that $\widehat{Q}_{\alpha,i}$ and $\widehat{Q}_{\beta,j}$ are in the same quantum cluster. Here, $\delta=\det(C)$, $\Lambda_{\alpha\beta}$ are the entries of the matrix $\delta C^{-1}$, and if $y_1,\ldots,y_k$ are pairwise $q$-commuting elements with $y_iy_j=q^{C(y_i,y_j)}y_jy_i$, then the ordered product $\normord{y_1y_2\cdots y_k}$ is given by 
\begin{equation*}
\normord{y_1y_2\cdots y_k}=q^{-\frac{1}{2}\sum_{1\leq i<j\leq n}C(y_i,y_j)}y_1\cdots y_k.
\end{equation*}
\end{theorem}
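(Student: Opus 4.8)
The plan is to start from the known cluster-algebra realization of the classical $Q$-system \eqref{eq:1.1}. Following the strategy used in the simply-laced case in \cite{DFK14}, one has a quiver (exchange matrix $\widetilde{B}$) on a vertex set indexed by pairs $(\alpha,j)$, together with a distinguished sequence of mutations whose exchange relations reproduce \eqref{eq:1.1}; in the non-simply laced case the quiver must carry a period-$t_\alpha$ structure in the $\alpha$-direction so that the floor functions $\lfloor t_\beta(k+i)/t_\alpha\rfloor$ appear. The quantum $Q$-system then lives in the quantum cluster algebra attached to a compatible pair $(\widetilde{B},\Lambda)$ in the sense of Berenstein--Zelevinsky, and the content of the theorem is to (a) pin down the quantization matrix $\Lambda$ and (b) apply the quantum mutation rule.

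First I would write down the candidate quantization: declare $\widehat{Q}_{\alpha,i}\widehat{Q}_{\beta,j}=q^{\frac{1}{\delta}(\Lambda_{\beta\alpha}i-\Lambda_{\alpha\beta}j)}\widehat{Q}_{\beta,j}\widehat{Q}_{\alpha,i}$ with $\Lambda=\delta C^{-1}$, and verify consistency: skew-symmetry under $(\alpha,i)\leftrightarrow(\beta,j)$ is immediate, and then I would check that the resulting matrix restricted to the initial seed is compatible with $\widetilde{B}$, i.e.\ that $\widetilde{B}^{T}\Lambda$ equals a positive diagonal matrix on the mutable indices. This computation reduces to two facts about the Cartan data: $CC^{-1}=I$, and the symmetrizability relation $C_{\alpha\beta}t_\beta=C_{\beta\alpha}t_\alpha$, which governs exactly how the level shifts $\lfloor t_\beta(k+i)/t_\alpha\rfloor$ interact with $C^{-1}$, together with the integrality of $\delta C^{-1}$. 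Since mutation preserves compatibility, once the pair is compatible on one seed it is compatible on all seeds in the $Q$-system orbit; since $\widetilde{B}$ has full rank, $\Lambda$ is then forced up to the usual freedom, and a short induction along the mutation sequence shows that the linear-in-$(i,j)$ ansatz for the $q$-commutation exponents is reproduced on every cluster, yielding the stated commutation relations.

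Then I would apply the Berenstein--Zelevinsky quantum exchange relation at the vertex implementing the step $\widehat{Q}_{\alpha,k-1},\widehat{Q}_{\alpha,k}\rightsquigarrow\widehat{Q}_{\alpha,k+1}$. The mutated variable satisfies an identity of the form $\widehat{Q}_{\alpha,k+1}\widehat{Q}_{\alpha,k-1}=q^{r_+}M_+ + q^{r_-}M_-$, where $M_\pm$ are the monomials read off from the positive and negative parts of the relevant column of $\widetilde{B}$ and the exponents $r_\pm$ are determined by the entries of $\Lambda$ on that seed. The positive monomial is, up to a power of $q$, $\widehat{Q}_{\alpha,k}^{2}$, and the negative monomial is, up to a power of $q$, $\prod_{\beta\sim\alpha}\prod_{i=0}^{|C_{\alpha\beta}|-1}\widehat{Q}_{\beta,\lfloor t_\beta(k+i)/t_\alpha\rfloor}$; the powers of $q$ attached to each monomial are precisely those repackaged by the normal-ordering symbol $\normord{\,\cdot\,}$ defined in the statement, and collecting the leftover power onto the left-hand side produces the factor $q^{\frac{1}{\delta}\Lambda_{\alpha\alpha}}$. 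This final step is a bookkeeping computation with the explicit entries of $\Lambda$ restricted to the seed.

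The main obstacle is the non-simply laced combinatorics, and it is concentrated in two places. First, one must write down the correct exchange matrix and the correct (non-uniform) mutation sequence so that the floor functions $\lfloor t_\beta(k+i)/t_\alpha\rfloor$ emerge, which requires careful tracking of which $\widehat{Q}_{\beta,j}$ belongs to which cluster. Second, one must verify that $\widetilde{B}^{T}\Lambda$ is diagonal and positive, and that this is preserved along the mutation sequence: the level-shift arithmetic has to mesh exactly with $C_{\alpha\beta}t_\beta=C_{\beta\alpha}t_\alpha$ and with the integrality of $\delta C^{-1}$, and getting all signs and the diagonal entries right is where the real work lies. Everything downstream --- reading off the exchange relation and matching the normal-ordering constant --- is routine.
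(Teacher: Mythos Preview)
Your proposal is correct and follows essentially the same route as the paper: set up the compatible pair $(B,\widetilde{\Lambda})$ with $B$ the explicit $2r\times 2r$ exchange matrix $\begin{pmatrix}C^t-C & -C^t\\ C & 0\end{pmatrix}$ from \cite{DFK09} and $\widetilde{\Lambda}$ built blockwise from $\Lambda=\delta C^{-1}$ so that $B\widetilde{\Lambda}=-\delta I$, apply the Berenstein--Zelevinsky mutation rule to read off the exchange relation, and prove the linear-in-$(i,j)$ commutation law by exactly the induction you describe (this is Lemma~\ref{3.6}, whose base case uses the explicit seed $\widetilde{\Lambda}$ and whose inductive step checks that $\widehat{Q}_{\alpha,i}$ commutes with the ratio $\widehat{\mathscr{T}}_{\beta,k}$ appearing in the mutation). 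The one bookkeeping point you do not mention is the handling of the minus sign: since quantum mutations produce only plus signs, the paper first works with renormalized variables $\widehat{R}_{\alpha,k}=\epsilon_\alpha\widehat{Q}_{\alpha,k}$ (with $\epsilon_\alpha=e^{i\pi\sum_\beta(C^{-1})_{\beta\alpha}}$) satisfying a subtraction-free relation \eqref{eq:3.15}, and only afterwards undoes this twist to recover the stated form \eqref{eq:3.20}.
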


Here, we would like to emphasize that there isn't a unique quantization of a cluster algebra (and hence the $Q$-system relations). The quantization of the cluster algebra chosen here is the one that gives rise to the Poisson structure on the cluster algebra defined in \cite{GSV03}. 

Our next result concerns the identity of graded sums $M_{\lambda,\mathbf{n}}(q)=N_{\lambda,\mathbf{n}}(q)$. To begin, we let $\mathbf{n}=(n_{\alpha,i})_{\alpha\in I_r,i\in\mathbb{N}}$ be a vector of nonnegative integers that parameterizes a finite set of KR-modules over $\mathfrak{g}[t]$, where $n_{\alpha,i}$ is the number of KR-modules of type $\KR_{\alpha,i}(z)$, and let $\mathcal{F}_{\mathbf{n}}^*$ denote the corresponding Feigin-Loktev graded tensor product of KR-modules parameterized by $\mathbf{n}$ equipped with a $\mathfrak{g}$-equivariant grading, which we will call the fusion product of KR-modules parameterized by $\mathbf{n}$. The graded components $\mathcal{F}_{\mathbf{n}}^*[m]$ of $\mathcal{F}_{\mathbf{n}}^*$ are $\mathfrak{g}$-modules for all $m\in\mathbb{Z}_+$, and we define the generating function $\mathcal{M}_{\lambda,\mathbf{n}}(q)$ for the graded multiplicities of the irreducible $\mathfrak{g}$-module $V(\lambda)$ (where $\lambda$ is a dominant integral weight of $\mathfrak{g}$) in $\mathcal{F}_{\mathbf{n}}^*$ by
\begin{equation}\label{eq:1.2}
\mathcal{M}_{\lambda,\mathbf{n}}(q)=\sum_{m=0}^{\infty}\dim\Hom_{\mathfrak{g}}(\mathcal{F}_{\mathbf{n}}^*[m],V(\lambda))q^m.
\end{equation}
Here, $\Hom_{\mathfrak{g}}(\mathcal{F}_{\mathbf{n}}^*[m],V(\lambda))$ denotes the space of $\mathfrak{g}$-equivariant maps from $\mathcal{F}_{\mathbf{n}}^*[m]$ to $V(\lambda)$. The graded character $\chi_{\mathbf{n}}(q;\mathbf{z})$ of $\mathcal{F}_{\mathbf{n}}^*$ is defined by 
\begin{equation}\label{eq:1.3}
\chi_{\mathbf{n}}(q;\mathbf{z})=\sum_{\lambda}\mathcal{M}_{\lambda,\mathbf{n}}(q)\ch_{\mathbf{z}}V(\lambda),
\end{equation}
where the sum is over all dominant weights $\lambda$, and $\mathbf{z}=(z_1,\cdots,z_r)$ with $z_{\alpha}=e^{\omega_{\alpha}}$, and $\omega_{\alpha}$ is the fundamental weight corresponding to $\alpha$ for all $\alpha\in I_r$. 

It was shown in \cite{AK07, DFK08} that the graded multiplicities $\mathcal{M}_{\lambda,\mathbf{n}}(q^{-1})$ could be given in terms of a $q$-graded fermionic formula $M_{\lambda,\mathbf{n}}(q^{-1})$. In order to define $M_{\lambda,\mathbf{n}}(q^{-1})$ precisely, we need some extra notations. For any vector $\mathbf{m}=(m_{\alpha,i})_{\alpha\in I_r,i\in\mathbb{N}}$ of nonnegative integers and $\alpha\in I_r$, we define the total spin $q_{\alpha}$ as follows:
\begin{equation}\label{eq:1.4}
q_{\alpha}=\ell_{\alpha}+\sum_{\beta\in I_r,j\in\mathbb{N}}jC_{\alpha\beta}m_{\beta,j}-\sum_{j=1}^{\infty}jn_{\alpha,j}.
\end{equation}
Next, for any $\alpha\in I_r$ and $i\in\mathbb{N}$, we define the vacancy numbers $p_{\alpha,i}$ and the quadratic form $Q(\mathbf{m},\mathbf{n})$ as follows:
{\allowdisplaybreaks
\begin{align}
p_{\alpha,i}
&=\sum_{j=1}^{\infty}\min(i,j)n_{\alpha,j}-\sum_{\beta\in I_r,j\in\mathbb{N}}\frac{C_{\alpha\beta}}{t_{\alpha}}\min(t_{\alpha}j,t_{\beta}i)m_{\beta,j},\\
Q(\mathbf{m},\mathbf{n})
&=\frac{1}{2}\sum_{\alpha,\beta\in I_r,i,j\in\mathbb{N}}\frac{C_{\alpha\beta}}{t_{\alpha}}\min(t_{\alpha}j,t_{\beta}i)m_{\alpha,i}m_{\beta,j}-\sum_{\alpha\in I_r,i,j\in\mathbb{N}}\min(i,j)m_{\alpha,i}n_{\alpha,j}.
\end{align}
The $M$-sum $M_{\lambda,\mathbf{n}}(q^{-1})$ \cite[(4.3)]{HKOTY99} is then given by
}
\begin{equation}\label{eq:1.7}
M_{\lambda,\mathbf{n}}(q^{-1})=\sum_{\substack{\mathbf{m}\geq\mathbf{0}\\q_{\alpha}=0,p_{\alpha,i}\geq0}}q^{Q(\mathbf{m},\mathbf{n})}\prod_{\alpha\in I_r,i\in\mathbb{N}}\begin{bmatrix}m_{\alpha,i}+p_{\alpha,i}\\m_{\alpha,i}\end{bmatrix}_q,
\end{equation}
where 
\begin{equation*}
\begin{bmatrix}m+p\\m\end{bmatrix}_q=\frac{(q^{p+1};q)_{\infty}(q^{m+1};q)_{\infty}}{(q;q)_{\infty}(q^{p+m+1};q)_{\infty}},\quad (a;q)_{\infty}=\prod_{j=0}^{\infty}(1-aq^j).
\end{equation*}
The $N$-sum \cite[(4.16)]{HKOTY99} is defined similarly:
\begin{equation}\label{eq:1.8}
N_{\lambda,\mathbf{n}}(q^{-1})=\sum_{\substack{\mathbf{m}\geq\mathbf{0}\\q_{\alpha}=0}}q^{Q(\mathbf{m},\mathbf{n})}\prod_{\alpha\in I_r,i\in\mathbb{N}}\begin{bmatrix}m_{\alpha,i}+p_{\alpha,i}\\m_{\alpha,i}\end{bmatrix}_q.
\end{equation}
Here, the $N$-sum differs from the $M$-sum in that the summands on the right hand side of equation \eqref{eq:1.8} are not bounded by the constraint $p_{\alpha,i}\geq0$.

In order to show that $\mathcal{M}_{\lambda,\mathbf{n}}(q^{-1})=M_{\lambda,\mathbf{n}}(q^{-1})$, it suffices to show that $\mathcal{M}_{\lambda,\mathbf{n}}(1)=M_{\lambda,\mathbf{n}}(1)$, from which the desired identity would follow from the positivity of the $q$-graded sums. In order to prove that the latter identity holds, it was necessary to show that $\mathcal{M}_{\lambda,\mathbf{n}}(1)=N_{\lambda,\mathbf{n}}(1)$, which follows from \cite{HKOTY99, Nakajima03, Hernandez06}, and then show that $M_{\lambda,\mathbf{n}}(1)=N_{\lambda,\mathbf{n}}(1)$, which was shown by Di Francesco and Kedem in \cite{DFK08}. Subsequently, Di Francesco and Kedem \cite{DFK14} showed that $M_{\lambda,\mathbf{n}}(q^{-1})=N_{\lambda,\mathbf{n}}(q^{-1})$ in the simply-laced case. The main bulk of the paper is to show that the last identity holds in the non-simply laced case as well:

\begin{theorem}\label{1.2}
Let $\mathfrak{g}$ be a non-simply laced simple Lie algebra, let $\lambda$ be a dominant integral weight of $\mathfrak{g}$, and $\mathbf{n}=(n_{\alpha,i})_{\alpha\in I_r,i\in\mathbb{N}}$ be a vector of nonnegative integers that parameterizes a finite set of KR-modules over $\mathfrak{g}[t]$. Then we have 
\begin{equation*}
M_{\lambda,\mathbf{n}}(q^{-1})=N_{\lambda,\mathbf{n}}(q^{-1}).
\end{equation*}
\end{theorem}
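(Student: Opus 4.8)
The plan is to follow the strategy of Di Francesco and Kedem \cite{DFK14}, adapted to the non-simply laced setting using Theorem \ref{1.1}. The starting point is to recognize both $M_{\lambda,\mathbf{n}}(q^{-1})$ and $N_{\lambda,\mathbf{n}}(q^{-1})$ as constant terms of a generating function built from the quantum $Q$-system. First I would encode the constraints $q_\alpha = 0$ and the fugacity $\lambda$ by introducing formal variables and extracting an appropriate coefficient, so that $N_{\lambda,\mathbf{n}}(q^{-1})$ becomes a constant term (in the cluster variables) of a product of generating series $\sum_{m_{\alpha,i}\geq 0} (\text{power of }q)\, \widehat{Q}_{\alpha,i}^{\,\pm}\cdots$ acting on a highest-weight-type vector, where the quadratic form $Q(\mathbf{m},\mathbf{n})$ is reproduced precisely by the $q$-commutation relations $\widehat{Q}_{\alpha,i}\widehat{Q}_{\beta,j}=q^{\frac{1}{\delta}(\Lambda_{\beta\alpha}i-\Lambda_{\alpha\beta}j)}\widehat{Q}_{\beta,j}\widehat{Q}_{\alpha,i}$ and the normal-ordering prescription. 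The key computation here is to check that $\min(t_\alpha j, t_\beta i)\tfrac{C_{\alpha\beta}}{t_\alpha}$, the coefficient appearing in $Q(\mathbf{m},\mathbf{n})$ and in $p_{\alpha,i}$, matches what one gets by reordering the $\widehat{Q}$'s; this is exactly where the non-simply laced corrections (the factors $t_\alpha$ and the floor functions in Theorem \ref{1.1}) enter, and it is the main technical point that distinguishes this paper from \cite{DFK14}.

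Next I would relate $M$ and $N$ via the quantum $Q$-system recursion itself. The constraint $p_{\alpha,i}\geq 0$ that distinguishes the $M$-sum from the $N$-sum corresponds, on the level of the generating function, to a positivity/truncation that is automatically implemented when one writes the sum using the \emph{polynomial} solutions of the quantum $Q$-system, rather than arbitrary formal Laurent series. Concretely, I expect to show that the quantum $Q$-system relations of Theorem \ref{1.1}, iterated, express $\widehat{Q}_{\alpha,k}$ for $k$ large (or $k<0$) as a normally-ordered Laurent polynomial in the initial cluster variables, and that substituting this polynomial expansion into the constant-term expression for $N_{\lambda,\mathbf{n}}$ collapses it onto the bounded $M$-sum. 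The role of the floor functions $\lfloor t_\beta(k+i)/t_\alpha\rfloor$ is to guarantee that the recursion still closes on integer-indexed variables; I would verify case by case (types $B_r, C_r, F_4, G_2$) that the resulting mutation sequence is well-defined and produces the claimed Laurent polynomials, using the explicit form of the exchange matrix encoded in $\Lambda = \delta C^{-1}$.

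The heart of the argument is then a \emph{constant term identity}: once both sides are written as constant terms of the same quantum-affine-algebra-type object, the equality $M_{\lambda,\mathbf{n}}(q^{-1}) = N_{\lambda,\mathbf{n}}(q^{-1})$ reduces to showing that the "extra" terms in the $N$-sum (those with some $p_{\alpha,i}<0$) contribute zero to the constant term. Following \cite{DFK14}, this vanishing should come from a telescoping or from an explicit involution on the index set $\{\mathbf{m} : q_\alpha = 0,\ \exists\, p_{\alpha,i}<0\}$ that pairs terms with opposite signs — the sign being supplied by the $q$-binomial $\begin{bmatrix}m+p\\m\end{bmatrix}_q$ when $p<0$, which satisfies $\begin{bmatrix}m+p\\m\end{bmatrix}_q = (-1)^m q^{\binom{m}{2}+mp+\cdots}\begin{bmatrix}-p-1\\m\end{bmatrix}_q$ and in particular vanishes when $-m \leq p \leq -1$. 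I would isolate the terms that do \emph{not} automatically vanish from this identity and show they cancel in pairs under the reflection coming from the Weyl-group (or affine Weyl-group) symmetry that is built into the constant-term formalism; in the quantum setting this symmetry is realized by the conjugation action of the quantum $Q$-system generators, and verifying it respects the non-simply laced commutation relations of Theorem \ref{1.1} is what I anticipate will be the main obstacle.

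Finally, I would assemble the pieces: the constant-term expression gives $N_{\lambda,\mathbf{n}}(q^{-1})$ directly, the polynomiality of the quantum $Q$-system solutions (a consequence of their being quantum cluster variables) converts this to the bounded $M$-sum, and the vanishing of the surplus terms established above shows the two agree. Combined with $\mathcal{M}_{\lambda,\mathbf{n}}(1) = N_{\lambda,\mathbf{n}}(1)$ from \cite{HKOTY99, Nakajima03, Hernandez06} and the positivity of the $q$-graded sums, this yields $\mathcal{M}_{\lambda,\mathbf{n}}(q^{-1}) = M_{\lambda,\mathbf{n}}(q^{-1})$, completing the proof of \cite[Conjecture 4.3]{HKOTY99} in full generality. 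The parts I expect to be purely routine are the $q$-commutation bookkeeping and the explicit type-by-type mutation computations; the parts requiring genuine care are the precise matching of the quadratic form with the non-simply laced normal ordering, and the combinatorial vanishing lemma for the $p_{\alpha,i}<0$ terms.
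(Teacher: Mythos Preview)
Your high-level strategy---express $N_{\lambda,\mathbf{n}}^{(k)}$ as a constant-term evaluation of a quantum generating function, then show the $p_{\alpha,i}<0$ terms contribute nothing---matches the paper. But the mechanism you propose for the vanishing is not the one the paper uses, and your outline omits the step that actually does the work.

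The paper does \emph{not} use a sign-reversing involution, the negative-argument $q$-binomial identity, or any Weyl-group cancellation. Instead, the core of the argument is a complete \emph{factorization} of the generating function: by summing over the $m_{\alpha,i}$ one index at a time and using the identity $\sum_{a\geq 0}\begin{bmatrix}a+b\\a\end{bmatrix}_q \widehat{Y}_{\alpha,i+1}^a\widehat{Z}_{\alpha,i}^b=\widehat{Z}_{\alpha,i}^{-1}\widehat{Z}_{\alpha,i+1}^{b+1}$ (which is just the quantum $Q$-system relation rewritten), one shows that $Z_{\lambda,\mathbf{n}}^{(k)}(\widehat{\mathbf{Y}}_{\vec{s}_0})$ equals an explicit ordered product of $\widehat{Q}$'s times $Z_{\lambda,\mathbf{n}^{(j)}}^{(k-j)}(\widehat{\mathbf{Y}}_{\vec{s}_j})$. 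Once this factorization is in hand, the Laurent property of quantum cluster variables implies that a term with $q_{\beta,j}<0$ involves only nonnegative powers of $\widehat{Q}_{\beta,\pm 1}$; combined with the prefactor $\widehat{Z}_{\bullet,0}^{-1}\widehat{Z}_{\circ,0}^{-1}$, every such term carries a strictly positive power of $\widehat{Q}_{\beta,1}$ after the evaluation $\widehat{\mathbf{Q}}_0=1$, so its constant term is zero. No cancellation between distinct $\mathbf{m}$'s is needed.

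The genuinely non-simply-laced content, which your outline does not anticipate, is that the factorization must proceed through \emph{intermediate} generating functions $Z_{\lambda,\mathbf{n}}^{(k,p)}$ for $0<p<t_0$, living on quantum tori indexed by generalized Motzkin paths $\vec{s}_{i,p}$. One must sum over the short-root variables $m_{\alpha,t_0j+p}$ (with $\alpha\in\Pi_\bullet$) before the long-root variables $m_{\beta,j+1}$, because the vacancy numbers at the long roots depend on the former. Getting the modified quadratic forms $\overline{Q}_k^{(p)}$ and the precise ordering of the $\widehat{Q}$-monomials right so that each partial sum closes (Lemmas 4.3, 4.4, 4.9 and the appendix computations) is where the floor functions and the $t_\alpha$'s really enter; this is the substantive extension of \cite{DFK14}, not the commutation bookkeeping you flag as routine.
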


Our final result concerns an identity of graded characters of fusion product of KR-modules over $\mathfrak{g}[t]$ that extend \eqref{eq:1.1}:

\begin{theorem}\label{1.3}
Let 
\begin{equation*}
K_{\alpha,m}=\bigotimes_{\beta\sim\alpha}\bigotimes_{i=0}^{|C_{\alpha,\beta}|-1}\KR_{\beta,\left\lfloor\frac{t_{\beta}(m+i)}{t_{\alpha}}\right\rfloor}
\end{equation*}
for all $\alpha\in I_r$ and $m\in\mathbb{Z}_+$, and let $K_{\alpha,m}^{\star}$ be the fusion product corresponding to the tensor product $K_{\alpha,m}$ of KR-modules over $\mathfrak{g}[t]$. Then the graded characters of the fusion products of the KR-modules satisfy the following identity for all $\alpha\in I_r$ and $m\in\mathbb{N}$:
\begin{equation*}
\ch_q\KR_{\alpha,m+1}\star\KR_{\alpha,m-1}
=\ch_q \KR_{\alpha,m}\star\KR_{\alpha,m}
-q^m\ch_q K_{\alpha,m}^{\star}.
\end{equation*}
\end{theorem}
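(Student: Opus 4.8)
The plan is to deduce Theorem~\ref{1.3} from Theorem~\ref{1.1} by transporting the quantum $Q$-system relation through the linear functional $\rho$ whose existence is asserted after Theorem~\ref{1.2}. Concretely, one first observes that the ordered products $\widehat{P}_{\mathbf n}$ of equation~\eqref{eq:1.9} are precisely the images under which $\rho$ records the $q$-characters $\chi_{\mathbf n}(q;\mathbf z)$ of the fusion products $\mathcal F^*_{\mathbf n}$; since the Feigin--Loktev fusion product is (up to grading shift) independent of the localization parameters and its graded character is computed by $\ch_q\mathcal F^*_{\mathbf n}=\chi_{\mathbf n}(q;\mathbf z)$, it suffices to prove the stated relation after applying $\rho$, i.e.\ to prove the corresponding identity among ordered products of the $\widehat Q_{\alpha,k}$ inside the quantum $Q$-system cluster algebra, and then to track the powers of $q$ that appear from reordering.

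The key computation is therefore to rewrite the relation in Theorem~\ref{1.1},
\begin{equation*}
q^{\frac{1}{\delta}\Lambda_{\alpha\alpha}}\widehat{Q}_{\alpha,k+1}\widehat{Q}_{\alpha,k-1}=\widehat{Q}_{\alpha,k}^2-\normord{\prod_{\beta\sim\alpha}\prod_{i=0}^{|C_{\alpha\beta}|-1}\widehat{Q}_{\beta,\lfloor t_{\beta}(k+i)/t_{\alpha}\rfloor}},
\end{equation*}
in a form in which each of the three monomials is an \emph{ordered product} of the type $\widehat P_{\mathbf n}$ (with $\mathbf n$ the one-KR or two-KR vectors appearing in the statement), and to read off the resulting $q$-prefactors. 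On the left, $\widehat Q_{\alpha,k+1}\widehat Q_{\alpha,k-1}$ must be compared with the ordered monomial for $\KR_{\alpha,m+1}\star\KR_{\alpha,m-1}$; using the commutation relation $\widehat Q_{\alpha,i}\widehat Q_{\alpha,j}=q^{\frac{1}{\delta}(\Lambda_{\alpha\alpha}i-\Lambda_{\alpha\alpha}j)}\widehat Q_{\alpha,j}\widehat Q_{\alpha,i}$ one checks the power of $q$ needed to bring $\widehat Q_{\alpha,k+1}\widehat Q_{\alpha,k-1}$ into the prescribed order cancels exactly the $q^{\Lambda_{\alpha\alpha}/\delta}$ on the left-hand side (the point being $(k+1)-(k-1)=2$ against the normalization in the definition of $\normord{\,\cdot\,}$). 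The middle term $\widehat Q_{\alpha,k}^2$ is already an ordered monomial and maps to $\ch_q\KR_{\alpha,m}\star\KR_{\alpha,m}$ with no extra factor. The last term, the normal-ordered product over $\beta\sim\alpha$ and $0\le i<|C_{\alpha\beta}|$, is by construction the ordered monomial $\widehat P_{\mathbf n}$ attached to $K_{\alpha,m}$, so $\rho$ sends it to $\ch_q K_{\alpha,m}^\star$; the claimed prefactor $q^m$ then has to emerge from comparing the normalization constant $q^{-\frac12\sum_{i<j}C(y_i,y_j)}$ in $\normord{\,\cdot\,}$ with the normalization built into $\widehat P_{\mathbf n}$ in~\eqref{eq:1.9}, and I expect this bookkeeping — evaluating $\sum_{i<j}\frac{1}{\delta}(\Lambda_{\beta\beta'}\lfloor\cdot\rfloor-\Lambda_{\beta'\beta}\lfloor\cdot\rfloor)$ over the relevant index set and simplifying it to $m$ using $\Lambda_{\alpha\beta}t_\beta=\Lambda_{\beta\alpha}t_\alpha$ (equivalently the defining property $C_{\alpha\beta}t_\beta=C_{\beta\alpha}t_\alpha$) and $\sum_{\beta\sim\alpha}|C_{\alpha\beta}|t_\beta = 2 t_\alpha$-type identities — to be the technical heart of the proof.

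There is a second, more representation-theoretic way to package the argument, which I would use as a cross-check and to make the statement genuinely about modules rather than characters: combine the identity of graded multiplicity generating functions $\mathcal M_{\lambda,\mathbf n}(q^{-1})=M_{\lambda,\mathbf n}(q^{-1})$ (which is $\mathcal M=M$ after Theorem~\ref{1.2}, since $M=N$ and $\mathcal M=N$ at $q=1$ upgrades to the $q$-identity by positivity as explained in the text) with the short exact sequence of Chari--Venkatesh \cite{CV15} for the fusion product of KR-modules. That short exact sequence has the shape $0\to (\text{shifted } K_{\alpha,m}^\star)\to \KR_{\alpha,m+1}\star\KR_{\alpha,m-1}\to \KR_{\alpha,m}\star\KR_{\alpha,m}\to\cdots$ at the level of graded $\mathfrak g[t]$-modules; taking graded characters gives an \emph{inequality}/alternating identity, and one must check the grading shift on the submodule is exactly $q^m$. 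The consistency statement in the abstract — that the quantum $Q$-system relations are compatible with the Chari--Venkatesh sequence — is exactly the assertion that the $q^m$ coming from the cluster-algebra normalization matches the $q^m$ coming from the module grading shift, so the two derivations must agree; presenting both makes the result robust.

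The main obstacle, then, is not conceptual but combinatorial: pinning down that the $q$-power produced by the ordered-product normalization $\normord{\,\cdot\,}$ relative to the canonical ordering in $\widehat P_{\mathbf n}$ is precisely $q^m$ for every $\alpha$ and every type (each of $B_r, C_r, F_4, G_2$ must be verified, since $t_\alpha\in\{1,2\}$ or $\{1,3\}$ changes the floor functions $\lfloor t_\beta(m+i)/t_\alpha\rfloor$ and the range of $i$). I would organize this by splitting into the case $t_\alpha=1$ (where all floors are trivial and the count is essentially the simply-laced one of \cite{DFK14}) and the cases $t_\alpha>1$ with $\alpha$ long, carefully enumerating, for each neighbor $\beta$, the multiset $\{\lfloor t_\beta(m+i)/t_\alpha\rfloor : 0\le i<|C_{\alpha\beta}|\}$ and summing the pairwise $\Lambda$-weighted differences; the identity $\Lambda_{\alpha\beta}/t_\alpha = \Lambda_{\beta\alpha}/t_\beta$ together with $\sum_i \lfloor t_\beta(m+i)/t_\alpha\rfloor$ telescoping against the Cartan relation is what collapses everything to $m$.
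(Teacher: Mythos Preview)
Your overall strategy --- rewrite the quantum $Q$-system relation as a linear relation among the ordered monomials $\widehat P_{\mathbf n}$ and then pass to graded characters via the linear functional --- is exactly the paper's. The gap is in your identification of where the factor $q^m$ originates. You expect it to arise solely from the discrepancy between the normal-ordering $\normord{\cdot}$ and the product order in $\widehat P_{\mathbf n}$ of \eqref{eq:1.9}. But that discrepancy, computed in the paper as $\nu^{\sigma_{\alpha,m}}$ (see \eqref{eq:5.11}--\eqref{eq:5.12}), equals $0$ whenever $\alpha\neq\gamma'$ or $t_0\mid m$, and is in any case an $m$-independent constant; it cannot by itself produce $q^m$. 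If you apply a genuinely linear $\rho$ to the relation \eqref{eq:5.13} you obtain $\nu^{\Lambda_{\alpha\alpha}}\chi_{\widehat{\mathbf d}}=\chi_{\widehat{\mathbf s}}-\nu^{\delta+\sigma_{\alpha,m}}\chi_{\widehat{\mathbf k}}$, which is not the desired identity.

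The paper does \emph{not} invoke $\rho$ directly in the proof. It goes back to the explicit formula \eqref{eq:4.45} (extended in \eqref{eq:5.3}), in which the graded multiplicity is related to $\phi$ by an $\mathbf n$-dependent prefactor $q^{-\frac{1}{2\delta}(\widehat L(\widehat{\mathbf n})+2\widehat F(\widehat{\mathbf n})+\cdots)}$. The factor $q^m$ emerges entirely from comparing these prefactors across the three $\mathbf n$'s: Lemmas \ref{5.1}--\ref{5.3} give $\widehat F(\widehat{\mathbf d})=\widehat F(\widehat{\mathbf s})=\widehat F(\widehat{\mathbf k})+\delta$, $\widehat L(\widehat{\mathbf d})=\widehat L(\widehat{\mathbf s})-2\Lambda_{\alpha\alpha}$, and crucially $\widehat L(\widehat{\mathbf k})+2\sigma_{\alpha,m}=\widehat L(\widehat{\mathbf s})-2m\delta$. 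The type-by-type case analysis you anticipate is indeed required, but it lives in the proof of Lemma~\ref{5.3} (computing the quadratic form $\widehat L(\widehat{\mathbf k})$ via $\Lambda C=\delta I$), not in the reordering step.

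On your Chari--Venkatesh cross-check: as the paper notes in the remark following the proof, \cite{CV15} supplies the short exact sequence only when $m$ is a multiple of $t_\alpha$, so it confirms consistency in those cases but does not independently cover the general statement.
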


Here, we briefly remark that a stronger version of Theorem \ref{1.3} was proved by Chari and Venkatesh in \cite{CV15} for special cases. More precisely, they showed that there exists a short exact sequence of fusion product of KR-modules that extends the $Q$-system relations \eqref{eq:1.1} in certain cases, which we will explain in detail at the end of Section \ref{Section 5}.

The paper is organized as follows: we will first review some properties of the KR-modules over the current algebra and the Feigin-Loktev fusion product in Section \ref{Section 2}. We will then review the construction of the $Q$-system cluster algebras in Section \ref{Section 3}, and derive the quantum $Q$-system relations for all simple Lie algebras. In Section \ref{Section 4}, we will define the restricted versions of the $M$- and $N$-sums $M_{\lambda,\mathbf{n}}(q^{-1})$ and $N_{\lambda,\mathbf{n}}(q^{-1})$, and introduce quantum generating functions in the non-simply laced case that specializes to the $N$-sum. We will then prove factorization properties of these generating functions analogous to those in \cite{DFK08, DFK14}, and use these factorization properties, along with the Laurent polynomiality property of the solutions of the quantum $Q$-system, to prove the identity $M_{\lambda,\mathbf{n}}(q^{-1})=N_{\lambda,\mathbf{n}}(q^{-1})$ in the non-simply laced case. As an immediate consequence, we will use the identity to prove Theorem \ref{1.3} in Section \ref{Section 5}. 

\subsection*{Acknowledgements}

The author would like to thank Rinat Kedem for introducing the problem to the author, and for her guidance throughout this project. The author would also like to thank Phillipe Di Francesco for his helpful discussions and clarifications, and Travis Scrimshaw for his helpful clarifications on the references. Finally, the author would like to thank the referee for his careful reading of the manuscript and helpful remarks.
 The author is supported by graduate fellowship from A*STAR (Agency for Science, Technology and Research, Singapore), and this work is supported in part by the US National Science Foundation (DMS-1802044).
\section{Graded tensor products of cyclic modules over the current algebra}\label{Section 2}

\subsection{Preliminaries}

Let $\mathfrak{g}$ be a simple complex Lie algebra of rank $r$, and let $I_r=\{1,\ldots,r\}$ be labels of the simple roots of $\mathfrak{g}$. Let $C$ denote the Cartan matrix of $\mathfrak{g}$, $t_{\alpha}$ ($\alpha\in I_r$) be the integers that satisfy $t_{\beta}C_{\alpha\beta}=t_{\alpha}C_{\beta\alpha}$ for all $\alpha,\beta\in I_r$, and $t_0=\max_{\alpha\in I_r}t_{\alpha}$. Then we have 
\begin{itemize}
\item $\Pi_{\circ}=\{i\in I_r\mid i\text{ corresponds to a long root of }\mathfrak{g}\}$;
\item $\Pi_{\bullet}=\{i\in I_r\mid i\text{ corresponds to a short root of }\mathfrak{g}\}$.
\end{itemize}
The following table lists all simple Lie algebras $\mathfrak{g}$ and their corresponding $t_0$, $\Pi_{\circ}$ and $\Pi_{\bullet}$:
\begin{center}
  \begin{tabular}{|c|c|c|c|}
  \hline
  $\mathfrak{g}$ & $t_0$ & $\Pi_{\circ}$ & $\Pi_{\bullet}$ \\ \hline
  $A_r, D_r (r\geq 4), E_6, E_7, E_8$ & $1$ & $I_r$ & $\varnothing$ \\ \hline
  $B_r (r\geq 2)$ & $2$ & $\{1,\ldots,r-1\}$ & $\{r\}$ \\ \hline
  $C_r (r\geq 3)$ & $2$ & $\{r\}$ & $\{1,\ldots,r-1\}$ \\ \hline
  $F_4$ & $2$ & $\{1,2\}$ & $\{3,4\}$ \\ \hline
  $G_2$ & $3$ & $\{1\}$ & $\{2\}$ \\ 
  \hline
  \end{tabular}
\end{center}
We note that we have $I_r=\Pi_{\circ}\sqcup\Pi_{\bullet}$, $t_{\alpha}=1$ and $t_{\beta}=t_0$ for all $\alpha\in\Pi_{\circ}$ and $\beta\in\Pi_{\bullet}$. 

In additition, we will also let $\gamma=r,r-1,2,1$ for $\mathfrak{g}=B_r,C_r,F_4$ and $G_2$ respectively, and $\gamma'=r-1,r,3,2$ for $\mathfrak{g}=B_r,C_r,F_4$ and $G_2$ respectively.

\subsection{\texorpdfstring{$\KR$}{KR}-modules}

While the KR-modules over the Yangian or the quantum affine algebra are defined in terms of their Drinfeld polynomials, the KR-modules over the current algebra $\mathfrak{g}[t]=\mathfrak{g}\otimes\mathbb{C}[t]$ are defined in terms of generators and relations, and are the classical limits of the KR-modules over the quantum affine algebra \cite{CM06, Kedem11}. These modules are parameterized by a non-zero $z\in\mathbb{C}^*$ (which we call the localization parameter), $\alpha\in I_r$, and $m\in\mathbb{Z}_+$, and the corresponding KR-module over $\mathfrak{g}[t]$ is denoted $\KR_{\alpha,m}(z)$. 

When $\mathfrak{g}$ is of type $A$, we have $\KR_{\alpha,m}(z)\cong V(m\omega_{\alpha})$ as $\mathfrak{g}$-modules, so KR-modules over $\mathfrak{sl}_{r+1}[t]$ are irreducible as $\mathfrak{sl}_{r+1}$-modules. In general, KR-modules over $\mathfrak{g}[t]$ need not be irreducible as $\mathfrak{g}$-modules. However, the direct sum decomposition of $\KR_{\alpha,m}(z)$ into irreducible $\mathfrak{g}$-modules has the following form \cite{Chari01}:
\begin{equation*}
\KR_{\alpha,m}(z)\cong V(m\omega_{\alpha})\oplus\left(\bigoplus_{\mu\prec m\omega_{\alpha}}V(\mu)^{\oplus m_{\mu}}\right),
\end{equation*}
where $\prec$ is the usual dominance partial ordering on $P$. This decomposition immediately implies that under the restriction of the action to $\mathfrak{g}$, $\KR_{\alpha,m}(z)$ has a highest weight component isomorphic to $V(m\omega_{\alpha})$.

\subsection{The Feigin-Loktev fusion product}

Let $V_1,\ldots,V_N$ be $\mathfrak{g}[t]$-modules with cyclic vectors $v_1,\ldots,v_N$ that generate $V_1,\ldots V_N$ as $\mathfrak{g}[t]$-modules respectively, and let $z_1,\ldots,z_N\in\mathbb{C}$ be pairwise distinct non-zero localization parameters. It was shown by Feigin and Loktev in \cite{FL99} the tensor product $V_1(z_1)\otimes\cdots\otimes V_N(z_N)$ is generated by action of $\mathfrak{g}[t]$ the tensor product $v_1\otimes\cdots\otimes v_N$ of cyclic vectors $v_1,\ldots,v_N$. Hence, they were able to introduce a $\mathfrak{g}$-equivariant grading on the tensor product $V_1(z_1)\otimes\cdots\otimes V_N(z_N)$. This tensor product of localized $\mathfrak{g}[t]$-modules, along with the $\mathfrak{g}$-equivariant grading, is called the Feigin-Loktev fusion product \cite{FL99}. By definition, the fusion product is commutative.

In general, the fusion product of cyclic $\mathfrak{g}[t]$-modules depends on the choice of the localization parameters. However, it was conjectured by Feigin and Loktev in \cite{FL99} and proved in \cite{AK07, DFK08}, that the fusion product is independent of the localization parameters when the constituent modules involved are of Kirillov-Reshetikhin type. Hence, we can suppress the localization parameters, and parameterize the fusion product of KR-modules by a vector $\mathbf{n}=(n_{\alpha,i})_{\alpha\in I_r,i\in\mathbb{N}}$, where $n_{\alpha,i}$ is the number of KR-modules of type $\KR_{\alpha,i}$. 
\section{Quantum \texorpdfstring{$Q$}{\textit{Q}}-systems}\label{Section 3}

\subsection{\texorpdfstring{$Q$}{\textit{Q}}-systems and cluster algebras}

Let $I$ be a subset of $\{Q_{\alpha,k}\mid \alpha\in I_r,k\in\mathbb{Z}\}$ with $|I|=2r$. We say that $I$ forms valid set of initial data for the $Q$-system \eqref{eq:1.1} if any solution of the $Q$-system \eqref{eq:1.1} could be expressed as a function of the elements in $I$. An important example of an initial data for the $Q$-system is the components of the following vector 
\begin{equation*}
\mathbf{y}_{\vec{0}}=(Q_{\alpha,0},Q_{\alpha,1})_{\alpha\in I_r}.
\end{equation*} 
More generally, we let $\vec{s}_k=(kt_{\alpha})_{\alpha\in I_r}$ for all $k\in\mathbb{Z}$. Then the components of the following vector 
\begin{equation}\label{eq:3.1}
\mathbf{y}_{\vec{s}_k}=(Q_{\alpha,kt_{\alpha}},Q_{\alpha,kt_{\alpha}+1})_{\alpha\in I_r}
\end{equation} 
form a valid set of initial data for the $Q$-system. We call $\mathbf{y}_{\vec{s}_k}$ the $k$-th fundamental initial data for the $Q$-system.

\begin{definition}\label{3.1}
Let $\vec{m}=(m_{\alpha})_{\alpha\in I_r}$ be a vector with integer components. We say that $\vec{m}$ is a generalized Motzkin path if 
\begin{equation}\label{eq:3.2}
-\min(t_{\alpha},t_{\beta})\leq t_{\alpha}m_{\beta}-t_{\beta}m_{\alpha}\leq\max(t_{\alpha},t_{\beta}) 
\end{equation}
whenever $C_{\alpha\beta}=-1$. 
\end{definition}

\begin{example}\label{3.2}
An important example of a generalized Motzkin path is the vector $\vec{s}_k=(kt_{\alpha})_{\alpha\in I_r}$. 
\end{example}

\begin{example}\label{3.3}
Let $\mathfrak{g}$ be of type $B_3$. Then it is easy to verify that the vector $(-1,0,2)$ is a generalized Motzkin path.
\end{example}

In general, a valid set of initial data for the $Q$-system is determined by a generalized Motzkin path.

It was shown by Di Francesco and Kedem in \cite{DFK09} that the $Q$-system relations could be realized as cluster algebra mutations. While the cluster algebra mutation relations are written without any subtractions \cite{FZ02}, the RHS of the $Q$-system relations \eqref{eq:1.1} is written with a subtraction. To avoid the use of cluster algebras with coefficients (as in the Appendix of \cite{DFK09}), we would need to normalize our $Q$-system relations accordingly. Following \cite[Lemma 2.1]{DFK09}, we let $\mu_{\alpha}=\sum_{\beta\in I_r}(C^{-1})_{\beta,\alpha}$, $\epsilon_{\alpha}=e^{i\pi\mu_{\alpha}}$ and $R_{\alpha,k}=\epsilon_{\alpha}Q_{\alpha,k}$ for all $\alpha\in I_r$ and $k\in\mathbb{Z}$. Then it follows that the normalized variables $R_{\alpha,k}$ satisfy the following relation: 
\begin{equation}\label{eq:3.3}
R_{\alpha,k+1}R_{\alpha,k-1}=R_{\alpha,k}^2+\prod_{\beta\sim\alpha}\prod_{i=0}^{|C_{\alpha\beta}|-1}R_{\beta,\lfloor t_{\beta}(k+i)/t_{\alpha}\rfloor},
\end{equation}
We will thereby refer to \eqref{eq:3.3} as the normalized $Q$-system for $\mathfrak{g}$.

The cluster algebra associated to the normalized $Q$-system \eqref{eq:3.3} for $\mathfrak{g}$ is defined from the initial cluster $(\mathbf{x}[0],B)$, where $\mathbf{x}[0]=(R_{1,0},R_{2,0},\ldots,R_{r,0},R_{1,1},\ldots,R_{r,1})$, and
\begin{equation}\label{eq:3.4}
B=
\begin{pmatrix}
C^t-C & -C^t\\
C & 0
\end{pmatrix}.
\end{equation}
In fact, more is true:

\begin{theorem}[Theorems 3.1, 3.4, 3.12, and Lemma 3.10, \cite{DFK09}]\label{3.4}
There exists a cluster graph $\mathcal{G}_{\mathfrak{g}}$, which includes all nodes $(\mathbf{x}[k],B)$ and $(\mathbf{x}[k'],-B)$ labeled by $k\in\mathbb{Z}_+$, with the clusters $\mathbf{x}[k]$ and $\mathbf{x}[k']$ defined as follows:
\begin{align*}
\mathbf{x}[k]&=(R_{1,2t_1k},R_{2,2t_2k},\ldots,R_{r,2t_rk},R_{1,2t_1k+1},\ldots,R_{r,2t_rk+1}),\\
\mathbf{x}[k']&=(R_{1,2t_1k},R_{2,2t_2k},\ldots,R_{r,2t_rk},R_{1,2t_1k-1},\ldots,R_{r,2t_rk-1})
\end{align*}
and mutation matrix $B$ defined as in \eqref{eq:3.4}, such that all cluster algebra mutations in the graph $\mathcal{G}_{\mathfrak{g}}$ are normalized $Q$-system relations \eqref{eq:3.3}.  
\end{theorem}

\subsection{Quantum \texorpdfstring{$Q$}{\textit{Q}}-systems} 

The quantum $Q$-system relations were first defined for type $A$ in \cite{DFK11}, and subsequently for all simply-laced $\mathfrak{g}$ in \cite{DFK14}. Our goal in this subsection is to do the same for all non-simply laced $\mathfrak{g}$. In order to define these quantum $Q$-system relations, we would first need to recall the definition of a quantum cluster algebra \cite{BZ05}.

Let us fix a positive integer $m$ (the rank of the quantum cluster algebra), and a non-singular skew-symmetric $m\times m$ matrix integer $\widetilde{B}$. Let $\widetilde{\Lambda}$ be a $m\times m$ integer matrix that satisfies the following compatibility relation with the exchange matrix $B$ for some positive integer $\delta$:
\begin{equation*}
\widetilde{B}\widetilde{\Lambda}=-\delta I,
\end{equation*}
and let $\mathcal{T}$ be the $\mathbb{Z}[\nu^{\pm1/2}]$-algebra with generators $X_1,X_2,\ldots,X_m$ and relations
\begin{equation}\label{eq:3.5}
X_iX_j=\nu^{\widetilde{\Lambda}_{ij}}X_jX_i,\quad i,j=1,\ldots,m.
\end{equation}
In addition, we will also let $\mathcal{F}$ be the skew-field of fractions of $\mathcal{T}$. 

Next, for any pairwise $\nu$-commuting elements $y_1,y_2,\ldots,y_k\in\mathcal{F}$ with 
\begin{equation}\label{eq:3.6}
y_iy_j=\nu^{C(y_i,y_j)}y_jy_i 
\end{equation}
for any $i,j\in\{1,\ldots,k\}$, we define the ordered product $\normord{y_1y_2\cdots y_k}$ as follows:
\begin{equation}\label{eq:3.7}
\normord{y_1y_2\cdots y_k}=\nu^{\frac{1}{2}\sum_{i>j}C(y_i,y_j)}y_1y_2\cdots y_k.
\end{equation}
Then it is clear that the ordered product $\normord{\cdot}$ is associative and commutative, so we may write $\normord{\prod_{i=1}^k y_i}$ or $\normord{\prod_{i\in[1,k]}y_i}$ in lieu of $\normord{y_1y_2\cdots y_k}$.

We are now ready to define the quantum analogue of mutations. The quantum mutations $\mu_i$ act in the same way on the exchange matrix $\widetilde{B}$ as in the classical case. We write $\mu_i(X_j)=X_j$ if $j\neq i$, and 
\begin{equation}\label{eq:3.8}
\mu_i(X_i)=\normord{\prod_{j=1}^m X_j^{-\delta_{ji}+[\widetilde{B}_{ji}]_+}}+\normord{\prod_{j=1}^m X_j^{-\delta_{ji}+[-\widetilde{B}_{ji}]_+}},
\end{equation}
where $[n]_+=\max(n,0)$.

Now, we would like to derive the normalized quantum $Q$-system relations, by deriving the quantum cluster algebra mutations in the quantum $Q$-system cluster algebra associated to the exchange matrix $B$ defined in equation \eqref{eq:3.4} (that is, $\widetilde{B}=B)$, that correspond to the normalized $Q$-system relations \eqref{eq:3.3}. To this end, we let 
\begin{equation}\label{eq:3.9}
\Lambda=\delta C^{-1},\quad\delta=\det(C),
\end{equation}
The skew-symmetric matrix $\widetilde{\Lambda}$ that gives rise to the commutation relations \ref{eq:3.5} between the quantum cluster variables in the initial cluster $\mathbf{X}[0]=(\widehat{R}_{1,0},\widehat{R}_{2,0},\ldots,\widehat{R}_{r,0},\widehat{R}_{1,1},\ldots,\widehat{R}_{r,1})$ is then given by
\begin{equation}\label{eq:3.10}
\widetilde{\Lambda}=
-\delta B^{-1}
\left(
\begin{array}{c|c}
0 & -\Lambda\\
\hline
\Lambda^t & \Lambda^t-\Lambda
\end{array}
\right).
\end{equation}
In particular, the quantum cluster variables in $\mathbf{X}[0]$ satisfy the following commutation relations:
\begin{equation}\label{eq:3.11}
\widehat{R}_{\alpha,i}\widehat{R}_{\beta,j}=\nu^{\Lambda_{\beta\alpha}i-\Lambda_{\alpha\beta}j}\widehat{R}_{\beta,j}\widehat{R}_{\alpha,i},\quad\alpha,\beta\in I_r,i,j\in\{0,1\}.
\end{equation}
In addition, we will also let $\widehat{R}_{\alpha,i}$ denote the quantum cluster variable in the quantum cluster algebra associated to $B$ that corresponds to $R_{\alpha,i}$ for all $\alpha\in I_r$ and $i\in\mathbb{Z}$.

We first observe that each generalized Motzkin path corresponds to a cluster in the cluster algebra corresponding to the normalized $Q$-system. In particular, it follows from inequality \eqref{eq:3.2} that if 
\begin{equation}\label{eq:3.12}
|t_{\alpha}j-t_{\beta}i|\leq t_{\alpha}+t_{\beta}-\min(t_{\alpha},t_{\beta})\delta_{\alpha\beta}, 
\end{equation}
then the variables $\widehat{R}_{\alpha,i}$ and $\widehat{R}_{\beta,j}$ are in the same quantum cluster, and hence they $\nu$-commute. Therefore, the quantum cluster algebra mutations corresponding to \eqref{eq:3.3} has the following form:
\begin{align}
\widehat{R}_{\alpha,k+1}
&=\normord{\widehat{R}_{\alpha,k-1}^{-1}\widehat{R}_{\alpha,k}^2}+\normord{\widehat{R}_{\alpha,k-1}^{-1}\prod_{\beta\sim\alpha}\prod_{i=0}^{|C_{\alpha\beta}|-1}\widehat{R}_{\beta,\lfloor t_{\beta}(k+i)/t_{\alpha}\rfloor}},\label{eq:3.13}\\
\widehat{R}_{\alpha,k-1}
&=\normord{\widehat{R}_{\alpha,k+1}^{-1}\widehat{R}_{\alpha,k}^2}+\normord{\widehat{R}_{\alpha,k+1}^{-1}\prod_{\beta\sim\alpha}\prod_{i=0}^{|C_{\alpha\beta}|-1}\widehat{R}_{\beta,\lfloor t_{\beta}(k+i)/t_{\alpha}\rfloor}}.\label{eq:3.14}
\end{align}
Using the fact that $B\widetilde{\Lambda}=-\delta I$, it follows from equation \eqref{eq:3.13} that the normalized quantum $Q$-system relations for $\mathfrak{g}$ are given by 
\begin{equation}\label{eq:3.15}
\nu^{-\Lambda_{\alpha\alpha}}\widehat{R}_{\alpha,k+1}\widehat{R}_{\alpha,k-1}=\widehat{R}_{\alpha,k}^2+\nu^{-\frac{\delta}{2}}\normord{\prod_{\beta\sim\alpha}\prod_{i=0}^{|C_{\alpha\beta}|-1}\widehat{R}_{\beta,\lfloor t_{\beta}(k+i)/t_{\alpha}\rfloor}}.
\end{equation}
In addition to the normalized quantum $Q$-system relations for $\mathfrak{g}$ \eqref{eq:3.15}, we would also need to know the relevant commutation relations that are satisfied by the normalized quantum $Q$-system variables $\widehat{R}_{\alpha,i}$:

\begin{lemma}\label{3.5}
Let $\vec{m}$ is a generalized Motzkin path \eqref{eq:3.2}. Then within each valid initial data set $(\widehat{R}_{\alpha,m_{\alpha}},\widehat{R}_{\alpha,m_{\alpha}+1})_{\alpha\in I_r}$, the normalized quantum $Q$-system variables $\widehat{R}_{\alpha,i}$ satisfy the following commutation relations:
\begin{equation}\label{eq:3.16}
\widehat{R}_{\alpha,i}\widehat{R}_{\beta,j}=\nu^{\Lambda_{\beta\alpha}i-\Lambda_{\alpha\beta}j}\widehat{R}_{\beta,j}\widehat{R}_{\alpha,i}.
\end{equation}
\end{lemma}

\begin{proof}
The following proof of \eqref{eq:3.16} uses the same strategy as in the proof of \cite[Lemma 3.2]{DFK09}, where the commutation relations \eqref{eq:3.16} are proved in the simply laced case. We will first show that \eqref{eq:3.16} holds in the case where 
\begin{equation}\label{eq:3.17}
|t_{\alpha}j-t_{\beta}i|\leq\max(t_{\alpha},t_{\beta}).
\end{equation}
Firstly, it follows from Theorem \ref{3.4} and equation \eqref{eq:3.10} that \eqref{eq:3.16} holds whenever we have $\alpha,\beta\in\Pi_{\circ}$ and $|i-j|\leq 1$, and in the case where $\alpha\in\Pi_{\circ}$, $\beta\in\Pi_{\bullet}$, we have the following commutation relations for all $k\in\mathbb{Z}$, $i=0,\pm1$ and $j=\pm(t_0-1),\pm t_0$:
\begin{align}
\widehat{R}_{\alpha,2k}\widehat{R}_{\beta,2t_0k+i}&=\nu^{-i\Lambda_{\alpha\beta}}\widehat{R}_{\beta,2t_0k+i}\widehat{R}_{\alpha,2k},\quad\text{and}\label{eq:3.18}\\
\widehat{R}_{\alpha,2k+1}\widehat{R}_{\beta,t_0(2k+1)+j}&=\nu^{-j\Lambda_{\alpha\beta}}\widehat{R}_{\beta,t_0(2k+1)+j}\widehat{R}_{\alpha,2k+1}.\label{eq:3.19}
\end{align}
Let 
\begin{equation*}
\widehat{\mathscr{T}}_{\alpha,k}=\widehat{R}_{\alpha,k}^{-2}\normord{\prod_{\beta\sim\alpha}\prod_{i=0}^{|C_{\alpha\beta}|-1}\widehat{R}_{\beta,\lfloor t_{\beta}(k+i)/t_{\alpha}\rfloor}}
\end{equation*}
and
\begin{equation*}
c_{\alpha,k}=C\left(\widehat{R}_{\alpha,k},\widehat{R}_{\alpha,k-1}^{-1}\right)-\frac{1}{2}\sum_{\beta\sim\alpha}\sum_{i=0}^{|C_{\alpha\beta}|-1}C\left(\widehat{R}_{\beta,\lfloor t_{\beta}(k+i)/t_{\alpha}\rfloor},\widehat{R}_{\alpha,k-1}^{-1}\right)
\end{equation*}
for all $\alpha\in I_r$ and $k\in\mathbb{Z}$. Then it follows that equation \eqref{eq:3.13} could be rewritten as
\begin{equation*}
\widehat{R}_{\alpha,k+1}
=\nu^{-C\left(\widehat{R}_{\alpha,k},\widehat{R}_{\alpha,k-1}^{-1}\right)}\widehat{R}_{\alpha,k}^2(1+\nu^{c_{\alpha,k}}\widehat{\mathscr{T}}_{\alpha,k})\widehat{R}_{\alpha,k-1}^{-1}.
\end{equation*}
Thus, in order to show that the commutation relation \eqref{eq:3.18} holds for $i=2$, it suffices to show that $\widehat{R}_{\alpha,2k}$ commutes with $\widehat{\mathscr{T}}_{\beta,2t_0k+1}$. Using the fact that $C(\widehat{R}_{\alpha,2k},\widehat{R}_{\omega,2k+i})=-i\Lambda_{\alpha\omega}$ for all $\omega\in\Pi_{\circ}$ and $i\in\{0,1\}$, and $C(\widehat{R}_{\alpha,2k},\widehat{R}_{\omega,2t_0k+1})=-\Lambda_{\alpha\omega}$ for all $\omega\in\Pi_{\bullet}$, it follows that we have 
\begin{equation*}
C\left(\widehat{R}_{\alpha,2k},\widehat{R}_{\omega,\lfloor t_{\omega}(2t_0k+1)/t_{\beta}\rfloor}\cdots\widehat{R}_{\omega,\lfloor t_{\omega}(2t_0k+|C_{\beta\omega}|)/t_{\beta}\rfloor}\right)=-\Lambda_{\alpha\omega}=\Lambda_{\alpha\omega}C_{\omega\beta}
\end{equation*}
for all $\omega\sim\beta$. As we have $C\left(\widehat{R}_{\alpha,2k},\widehat{R}_{\beta,2t_0k+1}^{-2}\right)=2\Lambda_{\alpha\beta}=\Lambda_{\alpha\beta}C_{\beta\beta}$, it follows that we have
\begin{equation*}
C\left(\widehat{R}_{\alpha,2k},\widehat{\mathscr{T}}_{\beta,2t_0k+1}\right)=\Lambda_{\alpha\beta}C_{\beta\beta}+\sum_{\omega\sim\beta}\Lambda_{\alpha\omega}C_{\omega\beta}=\sum_{\omega\in I_r}\Lambda_{\alpha\omega}C_{\omega\beta}=0,
\end{equation*}
where the last equality follows from the fact that $\Lambda C=\delta I$. So this shows that $\widehat{R}_{\alpha,2k}$ commutes with $\widehat{\mathscr{T}}_{\beta,2t_0k+1}$, and hence \eqref{eq:3.18} holds for $i=2$. By repeating the argument as before (and using equation \eqref{eq:3.14} where necessary), it follows that the commutation relations \eqref{eq:3.18} and \eqref{eq:3.19} hold for all $|i|,|j|\leq t_0$. Consequently, the commutation relations \eqref{eq:3.16} hold whenever inequality \eqref{eq:3.17} is satisfied, in the case where either $\alpha\in\Pi_{\circ}$, $\beta\in\Pi_{\bullet}$ or $\alpha\in\Pi_{\bullet}$, $\beta\in\Pi_{\circ}$. The same argument above would also imply that the commutation relations \eqref{eq:3.16} hold whenever inequality \eqref{eq:3.17} is satisfied, in the case where $\alpha,\beta\in\Pi_{\bullet}$. 

Finally, to obtain the rest of the commutation relations, we will proceed by induction on either $|i-j|$ (in the case where either $\alpha,\beta\in\Pi_{\circ}$ or $\alpha,\beta\in\Pi_{\bullet}$), or $|t_{\alpha}j-t_{\beta}i|$ (in the case where either $\alpha\in\Pi_{\circ}$, $\beta\in\Pi_{\bullet}$ or $\alpha\in\Pi_{\bullet}$, $\beta\in\Pi_{\circ}$), and repeat the same argument as before, with the base case(s) following from the fact that the commutation relations \eqref{eq:3.16} hold whenever inequality \eqref{eq:3.17} is satisfied. 
\end{proof}

Having obtained the normalized quantum $Q$-system relations for $\mathfrak{g}$, we will now proceed to renormalize \eqref{eq:3.15} to obtain the quantum $Q$-system relations for $\mathfrak{g}$. We let $a_{\alpha}=\frac{1}{2}\sum_{\beta\in I_r}\Lambda_{\alpha\beta}$ and $\widehat{Q}_{\alpha,k}=\epsilon_{\alpha}^{-1}\nu^{a_{\alpha}}\widehat{R}_{\alpha,k}$ for all $\alpha\in I_r$ and $k\in\mathbb{Z}$. Then it follows from \eqref{eq:3.15} that we have 
\begin{equation}\label{eq:3.20}
\nu^{-\Lambda_{\alpha\alpha}}\widehat{Q}_{\alpha,k+1}\widehat{Q}_{\alpha,k-1}=\widehat{Q}_{\alpha,k}^2-\normord{\prod_{\beta\sim\alpha}\prod_{i=0}^{|C_{\alpha\beta}|-1}\widehat{Q}_{\beta,\lfloor t_{\beta}(k+i)/t_{\alpha}\rfloor}}
\end{equation}
for all $\alpha\in I_r$ and $k\in\mathbb{Z}$. We will thereby refer to \eqref{eq:3.20} as the quantum $Q$-system for $\mathfrak{g}$.

\begin{remark}
The quantum $Q$-system variables $\widehat{Q}_{\alpha,k}$ also satisfy the same commutation relations \eqref{eq:3.16} (with the $\widehat{R}_{\alpha,k}$'s replaced by $\widehat{Q}_{\alpha,k}$'s). 
\end{remark}

For latter convenience, we will let 
\begin{equation}\label{eq:3.21}
\widehat{Y}_{\alpha,k}=\widehat{Q}_{\alpha,k}^{-2}\normord{\prod_{\beta\sim\alpha}\prod_{i=0}^{|C_{\alpha\beta}|-1}\widehat{Q}_{\beta,\lfloor t_{\beta}(k+i)/t_{\alpha}\rfloor}} 
\end{equation}
for all $\alpha\in I_r$ and $k\in\mathbb{Z}$. Then the quantum $Q$-system relations \eqref{eq:3.20} is equivalent to 
\begin{equation}\label{eq:3.22}
\nu^{-\Lambda_{\alpha\alpha}}\widehat{Q}_{\alpha,k+1}\widehat{Q}_{\alpha,k-1}=\widehat{Q}_{\alpha,k}^2(1-\widehat{Y}_{\alpha,k}).
\end{equation}

We will conclude this section with a technical lemma, which will come in handy in the factorization of the quantum generating functions that we will introduce in the next section:

\begin{lemma}\label{3.6}
Let $\widehat{Z}_{\alpha,k}=\widehat{Q}_{\alpha,k}\widehat{Q}_{\alpha,k+1}^{-1}$ for all $\alpha\in I_r$ and $k\in\mathbb{Z}$. Then for all distinct $\alpha,\beta\in I_r$, $i\in\mathbb{Z}$ and $|p|\leq t_0$, we have:
\begin{enumerate}
\item $\widehat{Z}_{\beta,i}$ commutes with $\widehat{Z}_{\alpha,i}$ and $\widehat{Z}_{\alpha,i-1}$ whenever $\alpha,\beta\in\Pi_{\circ}$ or $\alpha,\beta\in\Pi_{\bullet}$,
\item $\widehat{Z}_{\beta,i}$ commutes with $\widehat{Z}_{\alpha,t_0i+p}$ whenever $\alpha\in\Pi_{\bullet}$ and $\beta\in\Pi_{\circ}$,
\item $\widehat{Q}_{\beta,i+2}$ commutes with $\widehat{Z}_{\alpha,i}^{-1}\widehat{Z}_{\alpha,i+1}$ whenever $\alpha,\beta\in\Pi_{\circ}$ or $\alpha,\beta\in\Pi_{\bullet}$,
\item $\widehat{Q}_{\beta,t_0i+1}$ commutes with $\widehat{Z}_{\alpha,i-1}^{-1}\widehat{Z}_{\alpha,i}$ whenever $\alpha\in\Pi_{\circ}$ and $\beta\in\Pi_{\bullet}$,
\item $\widehat{Q}_{\beta,i}$ commutes with $\widehat{Z}_{\alpha,t_0i-2}^{-1}\widehat{Z}_{\alpha,t_0i-1}$ and $\widehat{Z}_{\alpha,t_0i}^{-1}\widehat{Z}_{\alpha,t_0i+1}$ whenever $\alpha\in\Pi_{\bullet}$ and $\beta\in\Pi_{\circ}$,
\item $\widehat{Q}_{\beta,i}$ commutes with $\widehat{Y}_{\alpha,i+1}$ whenever $\alpha,\beta\in\Pi_{\circ}$ or $\alpha,\beta\in\Pi_{\bullet}$,
\item $\widehat{Q}_{\beta,t_0i-1}$ commutes with $\widehat{Y}_{\alpha,i}$ whenever $\alpha\in\Pi_{\circ}$ and $\beta\in\Pi_{\bullet}$,
\item $\widehat{Q}_{\beta,i}$ commutes with $\widehat{Y}_{\alpha,t_0i+p}$ whenever $\alpha\in\Pi_{\bullet}$ and $\beta\in\Pi_{\circ}$.
\end{enumerate}
\end{lemma}

We shall omit the proof of Lemma \ref{3.6}, as all statements follow readily from Lemma \ref{3.5} (and using inequality \eqref{eq:3.12}) where necessary.

Similar to the classical case, the cluster variables in any quantum cluster algebra satisfy a Laurent property, that is, given an initial cluster $\mathbf{X}$ of a quantum cluster algebra, we can express any cluster variable as a (non-commutative) Laurent polynomial in the variables of $\mathbf{X}$. As the quantum $Q$-system relations are obtained from the quantum $Q$-system cluster algebras, it follows that the solutions of the quantum $Q$-system inherit this Laurent property as well:

\begin{lemma}\label{3.7}
For any generalized Motzkin path $\vec{m}$, $\alpha\in I_r$ and $i\in\mathbb{Z}$, $\widehat{Q}_{\alpha,i}$ could be expressed as a (non-commutative) Laurent polynomial in the initial data $\mathbf{Y}_{\vec{m}}=(\widehat{Q}_{\alpha,m_{\alpha}},\widehat{Q}_{\alpha,m_{\alpha}+1})_{\alpha\in I_r}$ with coefficients in $\mathbb{Z}[\nu^{\pm1/2}]$.
\end{lemma} 
\section{The graded tensor product multiplicities and the quantum \texorpdfstring{$Q$}{\textit{Q}}-system: the non-simply laced case}\label{Section 4}

Our goal in this section is to show that Theorem \ref{1.2} holds. As in \cite{DFK08, DFK14}, we will prove a slightly stronger statement, where we fix a positive integer $k$, and restrict the sums $M_{\lambda,\mathbf{n}}(q^{-1})$ and $N_{\lambda,\mathbf{n}}(q^{-1})$ to the $\mathbf{m}$'s that satisfy $m_{\alpha,i}=0$ for all $\alpha\in I_r$ and $i>t_{\alpha}k$, and show that these restricted sums are equal to each other. The equality $M_{\lambda,\mathbf{n}}(q^{-1})=N_{\lambda,\mathbf{n}}(q^{-1})$ would subsequently follow when $k$ is sufficiently large.

\subsection{The restricted \texorpdfstring{$M$}{\textit{M}}- and \texorpdfstring{$N$}{\textit{N}}-sums}

Throughout this section, we will assume that $\mathfrak{g}$ is non-simply laced unless otherwise stated. Let us fix a vector $\mathbf{n}=(n_{\alpha,i})_{\alpha\in I_r,i\in\mathbb{N}}$ of nonnegative integers that parameterizes a finite set of KR-modules over $\mathfrak{g}[t]$, and a dominant integral weight $\lambda=\sum_{\alpha\in I_r}\ell_{\alpha}\omega_{\alpha}$ of $\mathfrak{g}$, and a positive integer $k$. For latter convenience, we will let $\ell$ denote the vector $(\ell_{\alpha})_{\alpha\in I_r}$, and $\mathbf{n}^{(j)}$ denote the vector $(n_{\alpha,i})_{\alpha\in I_r,i>t_{\alpha}j}$. Also, for any $j,p\in\mathbb{Z}_+$ satisfying $0\leq j\leq k$ and $p<t_0$, we let $J_{\mathfrak{g}}^{(j,p)}$ be the following indexing set:
\begin{equation*}
J_{\mathfrak{g}}^{(j,p)}=\{(\alpha,i)\mid\alpha\in I_r,j+1\leq i\leq k\,(\alpha\in\Pi_{\circ}),t_0j+p+1\leq i\leq t_0k\, (\alpha\in\Pi_{\bullet})\}.
\end{equation*}
For any vector $\mathbf{m}=(m_{\alpha,i})_{(\alpha,i)\in J_{\mathfrak{g}}^{(0,0)}}$ of nonnegative integers and any $\alpha\in I_r$, we define the restricted total spin $q_{\alpha,0}$ as follows:
\begin{equation}\label{eq:4.1}
q_{\alpha,0}=\ell_{\alpha}+\sum_{(\beta,j)\in J_{\mathfrak{g}}^{(0,0)}}jC_{\alpha\beta}m_{\beta,j}-\sum_{j=1}^{t_{\alpha}k}jn_{\alpha,j}.
\end{equation}
Next, for any $(\alpha,i)\in J_{\mathfrak{g}}^{(0,0)}$, we define the restricted vacancy numbers $p_{\alpha,i}$ and the restricted quadratic form $Q_k(\mathbf{m},\mathbf{n})$ as follows:
\begin{align}
p_{\alpha,i}
&=\sum_{j=1}^{t_{\alpha}k}\min(i,j)n_{\alpha,j}-\sum_{(\beta,j)\in J_{\mathfrak{g}}^{(0,0)}}\frac{C_{\alpha\beta}}{t_{\alpha}}\min(t_{\alpha}j,t_{\beta}i)m_{\beta,j},\label{eq:4.2}\\
Q_k(\mathbf{m},\mathbf{n})
&=\frac{1}{2}\sum_{(\alpha,i),(\beta,j)\in J_{\mathfrak{g}}^{(0,0)}}\frac{C_{\alpha\beta}}{t_{\alpha}}\min(t_{\alpha}j,t_{\beta}i)m_{\alpha,i}m_{\beta,j}-\sum_{\alpha\in I_r}\sum_{i,j=1}^{t_{\alpha}k}\min(i,j)m_{\alpha,i}n_{\alpha,j}.\label{eq:4.3}
\end{align}
The restricted $M$- and $N$- sums $M_{\lambda,\mathbf{n}}^{(k)}(q^{-1})$ and $N_{\lambda,\mathbf{n}}^{(k)}(q^{-1})$ are then defined as follows:
\begin{align}
M_{\lambda,\mathbf{n}}^{(k)}(q^{-1})
&=\sum_{\substack{\mathbf{m}\geq\mathbf{0}\\q_{\alpha,0}=0,p_{\alpha,i}\geq0}}q^{Q_k(\mathbf{m},\mathbf{n})}\prod_{(\alpha,i)\in J_{\mathfrak{g}}^{(0,0)}}\begin{bmatrix}m_{\alpha,i}+p_{\alpha,i}\\m_{\alpha,i}\end{bmatrix}_q,\quad\text{and}\label{eq:4.4}\\
N_{\lambda,\mathbf{n}}^{(k)}(q^{-1})
&=\sum_{\substack{\mathbf{m}\geq\mathbf{0}\\q_{\alpha,0}=0}}q^{Q_k(\mathbf{m},\mathbf{n})}\prod_{(\alpha,i)\in J_{\mathfrak{g}}^{(0,0)}}\begin{bmatrix}m_{\alpha,i}+p_{\alpha,i}\\m_{\alpha,i}\end{bmatrix}_q.\label{eq:4.5}
\end{align}
We note that the integers $q_{\alpha,0}$ were introduced for the purpose of imposing the restriction on the summation variables.

The rest of the section is devoted to proving the following theorem:
\begin{theorem}\label{4.1}
Let $\lambda$ be a dominant integral weight of $\mathfrak{g}$, $\mathbf{n}=(n_{\alpha,i})_{\alpha\in I_r,i\in\mathbb{N}}$ be a vector of nonnegative integers that parameterizes a finite set of KR-modules over $\mathfrak{g}[t]$, and $k$ be a positive integer. Then we have 
\begin{equation*}
M_{\lambda,\mathbf{n}}^{(k)}(q^{-1})=N_{\lambda,\mathbf{n}}^{(k)}(q^{-1}).
\end{equation*}
\end{theorem}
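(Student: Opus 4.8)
The plan is to follow the strategy of Di Francesco and Kedem in \cite{DFK14}, adapting it to the non-simply laced setting via the bookkeeping provided by the sets $J_{\mathfrak{g}}^{(j,p)}$ and the quantum $Q$-system relations \eqref{eq:3.20}. The key idea is to encode the $N$-sum $N_{\lambda,\mathbf{n}}^{(k)}(q^{-1})$ as a constant term (or residue) of a suitable ordered product of quantum $Q$-system variables $\widehat{Q}_{\alpha,i}$, evaluated against the initial data attached to a fundamental Motzkin path. Concretely, I would introduce a quantum generating function $\widehat{N}_{\lambda,\mathbf n}^{(k)}$ built from the ordered product $\widehat P_{\mathbf n}$ of \eqref{eq:1.9}, together with factors of $(1-\widehat Y_{\alpha,i})^{-1}$ governing the unbounded $\mathbf m$-summation; expanding each $(1-\widehat Y_{\alpha,i})^{-1}$ as a geometric series and reading off the $\nu$-powers (using the commutation relations \eqref{eq:3.16} and the combinatorics of the ordered product $\normord{\cdot}$) reproduces exactly the quadratic form $Q_k(\mathbf m,\mathbf n)$, the $q$-binomial factors, and the linear constraint $q_{\alpha,0}=0$ coming from the weight $\lambda$. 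The point of the $N$-sum (as opposed to the $M$-sum) is that the geometric series are formal and unrestricted, so no positivity of vacancy numbers is needed at this stage.

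Next I would establish a \emph{factorization property} for this quantum generating function, analogous to \cite[Theorem/Prop.]{DFK08, DFK14}: using the quantum $Q$-system relation in the form \eqref{eq:3.22}, namely $\nu^{-\Lambda_{\alpha\alpha}}\widehat Q_{\alpha,k+1}\widehat Q_{\alpha,k-1}=\widehat Q_{\alpha,k}^2(1-\widehat Y_{\alpha,k})$, one can absorb each factor $(1-\widehat Y_{\alpha,i})$ into a ratio of $\widehat Q$'s and thereby telescope. The role of Lemma \ref{3.7} is precisely to guarantee that the requisite variables $\widehat Q_{\beta,\bullet}$ and $\widehat Z_{\alpha,\bullet}$ commute in the right patterns, so that these substitutions can be carried out inside the ordered product without generating spurious $\nu$-powers; the two different "layers" of the Motzkin path for $\Pi_\circ$ versus $\Pi_\bullet$ (reflected in the staircase structure of \eqref{eq:1.9} and in the sets $J_{\mathfrak g}^{(j,p)}$) are handled by iterating the factorization $t_0$ steps at a time for the short roots against one step for the long roots. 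Carrying this out from level $k$ down to level $0$ rewrites $\widehat N_{\lambda,\mathbf n}^{(k)}$ as a product of "elementary" generating functions, one per value of the recursion index, and the same manipulation performed on the $M$-side — where now the geometric series are genuinely truncated by $p_{\alpha,i}\ge 0$ — yields the \emph{same} factored expression. Matching the two factorizations term by term gives $M_{\lambda,\mathbf n}^{(k)}(q^{-1})=N_{\lambda,\mathbf n}^{(k)}(q^{-1})$, and letting $k\to\infty$ recovers Theorem \ref{1.2}.

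The main obstacle, and the place where genuinely new work is needed beyond \cite{DFK14}, is controlling the $\nu$-power bookkeeping in the non-simply laced factorization: every rearrangement inside $\normord{\cdot}$ produces a correction $\nu^{\frac12 C(\cdot,\cdot)}$, and in the non-simply laced case these corrections involve the asymmetric quantities $\Lambda_{\alpha\beta}i-\Lambda_{\beta\alpha}j$ together with the floor functions $\lfloor t_\beta(k+i)/t_\alpha\rfloor$ appearing in \eqref{eq:3.21}. One must check that, after the dust settles, the exponent of $q=\nu^{?}$ reassembles exactly into $Q_k(\mathbf m,\mathbf n)$ with its non-symmetric coefficient $\tfrac{C_{\alpha\beta}}{t_\alpha}\min(t_\alpha j,t_\beta i)$, and that the constraint bookkeeping (the vacancy numbers $p_{\alpha,i}$ of \eqref{eq:4.2}, with their $\Pi_\circ$/$\Pi_\bullet$-dependent ranges) is preserved under the telescoping. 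I expect this to require a careful case analysis according to whether the indices lie in $\Pi_\circ$ or $\Pi_\bullet$, exactly mirroring the eight cases of Lemma \ref{3.7}, together with the Laurent polynomiality of Lemma \ref{3.8} to ensure all intermediate expressions are well-defined elements of the quantum torus. The remaining ingredients — extracting the constant term, the specialization $\nu\to$ the relevant power of $q$, and the limit $k\to\infty$ — are then routine adaptations of \cite{DFK08, DFK14}.
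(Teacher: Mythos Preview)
Your overall strategy is on the right track: the paper does encode $N_{\lambda,\mathbf{n}}^{(k)}(q^{-1})$ as the constant-term-and-evaluation $\phi$ of a quantum generating function $Z_{\lambda,\mathbf{n}}^{(k)}(\widehat{\mathbf{Y}}_{\vec{s}_0})$ (equation~\eqref{eq:4.33}), and the factorization of $Z$ via the quantum $Q$-system relations (Theorem~\ref{4.11}, Corollary~\ref{4.13}) is essentially as you describe, with Lemma~\ref{3.7} supplying the commutation relations needed to telescope through the two-layer $\Pi_\circ$/$\Pi_\bullet$ structure.

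However, your account of how $M=N$ is finally deduced contains a genuine gap. You propose to perform ``the same manipulation on the $M$-side---where now the geometric series are genuinely truncated by $p_{\alpha,i}\ge 0$---[to yield] the \emph{same} factored expression'', and then match term by term. This cannot work: the summation identity driving the telescoping (Lemma~\ref{4.7}, essentially $\sum_{a\ge 0}\begin{bsmallmatrix}a+b\\a\end{bsmallmatrix}_q \widehat Y_{\alpha,i+1}^a \widehat Z_{\alpha,i}^b = \widehat Z_{\alpha,i}^{-1}\widehat Z_{\alpha,i+1}^{\,b+1}$) requires the \emph{unrestricted} sum over $a\ge 0$. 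A sum truncated by $p_{\alpha,i}\ge 0$ does not close up into a clean product of $\widehat Z$'s, so there is no parallel factorization of an ``$M$-generating function'' to match against.

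What the paper actually does is different and more subtle. Only the $N$-side is factorized. One then argues, by backward induction on the level $j=k,\ldots,0$, that each summand of the $N$-sum with some $q_{\beta,i}<0$ (equivalently $p_{\beta,i}<0$ once $q_{\beta,0}=0$ is imposed) contributes \emph{zero} to $\phi$. The mechanism is: using the partial factorization of Corollary~\ref{4.13}, a generic offending summand $S$ is written as $\widehat Z_{\bullet,0}^{-1}\widehat Z_{\circ,0}^{-1}$ times a product in which $\widehat Q_{\beta,\cdot}$ appears with the negative exponent $q_{\beta,i}$; the Laurent property (Lemma~\ref{3.8}, sharpened as Lemmas~\ref{4.14} and~\ref{4.16}) then forces $S$ to have only \emph{nonnegative} powers of $\widehat Q_{\beta,\pm 1}$, and the prefactor $\widehat Z_{\beta,0}^{-1}$ makes the power of $\widehat Q_{\beta,1}$ strictly positive. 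Lemma~\ref{4.15} shows that any such term is annihilated by the evaluation, so $\phi(S)=0$. In your proposal, Laurent polynomiality is invoked only as a well-definedness check; in fact it is the crux of the $M=N$ argument, not a technicality, and your ``routine adaptation'' of the constant term extraction is where the entire content of the theorem lives.
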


Here, we remark that Theorem \ref{4.1} is proved for all simple $\mathfrak{g}$ in the classical case $q=1$ \cite{DFK08}, and for all simply-laced $\mathfrak{g}$ in the graded case \cite{DFK14}. In both cases, the broad strategy in proving Theorem \ref{4.1} is as follows: firstly, Di Francesco and Kedem defined (quantum) generating functions whose ``constant term evaluation'' is equal to $N_{\lambda,\mathbf{n}}^{(k)}(q^{-1})$ up to a constant depending on $\mathbf{n}$ and $\lambda$, and factorizes into a into a product of (quantum) $Q$-system variables and their inverses. In the former case, Di Francesco and Kedem also defined intermediate generating functions to account for the extra terms corresponding to the short root indices. The next step is to show that the terms in the summation with negative $p_{\alpha,i}$ for some $\alpha\in I_r$, $1\leq i\leq t_{\alpha}k$, do not contribute to the constant term evaluation, from which we would get $M_{\lambda,\mathbf{n}}^{(k)}(q^{-1})=N_{\lambda,\mathbf{n}}^{(k)}(q^{-1})$. 

In a similar fashion, we would employ a largely similar strategy outlined in \cite{DFK14} in proving Theorem \ref{4.1} for the case where $\mathfrak{g}$ is non-simply laced, where as in \cite{DFK08}, we will also define intermediate generating quantum generating functions to account for the extra terms corresponding to the short root indices.

\subsection{The quantum generating functions}

Firstly, we define the modified restricted vacancy numbers $q_{\alpha,i}$ for all $\alpha\in I_r$ and $1\leq i\leq t_{\alpha}k$ as follows:
\begin{align}\label{eq:4.6}
q_{\alpha,i}
&=p_{\alpha,i}+q_{\alpha,0}\nonumber\\
&=\ell_{\alpha}+\sum_{\substack{(\beta,j)\in J_{\mathfrak{g}}^{(0,0)}:\\t_{\alpha}j>t_{\beta}i}}\frac{C_{\alpha\beta}}{t_{\alpha}}(t_{\alpha}j-t_{\beta}i)m_{\beta,j}-\sum_{j=i+1}^{t_{\alpha}k}(j-i)n_{\alpha,j}.
\end{align}
The equation \eqref{eq:4.6} could be written in a more explicit manner as follows: for all $\alpha\in\Pi_{\circ}$ and $0\leq i\leq k$, we have
\begin{equation}\label{eq:4.7}
q_{\alpha,i}
=\ell_{\alpha}+\sum_{j=i+1}^k(j-i)\left(\sum_{\beta\in\Pi_{\circ}}C_{\alpha\beta}m_{\beta,j}-n_{\alpha,j}\right)+\sum_{j=t_0i+1}^{t_0k}(j-t_0i)\sum_{\omega\in\Pi_{\bullet}}C_{\alpha\omega}m_{\omega,j},
\end{equation}
and for all $\beta\in\Pi_{\bullet}$ and $0\leq i\leq t_0k$, we have
\begin{equation}\label{eq:4.8}
q_{\beta,i}
=\ell_{\beta}+\sum_{j=i+1}^{t_0k}(j-i)\left(\sum_{\alpha\in\Pi_{\bullet}}C_{\beta\alpha}m_{\alpha,j}-n_{\beta,j}\right)+\sum_{j=\left\lfloor\frac{i}{t_0}\right\rfloor+1}^k(t_0j-i)\sum_{\omega\in\Pi_{\circ}}C_{\omega\beta}m_{\omega,j}.
\end{equation}
In order to facilitate the definition of the quantum generating functions that arise from partial summations, we will need to make a few extra definitions below the fold. We first observe that for all $\alpha\in\Pi_{\circ}$ and $0\leq i<k$, the conditions $(\beta,j)\in J_{\mathfrak{g}}^{(0,0)}$ and $t_{\alpha}j>t_{\beta}i$ is equivalent to the condition $(\beta,j)\in J_{\mathfrak{g}}^{(i,0)}$, that is, equation \eqref{eq:4.7} is equivalent to 
\begin{equation}\label{eq:4.9}
q_{\alpha,i}
=\ell_{\alpha}+\sum_{(\beta,j)\in J_{\mathfrak{g}}^{(i,0)}}\frac{C_{\alpha\beta}}{t_{\alpha}}(t_{\alpha}j-t_{\beta}i)m_{\beta,j}-\sum_{j=i+1}^{t_{\alpha}k}(j-i)n_{\alpha,j}.
\end{equation}
This leads to the following definition of the ``intermediate" modified vacancy numbers $q_{\alpha,i}^{(p)}$ for all $\alpha\in\Pi_{\circ}$, $0\leq i<k$ and $0<p<t_0$:
\begin{equation}\label{eq:4.10}
q_{\alpha,i}^{(p)}
=\ell_{\alpha}+\sum_{(\beta,j)\in J_{\mathfrak{g}}^{(i,p)}}\frac{C_{\alpha\beta}}{t_{\alpha}}(t_{\alpha}j-t_{\beta}i)m_{\beta,j}-\sum_{j=i+1}^{t_{\alpha}k}(j-i)n_{\alpha,j}.
\end{equation}
As before, the equation \eqref{eq:4.10} could be written explicitly as follows:
\begin{equation}\label{eq:4.11}
q_{\alpha,i}^{(p)}
=\ell_{\alpha}+\sum_{j=i+1}^k(j-i)\left(\sum_{\beta\in\Pi_{\circ}}C_{\alpha\beta}m_{\beta,j}-n_{\alpha,j}\right)+\sum_{j=t_0i+p+1}^{t_0k}(j-t_0i)\sum_{\omega\in\Pi_{\bullet}}C_{\alpha\omega}m_{\omega,j}.
\end{equation}
As in \cite{DFK14}, our first step is to rewrite $Q_k(\mathbf{m},\mathbf{n})$ in terms of the modified vacancy numbers $q_{\alpha,i}$. To this end, we will first need a few notations. Given any vector $\mathbf{v}=(v_{\alpha,i})_{(\alpha,i)\in J_{\mathfrak{g}}^{(0,0)}}$ of non-negative integers, we let $\mathbf{v}_i^{\circ}=(v_{\alpha,i})_{\alpha\in\Pi_{\circ}}$, $\mathbf{v}_i^{\bullet}=(v_{\alpha,i})_{\alpha\in\Pi_{\bullet}}$ and $\mathbf{v}_i=(v_{\alpha,t_{\alpha}i})_{\alpha\in I_r}$. Also, we let $\ell^{\circ}=(\ell_{\alpha})_{\alpha\in \Pi_{\circ}}$, $\ell^{\bullet}=(\ell_{\bullet})_{\alpha\in \Pi_{\circ}}$ and $\mathbf{q}_i^{\circ,(p)}$ denote the vector $(q_{\alpha,i}^{(p)})_{\alpha\in\Pi_{\circ}}$ for all non-negative integers $i<k$ and $0<p<t_0$.

Next, we let $C^{\circ}=(C_{\alpha\beta})_{\alpha,\beta\in\Pi_{\circ}}$, $C^{\bullet}=(C_{\alpha\beta})_{\alpha,\beta\in\Pi_{\bullet}}$, $D=(C_{\alpha\beta})_{\alpha\in\Pi_{\circ},\beta\in\Pi_{\bullet}}$, $\Lambda^{\circ}=(\Lambda_{\alpha\beta})_{\alpha,\beta\in\Pi_{\circ}}$, $\Lambda^{\bullet}=(\Lambda_{\alpha\beta})_{\alpha,\beta\in\Pi_{\bullet}}$, and $A=(\Lambda_{\alpha\beta})_{\alpha\in\Pi_{\circ},\beta\in\Pi_{\bullet}}$. Then it is easy to see (up to a rearrangement of the rows and columns of $C$ and $\Lambda$ in the case where $\mathfrak{g}$ is of type $C$) that the matrices $C$ and $\Lambda$ has the following block form:
\begin{equation}\label{eq:4.12}
C=
\begin{pmatrix}
C^{\circ} & D\\
t_0D^t & C^{\bullet}
\end{pmatrix},\quad
\Lambda=
\begin{pmatrix}
\Lambda^{\circ} & A\\
t_0A^t & \Lambda^{\bullet}
\end{pmatrix}.
\end{equation}
We are now ready to rewrite $Q_k(\mathbf{m},\mathbf{n})$ in terms of the $q_{\alpha,i}$'s:

\begin{lemma}\label{4.2}
Let $\mathbf{m}=(m_{\alpha,i})_{(\alpha,i)\in J_{\mathfrak{g}}^{(0,0)}}, \mathbf{n}=(n_{\alpha,i})_{(\alpha,i)\in J_{\mathfrak{g}}^{(0,0)}}$ be vectors of nonnegative integers. Then we have
\begin{align}
Q_k(\mathbf{m},\mathbf{n})
&=\frac{1}{2\delta}\left[\sum_{j=1}^k[(\mathbf{q}_{j-1}^{\circ}-\mathbf{q}_j^{\circ})\cdot\Lambda^{\circ}(\mathbf{q}_{j-1}^{\circ}-\mathbf{q}_j^{\circ})+2(\mathbf{q}_{j-1}^{\circ}-\mathbf{q}_j^{\circ})\cdot A(\mathbf{q}_{t_0(j-1)}^{\bullet}-\mathbf{q}_{t_0j}^\bullet)]\right.\nonumber\\
&\quad\qquad\left.+\sum_{j=1}^{t_0k}(\mathbf{q}_{j-1}^{\bullet}-\mathbf{q}_j^{\bullet})\cdot\Lambda^{\bullet}(\mathbf{q}_{j-1}^{\bullet}-\mathbf{q}_j^{\bullet})-L_k(\mathbf{n})-\sum_{j=1}^k U_j\right],\label{eq:4.13}
\end{align}
where 
\begin{align}
L_k(\mathbf{n})
&=\sum_{(\alpha,i),(\beta,j)\in J_{\mathfrak{g}}^{(0,0)}}\frac{\Lambda_{\alpha\beta}}{t_{\alpha}}\min(t_{\alpha}j,t_{\beta}i)n_{\alpha,i}n_{\beta,j},\text{ and}\label{eq:4.14}\\
U_j
&=\frac{1}{t_0}\sum_{i=1}^{t_0}\mathbf{e}_{t_0(j-1)+i}\cdot D^t(\Lambda^{\circ}D\mathbf{e}_{t_0(j-1)+i}-2t_0A\mathbf{f}_{t_0(j-1)+i}),\label{eq:4.15}
\end{align}
with
\begin{align}
\mathbf{e}_{t_0(j-1)+i}
&=\sum_{s=1}^{i-1}s\mathbf{m}_{t_0(j-1)+s}^{\bullet}+\sum_{s=i}^{t_0-1}(s-t_0)\mathbf{m}_{t_0(j-1)+s}^{\bullet},\text{ and}\label{eq:4.16}\\
\mathbf{f}_{t_0(j-1)+i}
&=\sum_{s=i}^{t_0-1}\mathbf{n}_{t_0(j-1)+s}^{\bullet}\label{eq:4.17}
\end{align}
for all $1\leq i\leq t_0$ and $1\leq j\leq k$.
\end{lemma}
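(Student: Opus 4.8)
The plan is to prove Lemma \ref{4.2} by a direct but carefully organized algebraic manipulation, converting the quadratic form $Q_k(\mathbf{m},\mathbf{n})$ written in the $m$-variables into one written in the modified vacancy numbers $q_{\alpha,i}$. First I would recall the defining relation \eqref{eq:4.6}, which expresses each $q_{\alpha,i}$ affinely in terms of the $m_{\beta,j}$'s and $n_{\beta,j}$'s; inverting this relation, I would express the differences $q_{\alpha,i-1}-q_{\alpha,i}$ (and, for short roots, the analogous first differences, and for the $\circ$-labels the telescoping over blocks of size $t_0$) back in terms of the $m$'s. The key observation, exactly as in \cite{DFK14}, is that taking a ``discrete second derivative'' of $q_{\alpha,i}$ in the index $i$ recovers $m_{\alpha,i}$ (up to contributions involving $C_{\alpha\beta}$ for the off-block short/long interaction and boundary terms involving $\mathbf{n}$). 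This lets one substitute and re-sum. Concretely, I would write $Q_k(\mathbf{m},\mathbf{n})=\frac12\sum \frac{C_{\alpha\beta}}{t_\alpha}\min(t_\alpha j,t_\beta i)m_{\alpha,i}m_{\beta,j}-\sum\min(i,j)m_{\alpha,i}n_{\alpha,j}$ and use the identity $\min(t_\alpha j,t_\beta i)=\sum_{s}[\text{indicator}]$ to turn the $\min$ into a sum, so that the quadratic form becomes a sum of squares of partial sums of the $m$'s; these partial sums are precisely (linear combinations of) the $q$'s via \eqref{eq:4.6}.

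Next I would handle the block structure dictated by \eqref{eq:4.12}. Splitting the index set $J_{\mathfrak{g}}^{(0,0)}$ into its $\Pi_{\circ}$-part and $\Pi_{\bullet}$-part, the quadratic form decomposes into a $\circ$–$\circ$ piece (governed by $C^{\circ},\Lambda^{\circ}$), a $\bullet$–$\bullet$ piece (governed by $C^{\bullet},\Lambda^{\bullet}$), and a cross piece (governed by $D$ and $A$). Using $\Lambda C=\delta I$ in block form — which gives $\Lambda^{\circ}C^{\circ}+A\,t_0D^t=\delta I$, $\Lambda^{\circ}D+A C^{\bullet}=0$, $t_0A^tC^{\circ}+\Lambda^{\bullet}t_0D^t=0$, and $t_0A^tD+\Lambda^{\bullet}C^{\bullet}=\delta I$ — I can rewrite the coefficients so that the $\circ$–$\circ$ and $\bullet$–$\bullet$ quadratic pieces turn into $\frac{1}{2\delta}\sum_j(\mathbf{q}_{j-1}^{\circ}-\mathbf{q}_j^{\circ})\cdot\Lambda^{\circ}(\mathbf{q}_{j-1}^{\circ}-\mathbf{q}_j^{\circ})$ and $\frac{1}{2\delta}\sum_j(\mathbf{q}_{j-1}^{\bullet}-\mathbf{q}_j^{\bullet})\cdot\Lambda^{\bullet}(\mathbf{q}_{j-1}^{\bullet}-\mathbf{q}_j^{\bullet})$ respectively. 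The cross term produces the $2(\mathbf{q}_{j-1}^{\circ}-\mathbf{q}_j^{\circ})\cdot A(\mathbf{q}_{t_0(j-1)}^{\bullet}-\mathbf{q}_{t_0j}^{\bullet})$ contribution, but — and this is the subtle point — the telescoping of short-root indices within a block of length $t_0$ does not collapse cleanly, leaving the ``defect'' terms $U_j$ built from the intra-block partial sums $\mathbf{e}_{t_0(j-1)+i}$ of the $\mathbf{m}^{\bullet}$'s and the tails $\mathbf{f}_{t_0(j-1)+i}$ of the $\mathbf{n}^{\bullet}$'s as in \eqref{eq:4.16}–\eqref{eq:4.17}. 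The purely $\mathbf{n}$-dependent remainder, collected from the $-\sum\min(i,j)m_{\alpha,i}n_{\alpha,j}$ terms after completing the square, is exactly $-\frac{1}{2\delta}L_k(\mathbf{n})$ with $L_k(\mathbf{n})$ as in \eqref{eq:4.14}.

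I would carry this out in three steps in order: (i) substitute $\min(t_\alpha j,t_\beta i)$ by a sum of indicators and rewrite $Q_k$ as a sum over ``cut levels'' of products of partial sums of $m$'s; (ii) identify these partial sums with the differences $q_{\alpha,i-1}-q_{\alpha,i}$ using \eqref{eq:4.6}, being careful about the long/short index rescaling by $t_0$ and about the boundary contributions at $i=0$ (where $q_{\alpha,0}=p_{\alpha,0}$ is built in) and at $i=t_\alpha k$; (iii) apply the block identities coming from $\Lambda C=\delta I$ to assemble the three quadratic pieces and isolate the correction terms $L_k(\mathbf{n})$ and $U_j$, finally checking \eqref{eq:4.15}–\eqref{eq:4.17} by a direct expansion of the intra-block sums. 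The main obstacle I expect is step (iii), specifically the bookkeeping of the short-root contributions within each length-$t_0$ block: the map $\mathbf{m}^{\bullet}\mapsto\mathbf{q}^{\bullet}$ telescopes neatly only at multiples of $t_0$, so the intermediate levels generate precisely the $U_j$ correction, and verifying that these match \eqref{eq:4.15} exactly — with the correct coefficients of $D^t\Lambda^{\circ}D$ and $t_0D^tA$ — requires a careful, slightly tedious re-indexing. This is where the non-simply-laced case genuinely differs from \cite{DFK14}, and where I would allocate the bulk of the detailed computation.
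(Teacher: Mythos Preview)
Your proposal is correct and follows essentially the same approach as the paper's proof: introduce the partial sums $M_{\alpha,i}=\sum_{j\ge i}m_{\alpha,j}$ and $N_{\alpha,i}=\sum_{j\ge i}n_{\alpha,j}$, express the first differences $\mathbf{q}_{j-1}^{\circ}-\mathbf{q}_j^{\circ}$ and $\mathbf{q}_{i-1}^{\bullet}-\mathbf{q}_i^{\bullet}$ in terms of these via \eqref{eq:4.7}--\eqref{eq:4.8}, expand the right-hand side of \eqref{eq:4.13} block by block using $C\Lambda=\delta I$ (and also $C\Lambda C=\delta C$), and independently rewrite $Q_k(\mathbf{m},\mathbf{n})$ using the indicator identity $\min(j,t_0j')=\sum_{s=0}^{t_0-1}\min(\lfloor(j+s)/t_0\rfloor,j')$. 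One ingredient you do not name explicitly but will need in your step~(iii) is the cancellation $\sum_{i=1}^{t_0}\mathbf{e}_{t_0(j-1)+i}=0$, which the paper uses to kill a spurious cross term of the form $\mathbf{e}_i\cdot D^t(\delta\mathbf{M}_j^{\circ}-\Lambda^{\circ}\mathbf{N}_j^{\circ}-t_0A\mathbf{N}_{t_0j}^{\bullet})$; once you have that, the remaining intra-block residue is exactly $U_j$ as in \eqref{eq:4.15}.
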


\begin{proof}
Firstly, we let
\begin{equation*}
M_{\alpha,i}=\sum_{j=i}^{t_{\alpha}k}m_{\alpha,i},\quad 
N_{\alpha,i}=\sum_{j=i}^{t_{\alpha}k}n_{\alpha,i},
\end{equation*}
for all $\alpha\in I_r$ and $1\leq i\leq t_{\alpha}k$, and $\mathbf{M}=(M_{\alpha,i})_{(\alpha,i)\in J_{\mathfrak{g}}^{(0,0)}}$ and $\mathbf{N}=(N_{\alpha,i})_{(\alpha,i)\in J_{\mathfrak{g}}^{(0,0)}}$. 

Our first step is to write $\mathbf{q}_i^{\circ}$ and $\mathbf{q}_i^{\bullet}$ in terms of $\mathbf{m}_j^{\circ}$, $\mathbf{m}_j^{\bullet}$, $\mathbf{n}_j^{\circ}$ and $\mathbf{n}_j^{\bullet}$ using equations \eqref{eq:4.7} and \eqref{eq:4.8} as follows:
\begin{align*}
\mathbf{q}_i^{\circ}&=\ell^{\circ}+\sum_{j=i+1}^k(j-i)[C^{\circ}\mathbf{m}_j^{\circ}-\mathbf{n}_j^{\circ}]+\sum_{j=t_0i+1}^{t_0k}(j-t_0i)D\mathbf{m}_j^{\bullet};\\
\mathbf{q}_i^{\bullet}&=\ell^{\bullet}+\sum_{j=i+1}^{t_0k}(j-i)[C^{\bullet}\mathbf{m}_j^{\bullet}-\mathbf{n}_j^{\bullet}]+\sum_{j=\left\lfloor\frac{i}{t_0}\right\rfloor+1}^k(t_0j-i)D^t\mathbf{m}_j^{\circ}.
\end{align*}
This implies that for all $1\leq j\leq k$ and $t_0(j-1)+1\leq i\leq t_0j$, we have
\begin{subequations}\label{eq:4.18}
\begin{align}
\mathbf{q}_{j-1}^{\circ}-\mathbf{q}_j^{\circ}
&=C^{\circ}\mathbf{M}_j^{\circ}-\mathbf{N}_j^{\circ}+t_0D\mathbf{M}_i^{\bullet}+D\mathbf{e}_i,\\
\mathbf{q}_{i-1}^{\bullet}-\mathbf{q}_i^{\bullet}
&=C^{\bullet}\mathbf{M}_i^{\bullet}-\mathbf{N}_i^{\bullet}+D^t\mathbf{M}_j^{\circ}.
\end{align}
\end{subequations}
Thus, we have
{\allowdisplaybreaks
\begin{align}
&\qquad(\mathbf{q}_{j-1}^{\circ}-\mathbf{q}_j^{\circ})\cdot\Lambda^{\circ}(\mathbf{q}_{j-1}^{\circ}-\mathbf{q}_j^{\circ})\label{eq:4.19}\\
&=\frac{1}{t_0}\sum_{i=t_0(j-1)+1}^{t_0j}(C^{\circ}\mathbf{M}_j^{\circ}-\mathbf{N}_j^{\circ}+t_0D\mathbf{M}_i^{\bullet}+D\mathbf{e}_i)\cdot\Lambda^{\circ}(C^{\circ}\mathbf{M}_j^{\circ}-\mathbf{N}_j^{\circ}+t_0D\mathbf{M}_i^{\bullet}+D\mathbf{e}_i)\nonumber\\
&=\mathbf{M}_j^{\circ}\cdot C^{\circ}\Lambda^{\circ}(C^{\circ}\mathbf{M}_j^{\circ}-2\mathbf{N}_j^{\circ})+\mathbf{N}_j^{\circ}\cdot\Lambda^{\circ}\mathbf{N}_j^{\circ}+\sum_{i=t_0(j-1)+1}^{t_0j}(t_0D\mathbf{M}_i^{\bullet}+2C^{\circ}\mathbf{M}_j^{\circ})\cdot\Lambda^{\circ}D\mathbf{M}_i^{\bullet}\nonumber\\
&\quad+\frac{1}{t_0}\sum_{i=t_0(j-1)+1}^{t_0j}\left[\mathbf{e}_i\cdot D^t\Lambda^{\circ}\left(2C^{\circ}\mathbf{M}_j^{\circ}-2\mathbf{N}_j^{\circ}+2t_0D\mathbf{M}_i^{\bullet}+D\mathbf{e}_i\right)-2t_0\mathbf{M}_i^{\bullet}\cdot D^t\Lambda^{\circ}\mathbf{N}_j^{\circ}\right],\nonumber\\
&\qquad\sum_{i=t_0(j-1)+1}^{t_0j}(\mathbf{q}_{j-1}^{\bullet}-\mathbf{q}_j^{\bullet})\cdot\Lambda^{\bullet}(\mathbf{q}_{j-1}^{\bullet}-\mathbf{q}_j^{\bullet})\label{eq:4.20}\\
&=\sum_{i=t_0(j-1)+1}^{t_0j}(C^{\bullet}\mathbf{M}_i^{\bullet}-\mathbf{N}_i^{\bullet}+D^t\mathbf{M}_j^{\circ})\cdot\Lambda^{\bullet}(C^{\bullet}\mathbf{M}_i^{\bullet}-\mathbf{N}_i^{\bullet}+D^t\mathbf{M}_j^{\circ})\nonumber\\
&=t_0\mathbf{M}_j^{\circ}\cdot D\Lambda^{\bullet}D^t\mathbf{M}_j^{\circ}+\sum_{i=t_0(j-1)+1}^{t_0j}\left(C^{\bullet}\mathbf{M}_i^{\bullet}-2\mathbf{N}_i^{\bullet}+2D^t\mathbf{M}_j^{\circ}\right)\cdot \Lambda^{\bullet}C^{\bullet}\mathbf{M}_i^{\bullet}\nonumber\\
&\quad+\sum_{i=t_0(j-1)+1}^{t_0j}(\mathbf{N}_i^{\bullet}\cdot\Lambda^{\bullet}\mathbf{N}_i^{\bullet}-2\mathbf{M}_j^{\circ}\cdot D\Lambda^{\bullet}\mathbf{N}_i^{\bullet}),\quad\text{and}\nonumber\\
&\qquad 2(\mathbf{q}_{j-1}^{\circ}-\mathbf{q}_j^{\circ})\cdot A(\mathbf{q}_{t_0(j-1)}^{\bullet}-\mathbf{q}_{t_0j}^\bullet)\label{eq:4.21}\\
&=2\sum_{i=t_0(j-1)+1}^{t_0j}(C^{\circ}\mathbf{M}_j^{\circ}-\mathbf{N}_j^{\circ}+t_0D\mathbf{M}_i^{\bullet}+D\mathbf{e}_i)\cdot A(C^{\bullet}\mathbf{M}_i^{\bullet}-\mathbf{N}_i^{\bullet}+D^t\mathbf{M}_j^{\circ})\nonumber\\
&=2t_0\mathbf{M}_j^{\circ}\cdot DA^t(C^{\circ}\mathbf{M}_j^{\circ}-\mathbf{N}_j^{\circ})+2\sum_{i=t_0(j-1)+1}^{t_0j}\left[t_0\mathbf{M}_i^{\bullet}\cdot D^tA(C^{\bullet}\mathbf{M}_i^{\bullet}-\mathbf{N}_i^{\bullet})+\mathbf{N}_i^{\bullet}\cdot A^t\mathbf{N}_j^{\circ}\right]\nonumber\\
&\quad+2\sum_{i=t_0(j-1)+1}^{t_0j}\left[\mathbf{M}_j^{\circ}\cdot\left(C^{\circ}AC^{\bullet}+t_0DA^tD\right)\mathbf{M}_i^{\bullet}-\mathbf{M}_i^{\bullet}\cdot C^{\bullet}A^t\mathbf{N}_j^{\circ}-\mathbf{M}_j^{\circ}\cdot C^{\circ}A\mathbf{N}_i^{\bullet}\right]\nonumber\\
&\quad+2\sum_{i=t_0(j-1)+1}^{t_0j}\mathbf{e}_i\cdot\left(D^tAD^t\mathbf{M}_j^{\circ}+D^tAC^{\bullet}\mathbf{M}_i^{\bullet}-D^tA\mathbf{N}_i^{\bullet}\right).\nonumber
\end{align}
By using the fact that $C\Lambda=\delta I$ and $C\Lambda C=\delta C$, it follows from equations \eqref{eq:4.19}-\eqref{eq:4.21}, along with the block form \eqref{eq:4.12} of the matrices $C$ and $\Lambda$, that we have the RHS of \eqref{eq:4.13} to be equal to
}
\begin{align}
&\quad\frac{1}{2}\sum_{j=1}^k\left[\mathbf{M}_j^{\circ}\cdot(C^{\circ}\mathbf{M}_j^{\circ}-2\mathbf{N}_j^{\circ})+\sum_{i=t_0(j-1)+1}^{t_0j}(\mathbf{M}_i^{\bullet}\cdot(C^{\bullet}\mathbf{M}_i^{\bullet}-2\mathbf{N}_i^{\bullet})+2\mathbf{M}_j^{\circ}\cdot D\mathbf{M}_i^{\bullet})\right]\label{eq:4.22}\\
&\quad+\frac{1}{2\delta}\sum_{j=1}^k\left[\mathbf{N}_j^{\circ}\cdot\Lambda^{\circ}\mathbf{N}_j^{\circ}+\sum_{i=t_0(j-1)+1}^{t_0j}(\mathbf{N}_i^{\bullet}\cdot\Lambda^{\bullet}\mathbf{N}_i^{\bullet}+2\mathbf{N}_j^{\circ}\cdot A\mathbf{N}_i^{\bullet})\right]-\frac{1}{2\delta}L_k(\mathbf{n})\nonumber\\
&\quad+\frac{1}{\delta t_0}\sum_{j=1}^k\sum_{i=t_0(j-1)+1}^{t_0j}\mathbf{e}_i\cdot D^t(\delta \mathbf{M}_j^{\circ}-\Lambda^{\circ}\mathbf{N}_j^{\circ}-t_0A\mathbf{N}_{t_0j}^{\bullet}).\nonumber
\end{align}
It remains to show that expression \eqref{eq:4.22} is equal to $Q_k(\mathbf{m},\mathbf{n})$. To this end, we first observe from equation \eqref{eq:4.16} that we have 
\begin{equation*}
\sum_{i=t_0(j-1)+1}^{t_0j}\mathbf{e}_i=0 
\end{equation*}
for all $1\leq j\leq k$. As $D^t(\delta \mathbf{M}_j^{\circ}-\Lambda^{\circ}\mathbf{N}_j^{\circ}-t_0A\mathbf{N}_{t_0j}^{\bullet})$ is independent of $i$ for all $t_0(j-1)+1\leq i\leq t_0j$, it follows that we have
\begin{equation}\label{eq:4.23}
\sum_{j=1}^k\sum_{i=t_0(j-1)+1}^{t_0j}\mathbf{e}_i\cdot D^t(\delta \mathbf{M}_j^{\circ}-\Lambda^{\circ}\mathbf{N}_j^{\circ}-t_0A\mathbf{N}_{t_0j}^{\bullet})=0.
\end{equation}
Next, we make use of the fact that
\begin{equation*}
\min(j,t_0j')=\sum_{s=0}^{t_0-1}\min\left(\left\lfloor\frac{j+s}{t_0}\right\rfloor,j'\right)
\end{equation*}
to deduce that
\begin{align}
Q_k(\mathbf{m},\mathbf{n})
&=\frac{1}{2}\sum_{j,j'=1}^k\min(j,j')\mathbf{m}_j^{\circ}\cdot (C^{\circ}\mathbf{m}_{j'}^{\circ}-2\mathbf{n}_{j'}^{\circ})+\frac{1}{2}\sum_{j,j'=1}^{t_0k}\min(j,j')\mathbf{m}_j^{\bullet}\cdot (C^{\bullet}\mathbf{m}_{j'}^{\bullet}-2\mathbf{n}_{j'}^{\bullet})\label{eq:4.24}\\
&\quad+\sum_{j=1}^{t_0k}\sum_{j'=1}^k\min(j,t_0j')\mathbf{m}_{j'}^{\circ}\cdot D\mathbf{m}_j^{\bullet}\nonumber\\
&=\frac{1}{2}\sum_{j,j'=1}^k\sum_{i=1}^{\min(j,j')}\mathbf{m}_j^{\circ}\cdot (C^{\circ}\mathbf{m}_{j'}^{\circ}-2\mathbf{n}_{j'}^{\circ})+\frac{1}{2}\sum_{j,j'=1}^{t_0k}\sum_{i=1}^{\min(j,j')}\mathbf{m}_j^{\bullet}\cdot (C^{\bullet}\mathbf{m}_{j'}^{\bullet}-2\mathbf{n}_{j'}^{\bullet})\nonumber\\
&\quad+\sum_{j=1}^{t_0k}\sum_{j'=1}^k\sum_{s=0}^{t_0-1}\sum_{i=1}^{\min\left(\left\lfloor\frac{j+s}{t_0}\right\rfloor,j'\right)}\mathbf{m}_{j'}^{\circ}\cdot D\mathbf{m}_j^{\bullet}\nonumber\\
&=\frac{1}{2}\sum_{i=1}^k\sum_{j,j'=i}^k\mathbf{m}_j^{\circ}\cdot (C^{\circ}\mathbf{m}_{j'}^{\circ}-2\mathbf{n}_{j'}^{\circ})+\frac{1}{2}\sum_{i=1}^{t_0k}\sum_{j,j'=i}^{t_0k}\mathbf{m}_j^{\bullet}\cdot (C^{\bullet}\mathbf{m}_{j'}^{\bullet}-2\mathbf{n}_{j'}^{\bullet})\nonumber\\
&\quad+\sum_{s=0}^{t_0-1}\sum_{i=1}^k\sum_{j'=i}^k\sum_{j=t_0i-s}^{t_0k}\mathbf{m}_{j'}^{\circ}\cdot D\mathbf{m}_j^{\bullet}\nonumber\\
&=\frac{1}{2}\sum_{j=1}^k\left[\mathbf{M}_j^{\circ}\cdot(C^{\circ}\mathbf{M}_j^{\circ}-2\mathbf{N}_j^{\circ})+\sum_{i=t_0(j-1)+1}^{t_0j}(\mathbf{M}_i^{\bullet}\cdot(C^{\bullet}\mathbf{M}_i^{\bullet}-2\mathbf{N}_i^{\bullet})+2\mathbf{M}_j^{\circ}\cdot D\mathbf{M}_i^{\bullet})\right]\nonumber.
\end{align}
Likewise, by a similar argument, we have
\begin{equation}\label{eq:4.25}
L_k(\mathbf{n})
=\sum_{j=1}^k\left[\mathbf{N}_j^{\circ}\cdot\Lambda^{\circ}\mathbf{N}_j^{\circ}+\sum_{i=t_0(j-1)+1}^{t_0j}(\mathbf{N}_i^{\bullet}\cdot\Lambda^{\bullet}\mathbf{N}_i^{\bullet}+2\mathbf{N}_j^{\circ}\cdot A\mathbf{N}_i^{\bullet})\right].
\end{equation}
By combining equations \eqref{eq:4.23}-\eqref{eq:4.25}, it follows from expression \eqref{eq:4.22} that \eqref{eq:4.16} holds, and we are done. 
\end{proof}

We are now ready to define our quantum generating functions that specialize to the restricted $N$-sums via a constant term evaluation. To make our notations more compact, we will introduce some shorthand notations. For any vector $\mathbf{v}^{\circ}=(v_{\alpha})_{\alpha\in\Pi_{\circ}}$ (respectively $\mathbf{v}^{\bullet}=(v_{\alpha})_{\alpha\in\Pi_{\bullet}}$) of integers and $k\in\mathbb{Z}$, we shall denote the product $\prod_{\alpha\in\Pi_{\circ}}\widehat{Q}_{\alpha,k}^{v_{\alpha}}$ (respectively $\prod_{\alpha\in\Pi_{\bullet}}\widehat{Q}_{\alpha,k}^{v_{\alpha}}$) of the commuting variables $\widehat{Q}_{\alpha,k}^{v_{\alpha}}$ by $\widehat{Q}_{\circ,k}^{\mathbf{v}^{\circ}}$ (respectively $\widehat{Q}_{\bullet,k}^{\mathbf{v}^{\bullet}}$). Likewise, we will also use the shorthand notations $\widehat{Y}_{\circ,k}^{\mathbf{v}^{\circ}}$, $\widehat{Y}_{\bullet,k}^{\mathbf{v}^{\bullet}}$, $\widehat{Z}_{\circ,k}^{\mathbf{v}^{\circ}}$ and $\widehat{Z}_{\bullet,k}^{\mathbf{v}^{\bullet}}$. 

Let us fix the quantum parameter to be $q=\nu^{\delta}$, $u=\nu^{\frac{1}{2}}=q^{\frac{1}{2\delta}}$, and $\mathbb{Z}_u=\mathbb{Z}[u^{\pm1}]$. Let $\widehat{\mathbf{Y}}_{\vec{s}_i}$ denote the quantum torus $\{\widehat{Q}_{\alpha,t_{\alpha}i},\widehat{Q}_{\alpha,t_{\alpha}i+1}\}_{\alpha\in I_r}$, and let $\widehat{\mathbf{Q}}_i=\{\widehat{Q}_{\alpha,i}\}_{\alpha\in I_r}$ for all $i\in\mathbb{Z}$. For any ring $R$ and a set of variables $x=\{x_1,\ldots,x_n\}$, we let $R((x))$ denote the ring of formal Laurent series of the variables $x_1,\ldots,x_n$ with coefficients in $R$. We define the quantum generating function for multiplicities $Z_{\lambda,\mathbf{n}}^{(k)}(\widehat{\mathbf{Y}}_{\vec{s}_0})\in\mathbb{Z}_u[\widehat{\mathbf{Q}}_0^{\pm1}]((\widehat{\mathbf{Q}}_1^{-1}))$ in the quantum torus $\widehat{\mathbf{Y}}_{\vec{s}_0}$, subject to the commutation relations in Lemma \ref{3.5} as follows:
\begin{equation}\label{eq:4.26}
Z_{\lambda,\mathbf{n}}^{(k)}(\widehat{\mathbf{Y}}_{\vec{s}_0})
=\sum_{\mathbf{m}}q^{\overline{Q}_k(\mathbf{m},\mathbf{n})}\prod_{(\alpha,i)\in J_{\mathfrak{g}}^{(0,0)}}\begin{bmatrix}m_{\alpha,i}+q_{\alpha,i}\\m_{\alpha,i}\end{bmatrix}_q\widehat{Q}_{\bullet,1}^{-\mathbf{q}_0^{\bullet}}\widehat{Q}_{\circ,1}^{-\mathbf{q}_0^{\circ}}\widehat{Q}_{\bullet,0}^{\mathbf{q}_1^{\bullet}}\widehat{Q}_{\circ,0}^{\mathbf{q}_1^{\circ}+D\mathbf{e}_1}.
\end{equation}
Here, the sum is over all vectors $\mathbf{m}=(m_{\alpha,i})_{(\alpha,i)\in J_{\mathfrak{g}}^{(0,0)}}$ of non-negative integers, and the modified quadratic form $\overline{Q}_k(\mathbf{m},\mathbf{n})$ is defined by
\begin{align}
\quad\overline{Q}_k(\mathbf{m},\mathbf{n})
&=\frac{1}{2\delta}\left[\sum_{j=1}^{k-1}[(\mathbf{q}_j^{\circ}-\mathbf{q}_{j+1}^{\circ})\cdot\Lambda^{\circ}(\mathbf{q}_j^{\circ}-\mathbf{q}_{j+1}^{\circ})+2(\mathbf{q}_j^{\circ}-\mathbf{q}_{j+1}^{\circ})\cdot A(\mathbf{q}_{t_0j}^{\bullet}-\mathbf{q}_{t_0(j+1)}^\bullet)]\right.\label{eq:4.27}\\
&\qquad\quad+\sum_{j=1}^{t_0k-1}(\mathbf{q}_j^{\bullet}-\mathbf{q}_{j+1}^{\bullet})\cdot\Lambda^{\bullet}(\mathbf{q}_j^{\bullet}-\mathbf{q}_{j+1}^{\bullet})-\sum_{j=1}^k U_j+\mathbf{q}_1^{\circ}\cdot\Lambda^{\circ}\mathbf{q}_1^{\circ}+\mathbf{q}_1^{\bullet}\cdot\Lambda^{\bullet}\mathbf{q}_1^{\bullet}\nonumber\\
&\qquad\quad\left.-2(\mathbf{q}_0^{\circ}-\mathbf{q}_1^{\circ})\cdot A\mathbf{q}_{t_0}^{\bullet}+2t_0\mathbf{q}_0^{\circ}\cdot A\mathbf{q}_1^{\bullet}+2(\Lambda^{\circ}\mathbf{q}_0^{\circ}+A\mathbf{q}_0^{\bullet})\cdot D\mathbf{e}_1\vphantom{\sum_{i=1}}\right].\nonumber
\end{align}
From equation \eqref{eq:4.27}, we have
\begin{align*}
\overline{Q}_k(\mathbf{m},\mathbf{n})-Q_k(\mathbf{m},\mathbf{n})
&=-\frac{1}{2\delta}\left[\vphantom{\sum_{i=1}}\mathbf{q}_0^{\circ}\cdot\Lambda^{\circ}(\mathbf{q}_0^{\circ}-2\mathbf{q}_1^{\circ})+\mathbf{q}_0^{\bullet}\cdot\Lambda^{\bullet}(\mathbf{q}_0^{\bullet}-2\mathbf{q}_1^{\bullet})+2(\mathbf{q}_0^{\circ}-\mathbf{q}_1^{\circ})\cdot A\mathbf{q}_0^{\bullet}\right.\\
&\qquad\qquad\left.-2t_0\mathbf{q}_0^{\circ}\cdot A\mathbf{q}_1^{\bullet}-2(\Lambda^{\circ}\mathbf{q}_0^{\circ}+A\mathbf{q}_0^{\bullet})\cdot D\mathbf{e}_1-L_k({\mathbf{n}})\vphantom{\sum_{i=1}}\right].
\end{align*}
This implies that the modified quadratic form $\overline{Q}_k(\mathbf{m},\mathbf{n})$ is equal to $Q_k(\mathbf{m},\mathbf{n})+\frac{1}{2\delta}L_k({\mathbf{n}})$ when $\mathbf{q}_0^{\circ}=\mathbf{0}$ and $\mathbf{q}_0^{\bullet}=\mathbf{0}$.

In addition to the generating function $Z_{\lambda,\mathbf{n}}^{(k)}(\widehat{\mathbf{Y}}_{\vec{s}_0})$ that we have defined above, we would also need to define intermediate quantum generating functions $Z_{\lambda,\mathbf{n}}^{(k,p)}(\widehat{\mathbf{Y}}_{\vec{s}_{0,p}})$, where $0<p<t_0$, that arise from partial summations over $m_{\alpha,i}$, with $\alpha\in\Pi_{\bullet}$ and $t_0\not\divides i$. To this end, we will need to define truncations of a given vector $\mathbf{v}=(v_{\alpha,i})_{(\alpha,i)\in J_{\mathfrak{g}}^{(0,0)}}$. For any vector $\mathbf{v}=(v_{\alpha,i})_{(\alpha,i)\in J_{\mathfrak{g}}^{(0,0)}}$, $0\leq j\leq k$ and $0\leq p<t_0$, we define $\mathbf{v}^{(j,p)}:=(v_{\alpha,i})_{(\alpha,i)\in J_{\mathfrak{g}}^{(j,p)}}$. In particular, we have $\mathbf{v}^{(0,0)}=\mathbf{v}$.

We shall first define the quantum generating function $Z_{\lambda,\mathbf{n}}^{(k,t_0-1)}(\widehat{\mathbf{Y}}_{\vec{s}_{0,t_0-1}})$. To this end, we let $\vec{s}_{i,t_0-1}=((i+1)t_{\alpha}-1)_{\alpha\in I_r}$ for all $i\in\mathbb{Z}$. Then it is easy to verify that $\vec{s}_{i,t_0-1}$ is a generalized Motzkin path for all $i\in\mathbb{Z}$, so the components of the vector 
\begin{equation*}
\widehat{\mathbf{Y}}_{\vec{s}_{i,t_0-1}}=(\widehat{Q}_{\alpha,(i+1)t_{\alpha}-1},\widehat{Q}_{\alpha,(i+1)t_{\alpha}})_{\alpha\in I_r}
\end{equation*} 
form a valid set of initial data for the quantum $Q$-system. Hence, we define the quantum generating function $Z_{\lambda,\mathbf{n}}^{(k,t_0-1)}(\widehat{\mathbf{Y}}_{\vec{s}_{0,t_0-1}})$ in the quantum torus $\widehat{\mathbf{Y}}_{\vec{s}_{0,t_0-1}}$ as follows:
\begin{align}
&\quad Z_{\lambda,\mathbf{n}}^{(k,t_0-1)}(\widehat{\mathbf{Y}}_{\vec{s}_{0,t_0-1}})\label{eq:4.28}\\
&=\sum_{\mathbf{m}^{(0,t_0-1)}}q^{\overline{Q}_k^{(t_0-1)}(\mathbf{m},\mathbf{n})}\prod_{(\alpha,i)\in J_{\mathfrak{g}}^{(0,t_0-1)}}\begin{bmatrix}m_{\alpha,i}+q_{\alpha,i}\\m_{\alpha,i}\end{bmatrix}_q\widehat{Q}_{\bullet,t_0}^{-\mathbf{q}_{t_0-1}^{\bullet}}\widehat{Q}_{\circ,1}^{-\mathbf{q}_0^{\circ,(t_0-1)}}\widehat{Q}_{\bullet,t_0-1}^{\mathbf{q}_{t_0}^{\bullet}}\widehat{Q}_{\circ,0}^{\mathbf{q}_1^{\circ}}.\nonumber
\end{align}
Here, the sum is over all vectors $\mathbf{m}^{(0,t_0-1)}=(m_{\alpha,i})_{(\alpha,i)\in J_{\mathfrak{g}}^{(0,t_0-1)}}$ of non-negative integers, and the quadratic form $\overline{Q}_k^{(t_0-1)}(\mathbf{m},\mathbf{n})$ is defined by
\begin{align}
&\qquad\overline{Q}_k^{(t_0-1)}(\mathbf{m},\mathbf{n})\label{eq:4.29}\\
&=\frac{1}{2\delta}\left[\sum_{j=1}^{k-1}[(\mathbf{q}_j^{\circ}-\mathbf{q}_{j+1}^{\circ})\cdot\Lambda^{\circ}(\mathbf{q}_j^{\circ}-\mathbf{q}_{j+1}^{\circ})+2(\mathbf{q}_j^{\circ}-\mathbf{q}_{j+1}^{\circ})\cdot A(\mathbf{q}_{t_0j}^{\bullet}-\mathbf{q}_{t_0(j+1)}^\bullet)-U_{j+1}]\right.\nonumber\\
&\qquad\left.+\sum_{j=t_0}^{t_0k-1}(\mathbf{q}_j^{\bullet}-\mathbf{q}_{j+1}^{\bullet})\cdot\Lambda^{\bullet}(\mathbf{q}_j^{\bullet}-\mathbf{q}_{j+1}^{\bullet})+\mathbf{q}_1^{\circ}\cdot\Lambda^{\circ}\mathbf{q}_1^{\circ}+\mathbf{q}_{t_0}^{\bullet}\cdot\Lambda^{\bullet}\mathbf{q}_{t_0}^{\bullet}+2\mathbf{q}_1^{\circ}\cdot A\mathbf{q}_{t_0}^{\bullet}\right].\nonumber
\end{align}
One important property of the quadratic form $\overline{Q}_k^{(t_0-1)}(\mathbf{m},\mathbf{n})$ is that it is independent of $m_{\alpha,i}$ and $n_{\alpha,i}$ for all $\alpha\in I_r$ and $1\leq i\leq t_{\alpha}$.

If $t_0=2$, that is, $\mathfrak{g}$ is of type $BCF$, then we are done with defining the intermediate quantum generating functions. Else, if $t_0=3$, that is, $\mathfrak{g}$ is of type $G_2$, then we need to define the intermediate quantum generating function $Z_{\lambda,\mathbf{n}}^{(k,1)}(\widehat{\mathbf{Y}}_{\vec{s}_{0,1}})$. We let $\vec{s}_{i,1}=(i,3i+1)$ for all $i\in\mathbb{Z}$. Then it is easy to verify that $\vec{s}_{i,1}$ is a generalized Motzkin path for all $i\in\mathbb{Z}$, so the components of the vector 
\begin{equation*}
\widehat{\mathbf{Y}}_{\vec{s}_{i,1}}=(\widehat{Q}_{1,i},\widehat{Q}_{2,3i+1},\widehat{Q}_{1,i+1},\widehat{Q}_{2,3i+2})
\end{equation*} 
form a valid set of initial data for the quantum $Q$-system. Hence, we define the quantum generating function $Z_{\lambda,\mathbf{n}}^{(k,1)}(\widehat{\mathbf{Y}}_{\vec{s}_{0,1}})$ in the quantum torus $\widehat{\mathbf{Y}}_{\vec{s}_{0,1}}$ as follows:
\begin{equation}\label{eq:4.30}
Z_{\lambda,\mathbf{n}}^{(k,1)}(\widehat{\mathbf{Y}}_{\vec{s}_{0,1}})
=\sum_{\mathbf{m}^{(0,1)}}q^{\overline{Q}_k^{(1)}(\mathbf{m},\mathbf{n})}\prod_{(\alpha,i)\in J_{\mathfrak{g}}^{(0,1)}}\begin{bmatrix}m_{\alpha,i}+q_{\alpha,i}\\m_{\alpha,i}\end{bmatrix}_q\widehat{Q}_{2,2}^{-q_{2,1}}\widehat{Q}_{1,1}^{-q_{1,0}^{(1)}}\widehat{Q}_{2,1}^{q_{2,2}}\widehat{Q}_{1,0}^{q_{1,1}+m_{2,2}}.
\end{equation}
Here, the sum is over all vectors $\mathbf{m}^{(0,1)}=(m_{\alpha,i})_{(\alpha,i)\in J_{\mathfrak{g}}^{(0,1)}}$ of non-negative integers, and the quadratic form $\overline{Q}_k^{(1)}(\mathbf{m},\mathbf{n})$ is defined by
\begin{align}
&\qquad\overline{Q}_k^{(1)}(\mathbf{m},\mathbf{n})\label{eq:4.31}\\
&=\sum_{j=1}^{k-1}\left[(q_{1,j}-q_{1,j+1})^2+(q_{1,j}-q_{1,j+1})(q_{2,3j}-q_{2,3j+3})\right]-\frac{1}{2}\sum_{j=1}^{k-1}U_{j+1}+\sum_{j=2}^{3k-1}(q_{2,j}-q_{2,j+1})^2\nonumber\\
&\quad+q_{1,1}^2+q_{2,2}^2+q_{1,1}q_{2,3}+q_{1,0}^{(1)}(q_{2,1}+n_{2,2})+m_{2,2}(2q_{2,1}-q_{2,2}-2m_{2,2}+2n_{2,2}).\nonumber
\end{align}
Here, we note that in the case of type $G_2$, we have $\delta=1$. It is easy to see that the quadratic form $\overline{Q}_k^{(1)}(\mathbf{m},\mathbf{n})$ is independent of $m_{2,2}$.

The generating function $Z_{\lambda,\mathbf{n}}^{(k)}(\widehat{\mathbf{Y}}_{\vec{s}_0})$ is related to the restricted $N$-sum $N_{\lambda,\mathbf{n}}^{(k)}(q^{-1})$ via a constant term and an evaluation. For any $f\in\mathbb{Z}_u[\widehat{\mathbf{Q}}_0^{\pm1}]((\widehat{\mathbf{Q}}_1^{-1}))$, we define the multiple constant term $\mathrm{CT}_{\{\widehat{Q}_{\alpha,1}\}_{\alpha\in I_r}}(f)$ to be the term of total degree $0$ in each of the variables $\widehat{Q}_{1,1},\cdots,\widehat{Q}_{r,1}$. In particular, if we have a normal-ordered expansion
\begin{equation*}
f=\sum_{\mathbf{a}^{\bullet},\mathbf{b}^{\bullet}\in\mathbb{Z}^s,\mathbf{a}^{\circ},\mathbf{b}^{\circ}\in\mathbb{Z}^d}f_{\mathbf{a}^{\bullet},\mathbf{a}^{\circ},\mathbf{b}^{\bullet},\mathbf{b}^{\circ}}\widehat{Q}_{\bullet,0}^{\mathbf{a}^{\bullet}}\widehat{Q}_{\circ,0}^{\mathbf{a}^{\circ}}\widehat{Q}_{\circ,1}^{\mathbf{b}^{\circ}}\widehat{Q}_{\bullet,1}^{\mathbf{b}^{\bullet}},
\end{equation*}
where $s=|\Pi_{\bullet}|,d=|\Pi_{\circ}|$, then we have
\begin{equation*}
\mathrm{CT}_{\widehat{\mathbf{Q}}_1}(f)=\sum_{\mathbf{a}^{\bullet}\in\mathbb{Z}^s,\mathbf{a}^{\circ}\in\mathbb{Z}^d}f_{\mathbf{a}^{\bullet},\mathbf{a}^{\circ},\mathbf{0},\mathbf{0}}\widehat{Q}_{\bullet,0}^{\mathbf{a}^{\bullet}}\widehat{Q}_{\circ,0}^{\mathbf{a}^{\circ}},
\end{equation*}
where $\widehat{\mathbf{Q}}_k=\{\widehat{Q}_{\alpha,k}\}_{\alpha\in I_r}$ for all $k\in\mathbb{Z}$. Likewise, we define the multiple evaluation of $f$ at $\widehat{Q}_{1,0},\cdots,\widehat{Q}_{r,0}=1$ to be the following Laurent series
\begin{equation*}
f|_{\widehat{\mathbf{Q}}_0=1}=\sum_{\mathbf{a}^{\bullet},\mathbf{b}^{\bullet}\in\mathbb{Z}^s,\mathbf{a}^{\circ},\mathbf{b}^{\circ}\in\mathbb{Z}^d}f_{\mathbf{a}^{\bullet},\mathbf{a}^{\circ},\mathbf{b}^{\bullet},\mathbf{b}^{\circ}}\widehat{Q}_{\circ,1}^{\mathbf{b}^{\circ}}\widehat{Q}_{\bullet,1}^{\mathbf{b}^{\bullet}}.
\end{equation*}
The constant term and evaluation maps commute, and their composition gives:
\begin{equation}\label{eq:4.32}
\mathrm{CT}_{\widehat{\mathbf{Q}}_1}(f)|_{\widehat{\mathbf{Q}}_0=1}=\sum_{\mathbf{a}^{\bullet}\in\mathbb{Z}^s,\mathbf{a}^{\circ}\in\mathbb{Z}^d}f_{\mathbf{a}^{\bullet},\mathbf{a}^{\circ},\mathbf{0},\mathbf{0}}.
\end{equation}
To simplify our notation, we will denote the LHS of equation \eqref{eq:4.32} by $\phi(f)$.

We may now express the $N$-sum in terms of $Z_{\lambda,\mathbf{n}}^{(k)}(\widehat{\mathbf{Y}}_{\vec{s}_0})$ as follows:
\begin{equation}\label{eq:4.33}
N_{\lambda,\mathbf{n}}^{(k)}(q^{-1})
=q^{-\frac{1}{2\delta}L_k(\mathbf{n})}\phi(Z_{\lambda,\mathbf{n}}^{(k)}(\widehat{\mathbf{Y}}_{\vec{s}_0})),
\end{equation} 
where the constant term ensures the condition that $\mathbf{q}_0^{\circ}=\mathbf{0}$ and $\mathbf{q}_0^{\bullet}=\mathbf{0}$, and the result agrees with the definition of the restricted $N$-sum $N_{\lambda,\mathbf{n}}^{(k)}(q^{-1})$, as $Q_k(\mathbf{m},\mathbf{n})=\overline{Q}_k(\mathbf{m},\mathbf{n})-\frac{1}{2\delta}L_k(\mathbf{n})$ when $\mathbf{q}_0^{\circ}=\mathbf{0}$ and $\mathbf{q}_0^{\bullet}=\mathbf{0}$.

\subsection{Factorization properties of the quantum generating functions}

In this subsection, we will state how the quantum generating function $Z_{\lambda,\mathbf{n}}^{(k)}(\widehat{\mathbf{Y}}_{\vec{s}_0})$ fully factorizes (up to a scalar constant dependent on $\mathbf{n}$ and $\lambda$) into a product of the quantum $Q$-system variables $\widehat{Q}_{\alpha,k}$ and their inverses. In order to make our notations more compact, we will denote the product $y_1y_2\cdots y_j$ of variables $y_1,y_2,\ldots,y_j$ by $\prod_{i=1}^j y_i$. Next, we define
\begin{equation}\label{eq:4.34}
\widehat{P}_{\mathbf{v},j}=\prod_{i=1}^j\left(\left(\prod_{j'=1}^{t_0}\widehat{Q}_{\bullet,t_0(i-1)+j'}^{\mathbf{v}_{t_0(i-1)+j'}^{\bullet}}\right)\widehat{Q}_{\circ,i}^{\mathbf{v}_i^{\circ}}\right),
\end{equation}
and $\widehat{P}_{\mathbf{v},0}:=1$ for all vectors $\mathbf{v}=(v_{\alpha,i})_{\alpha\in I_r,i\in\mathbb{N}}$ of integers and all positive integers $j$. 

When $k=1$, we have the following full factorization of $Z_{\lambda,\mathbf{n}}^{(1)}(\widehat{\mathbf{Y}}_{\vec{s}_0})$ into a product of the quantum $Q$-system variables $\widehat{Q}_{\alpha,k}$ and their inverses:

\begin{lemma}\label{4.3}
The quantum generating function $Z_{\lambda,\mathbf{n}}^{(1)}(\widehat{\mathbf{Y}}_{\vec{s}_0})$ can be expressed as a product of $\widehat{Q}_{\alpha,i}$ ($\alpha\in I_r$, $0\leq i\leq t_{\alpha}+1$) as follows:
\begin{equation*}
Z_{\lambda,\mathbf{n}}^{(1)}(\widehat{\mathbf{Y}}_{\vec{s}_0})
=q^{-\frac{1}{2\delta}\sum_{\alpha,\beta\in I_r}\left(\Lambda_{\alpha\alpha}\ell_{\alpha}+2\sum_{i=1}^{t_{\alpha}}\Lambda_{\alpha\beta}n_{\alpha,i}\right)}\widehat{Z}_{\bullet,0}^{-1}\widehat{Z}_{\circ,0}^{-1}\widehat{P}_{\mathbf{n},1}\widehat{Z}_{\circ,1}^{\ell^{\circ}+1}\widehat{Z}_{\bullet,t_0}^{\ell^{\bullet}+1},
\end{equation*}
where $\ell^{\circ}+1$ (respectively $\ell^{\bullet}+1$) is the vector $(\ell_{\alpha}+1)_{\alpha\in\Pi_{\circ}}$ (respectively $(\ell_{\alpha}+1)_{\alpha\in\Pi_{\bullet}}$).
\end{lemma}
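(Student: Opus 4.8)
The plan is to evaluate the sum in \eqref{eq:4.26} directly after specializing all of the data to $k=1$. For $k=1$ the index set $J_{\mathfrak{g}}^{(0,0)}$ consists of the pairs $(\alpha,1)$ with $\alpha\in\Pi_{\circ}$ together with the pairs $(\alpha,i)$ with $\alpha\in\Pi_{\bullet}$ and $1\leq i\leq t_0$, and the modified vacancy numbers collapse considerably: by \eqref{eq:4.7}--\eqref{eq:4.8} one has $q_{\alpha,1}=\ell_{\alpha}$ for $\alpha\in\Pi_{\circ}$ and $q_{\alpha,t_0}=\ell_{\alpha}$ for $\alpha\in\Pi_{\bullet}$, whereas the $q_{\alpha,i}$ with $\alpha\in\Pi_{\bullet}$ and $1\leq i< t_0$ still depend on the summation variables of the ``fundamental layer.'' Accordingly I would carry out the summation in two phases. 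First I would sum over the short-root variables $m_{\alpha,i}$ with $\alpha\in\Pi_{\bullet}$ and $1\leq i\leq t_0-1$, in increasing order of $i$; by the very construction of the intermediate quadratic forms \eqref{eq:4.29}, \eqref{eq:4.31} and the intermediate generating functions \eqref{eq:4.28}, \eqref{eq:4.30}, each such summation rewrites $Z_{\lambda,\mathbf{n}}^{(1)}(\widehat{\mathbf{Y}}_{\vec{s}_0})$ successively as $Z_{\lambda,\mathbf{n}}^{(1,1)}(\widehat{\mathbf{Y}}_{\vec{s}_{0,1}}),\ldots,Z_{\lambda,\mathbf{n}}^{(1,t_0-1)}(\widehat{\mathbf{Y}}_{\vec{s}_{0,t_0-1}})$, up to an explicit monomial in the $\widehat{Q}_{\alpha,i}$'s and a scalar. (For $t_0=2$, i.e.\ types $B_r,C_r,F_4$, this is a single step; for $G_2$, where $t_0=3$ and $\delta=1$, there are two.) Second, in $Z_{\lambda,\mathbf{n}}^{(1,t_0-1)}(\widehat{\mathbf{Y}}_{\vec{s}_{0,t_0-1}})$ the only remaining summation variables are $m_{\alpha,1}$ ($\alpha\in\Pi_{\circ}$) and $m_{\alpha,t_0}$ ($\alpha\in\Pi_{\bullet}$), all of whose vacancy numbers equal $\ell_{\alpha}$, and I would sum these off one at a time.

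Each individual summation is an instance of the $q$-binomial summation used in \cite{DFK14}, transported to the quantum torus: for a single variable $m$ with vacancy number $p$, the sum $\sum_{m\geq 0}q^{\ast}\begin{bmatrix}m+p\\m\end{bmatrix}_q(\text{summand})$ is evaluated by combining the $q$-binomial theorem with the quantum $Q$-system relation in the form \eqref{eq:3.22}, namely $\nu^{-\Lambda_{\alpha\alpha}}\widehat{Q}_{\alpha,k+1}\widehat{Q}_{\alpha,k-1}=\widehat{Q}_{\alpha,k}^2(1-\widehat{Y}_{\alpha,k})$. Concretely, the factors $1-\widehat{Y}_{\alpha,k}$ appearing in the summand are replaced by the monomials $\nu^{-\Lambda_{\alpha\alpha}}\widehat{Q}_{\alpha,k}^{-2}\widehat{Q}_{\alpha,k+1}\widehat{Q}_{\alpha,k-1}$, so that the sum collapses to a single normal-ordered monomial, contributing a factor $\widehat{Z}_{\alpha,\cdot}^{\,\ell_{\alpha}+1}$ on the right (for the fundamental-layer variables) or the relevant $\widehat{Q}_{\alpha,i}^{\,n_{\alpha,i}}$ and $\widehat{Z}$-type factors (for the middle variables). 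The commutation relations of Lemmas \ref{3.6} and \ref{3.7} are used throughout to bring the $\widehat{Q}$'s into the order in which they occur in \eqref{eq:3.22}, and to keep track of the $u=\nu^{1/2}$ factors generated when variables are moved past one another. That only the variables $\widehat{Q}_{\alpha,i}$ with $0\leq i\leq t_{\alpha}+1$ can appear in the answer is forced by the observation that the relation \eqref{eq:3.22} at level $(\alpha,t_{\alpha})$ (resp.\ $(\alpha,1)$) involves $\widehat{Q}_{\alpha,t_{\alpha}+1}$ and $\widehat{Q}_{\alpha,t_{\alpha}-1}$ (resp.\ $\widehat{Q}_{\alpha,0}$).

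Assembling the output of the two phases, the $\mathbf{n}$-dependent factors produced by the successive summations fit together into the ordered product $\widehat{P}_{\mathbf{n},1}$ of \eqref{eq:4.34}, flanked on the left by $\widehat{Z}_{\bullet,0}^{-1}\widehat{Z}_{\circ,0}^{-1}$, which comes from the prefactors $\widehat{Q}_{\bullet,1}^{-\mathbf{q}_0^{\bullet}}\widehat{Q}_{\circ,1}^{-\mathbf{q}_0^{\circ}}\widehat{Q}_{\bullet,0}^{\mathbf{q}_1^{\bullet}}\widehat{Q}_{\circ,0}^{\mathbf{q}_1^{\circ}+D\mathbf{e}_1}$ in \eqref{eq:4.26} once the $m$-linear pieces of the exponents $-\mathbf{q}_0^{\circ},-\mathbf{q}_0^{\bullet}$ are absorbed into the summations and $\mathbf{q}_1^{\circ}=\ell^{\circ}$ is substituted, and on the right by $\widehat{Z}_{\circ,1}^{\,\ell^{\circ}+1}\widehat{Z}_{\bullet,t_0}^{\,\ell^{\bullet}+1}$, the boundary monomials of the fundamental-layer summations. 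The remaining task is to check that all the $q$- and $u$-powers — the explicit quadratic form $\overline{Q}_1(\mathbf{m},\mathbf{n})$ from \eqref{eq:4.27}, the $\nu^{-\Lambda_{\alpha\alpha}}$ and $\nu^{-\delta/2}$ factors carried over from \eqref{eq:3.15}--\eqref{eq:3.22}, the normal-ordering corrections \eqref{eq:3.7}, and the corrections from the commutation relations — combine to exactly the scalar stated in the lemma; here one expects to use $C\Lambda=\delta I$ and $C\Lambda C=\delta C$ repeatedly, exactly as in the proof of Lemma \ref{4.2}.

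The main obstacle is precisely this last bookkeeping: keeping the normal-ordered products in the exact order $\widehat{Z}_{\bullet,0}^{-1}\widehat{Z}_{\circ,0}^{-1}\widehat{P}_{\mathbf{n},1}\widehat{Z}_{\circ,1}^{\,\ell^{\circ}+1}\widehat{Z}_{\bullet,t_0}^{\,\ell^{\bullet}+1}$ — any rearrangement changes the answer by a power of $u$ — and verifying that the accumulated $u$-powers cancel against those hidden inside $\overline{Q}_1(\mathbf{m},\mathbf{n})$. The $G_2$ case requires the extra intermediate step through $Z_{\lambda,\mathbf{n}}^{(1,1)}$ and a bit of care because $|C_{21}|=3$ and $\delta=1$ there, while types $B_r,C_r,F_4$ are uniform. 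A convenient sanity check at the end is the specialization $u\to 1$, which must reproduce the classical factorization of \cite{DFK08}, together with the case $\ell=\mathbf{0}$, $\mathbf{n}=\mathbf{0}$, where the claimed identity reduces to $Z_{0,\mathbf{0}}^{(1)}(\widehat{\mathbf{Y}}_{\vec{s}_0})=\widehat{Z}_{\bullet,0}^{-1}\widehat{Z}_{\circ,0}^{-1}\widehat{Z}_{\circ,1}\widehat{Z}_{\bullet,t_0}$.
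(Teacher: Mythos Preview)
Your proposal is correct and follows essentially the same route as the paper: the paper packages your ``phase one'' (summing the short-root variables $m_{\alpha,i}$ with $1\leq i\leq t_0-1$ to pass from $Z_{\lambda,\mathbf{n}}^{(1)}$ to a monomial times $Z_{\lambda,\mathbf{n}}^{(1,t_0-1)}$) as the separate Lemma~\ref{4.4}, and then carries out your ``phase two'' in the body of the proof, using exactly the $q$-binomial identity of Lemma~\ref{4.7} together with the commutation relations of Lemmas~\ref{3.6}--\ref{3.7} and Corollary~\ref{4.8} to perform the remaining sums and track the $u$-powers. Your identification of the bookkeeping of normal-ordering corrections as the main difficulty matches the paper's emphasis.
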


In order to prove Lemma \ref{4.3}, we will follow the strategy as in the proof of \cite[Lemma 5.3]{DFK08}, with the appropriate modifications in order to take into account the effects of quantum commutativity. As the form of the factorization formulas in Lemma \ref{4.3} suggests, we would need to sum over $m_{\alpha,k}$ for all $\alpha\in I_r$ and $1\leq k\leq t_{\alpha}$, and use the quantum $Q$-system relations to simplify the expressions. We will first sum over $m_{\alpha,p}$ for all $\alpha\in\Pi_{\bullet}$ and $0<p<t_0$, as $q_{\gamma',p}$ depends on $m_{\gamma,1}$ for all $0<p<t_0$. This leads to the following partial factorization of $Z_{\lambda,\mathbf{n}}^{(k)}(\widehat{\mathbf{Y}}_{\vec{s}_0})$ as a product of the quantum $Q$-system variables $\widehat{Q}_{\alpha,k}$ (along with their inverses), and its intermediate generating function $Z_{\lambda,\mathbf{n}}^{(k,p)}(\widehat{\mathbf{Y}}_{\vec{s}_0})$, where $0<p<t_0$:

\begin{lemma}\label{4.4}
The quantum generating function $Z_{\lambda,\mathbf{n}}^{(k)}(\widehat{\mathbf{Y}}_{\vec{s}_0})$ has the following partial factorization in terms of $Z_{\lambda,\mathbf{n}}^{(k,p)}(\widehat{\mathbf{Y}}_{\vec{s}_{0,p}})$ ($0<p<t_0$) and $\widehat{Q}_{\alpha,i}$ ($\alpha\in\Pi_{\bullet}$, $0\leq i\leq p+1$) as follows:
\begin{equation*}
Z_{\lambda,\mathbf{n}}^{(k)}(\mathbf{Y}_{\vec{s}_0})
=q^{-\frac{1}{\delta}\sum_{\alpha,\beta\in\Pi_{\bullet}}\sum_{i=1}^p\Lambda_{\alpha\beta}n_{\alpha,i}}\widehat{Z}_{\bullet,0}^{-1}\left(\widehat{Q}_{\bullet,1}^{\mathbf{n}_1^{\bullet}}\cdots\widehat{Q}_{\bullet,p}^{\mathbf{n}_p^{\bullet}}\right)\widehat{Z}_{\bullet,p} Z_{\lambda,\mathbf{n}}^{(k,p)}(\widehat{\mathbf{Y}}_{\vec{s}_{0,p}}).
\end{equation*}
\end{lemma}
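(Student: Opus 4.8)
## Proof Proposal for Lemma \ref{4.4}

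The plan is to perform the partial summation over the variables $m_{\alpha,i}$ with $\alpha \in \Pi_{\bullet}$ and $1 \leq i \leq p$ (equivalently, $i$ not a multiple of $t_0$ in the range $[1,p]$) directly in the definition \eqref{eq:4.26} of $Z_{\lambda,\mathbf{n}}^{(k)}(\widehat{\mathbf{Y}}_{\vec{s}_0})$, and to recognize the result as the claimed product. First I would isolate, from the quadratic form $\overline{Q}_k(\mathbf{m},\mathbf{n})$ in \eqref{eq:4.27} and from the product of $q$-binomials, exactly the dependence on the summation variables $m_{\alpha,1},\ldots,m_{\alpha,p}$ ($\alpha \in \Pi_{\bullet}$): the key observation (already flagged in the text preceding the lemma) is that $q_{\gamma',i}$ depends on $m_{\gamma,1}$ for $0<i<t_0$, while the modified vacancy numbers $q_{\alpha,i}$ for $i \geq p+1$ and $\alpha \in \Pi_{\circ}$ become independent of these variables once we pass to the intermediate quantities $q_{\alpha,i}^{(p)}$ of \eqref{eq:4.10}. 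So I would rewrite $\overline{Q}_k$ as $\overline{Q}_k^{(p)}$ (the intermediate quadratic form, defined analogously to \eqref{eq:4.29}/\eqref{eq:4.31}) plus the terms involving $m_{\alpha,1},\ldots,m_{\alpha,p}$, and then carry out the sum.

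The heart of the computation is a telescoping/quantum-binomial identity: summing a product $\begin{bmatrix}m+q\\m\end{bmatrix}_q$-type factor against the appropriate power of $q$ and the appropriate monomial in the $\widehat{Q}_{\alpha,i}$'s should collapse, via the quantum $Q$-system relation \eqref{eq:3.22} in the form $\nu^{-\Lambda_{\alpha\alpha}}\widehat{Q}_{\alpha,k+1}\widehat{Q}_{\alpha,k-1} = \widehat{Q}_{\alpha,k}^2(1-\widehat{Y}_{\alpha,k})$, into the telescoping product $\widehat{Q}_{\bullet,1}^{\mathbf{n}_1^{\bullet}}\cdots\widehat{Q}_{\bullet,p}^{\mathbf{n}_p^{\bullet}}$ together with the edge factors $\widehat{Z}_{\bullet,0}^{-1}$ and $\widehat{Z}_{\bullet,p}$. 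Concretely I would proceed by induction on $p$ from $p=1$ up to $p = t_0-1$: at each step, summing over $m_{\alpha,p}$ ($\alpha \in \Pi_{\bullet}$) turns $\widehat{Z}_{\bullet,p-1}$-type data into $\widehat{Z}_{\bullet,p}$-type data, producing one new factor $\widehat{Q}_{\bullet,p}^{\mathbf{n}_p^{\bullet}}$ and shifting the generating function from $Z^{(k,p-1)}_{\lambda,\mathbf{n}}$ to $Z^{(k,p)}_{\lambda,\mathbf{n}}$. The commutations needed to reorder the emerging monomials into the stated normal-ordered form are supplied by Lemma \ref{3.6}, and the relevant "these two things commute" facts—that $\widehat{Q}_{\beta,i}$ commutes with the relevant $\widehat{Z}$-ratios and with $\widehat{Y}_{\alpha,t_0i+p}$—are exactly items (2), (5), (8) of Lemma \ref{3.7}. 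The scalar prefactor $q^{-\frac{1}{\delta}\sum_{\alpha,\beta\in\Pi_{\bullet}}\sum_{i=1}^p \Lambda_{\alpha\beta} n_{\alpha,i}}$ is accumulated from the $\nu^{-\Lambda_{\alpha\alpha}}$ factors in \eqref{eq:3.22} and from the ordered-product normalization \eqref{eq:3.7} at each summation step, and I would track it by bookkeeping the exponents of $\nu$ using the block form \eqref{eq:4.12} of $C$ and $\Lambda$ together with $C\Lambda = \delta I$.

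The main obstacle I anticipate is \emph{not} the combinatorial telescoping—that is essentially the $q$-deformed version of the argument in \cite[Lemma 5.3]{DFK08}—but rather keeping the noncommutative bookkeeping honest: one must verify that at each stage of the summation the monomial being summed against actually lies in a single quantum cluster (so that the commutation relations of Lemma \ref{3.6} apply and the ordered product $\normord{\cdot}$ is well-defined), and that the half-integer powers of $\nu$ generated by reordering recombine exactly into the stated power of $q = \nu^{\delta}$ with no leftover $u = \nu^{1/2}$ ambiguity. This requires checking that the relevant indices $(\alpha,i),(\beta,j)$ satisfy the inequality \eqref{eq:3.12}, which for $\alpha,\beta \in \Pi_{\bullet}$ and $i,j$ in the window $[0,p+1]$ with $p < t_0$ reduces to $|i-j| \leq t_0 = t_0 + t_0 - t_0$, and is therefore satisfied throughout the summation range. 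Once this is in place, the identity follows by matching the exponent of each $\widehat{Q}_{\alpha,i}$ and the overall scalar on both sides, completing the proof.
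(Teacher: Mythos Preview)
Your proposal is correct and takes essentially the same approach as the paper's proof in Appendix~\ref{Appendix A}: sum over $m_{\alpha,i}$ for $\alpha\in\Pi_{\bullet}$ one step $i=1,\ldots,p$ at a time using the $q$-binomial identity (Lemma~\ref{4.7} together with Corollary~\ref{4.8}), then reorder via Lemmas~\ref{3.6} and~\ref{3.7}. The paper carries this out case by case (types $BCF$ with $p=1$; type $G$ with $p=1$ then $p=2$) rather than phrasing it as a formal induction on $p$, and the scalar prefactor is produced at the final step by commuting $\widehat{Z}_{\bullet,i-1}^{-1}$ to the left of $\widehat{Q}_{\bullet,i}^{\mathbf{n}_i^{\bullet}}$ via Lemma~\ref{3.6}.
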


Subsequently, we will then sum over $m_{\alpha,t_{\alpha}}$ for all $\alpha\in I_r$ to obtain the full factorization formulas in Lemma \ref{4.3}. As the proofs of Lemmas \ref{4.3} and \ref{4.4} are by and large similar, we will only prove Lemma \ref{4.3} in this subsection, leaving the details of the proof of Lemma \ref{4.4} to Appendix \ref{Appendix A}. In order to prove Lemma \ref{4.3}, we will first need to compute $\widehat{Y}_{\alpha,t_{\alpha}}$ explicitly for all $\alpha\in I_r$. It is easy to see from Lemma \ref{3.5} that we have
\begin{equation}\label{eq:4.35}
\widehat{Y}_{\alpha,t_{\alpha}}=\prod_{\beta\in I_r}\widehat{Q}_{\beta,t_{\beta}}^{-C_{\alpha\beta}}.
\end{equation}

Next, we need the following technical lemmas that will be helpful in proving Lemmas \ref{4.3} and \ref{4.4}:

\begin{lemma}\label{4.5}
For any $\alpha\in I_r$ and $i\in\mathbb{Z}$, we have
\begin{equation*}
\widehat{Z}_{\alpha,i}(1-\widehat{Y}_{\alpha,i+1})^{-1}=\widehat{Z}_{\alpha,i+1}.
\end{equation*}
\end{lemma}

\begin{proof}
By Lemma \ref{3.5} and \eqref{eq:3.22}, we have
\begin{equation*}
1-\widehat{Y}_{\alpha,i+1}
=\nu^{-\Lambda_{\alpha\alpha}}\widehat{Q}_{\alpha,i+1}^{-2}\widehat{Q}_{\alpha,i+2}\widehat{Q}_{\alpha,i}
=\widehat{Z}_{\alpha,i+1}^{-1}\widehat{Z}_{\alpha,i},
\end{equation*}
or equivalently, $(1-\widehat{Y}_{\alpha,i+1})^{-1}=\widehat{Z}_{\alpha,i}^{-1}\widehat{Z}_{\alpha,i+1}$. The desired statement now follows.
\end{proof}

\begin{lemma}\label{4.6}
For any $\alpha\in I_r$ and $i,b\in\mathbb{Z}$, we have 
\begin{equation*}
\sum_{a\geq0}\begin{bmatrix}a+b\\a\end{bmatrix}_q\widehat{Y}_{\alpha,i+1}^a\widehat{Z}_{\alpha,i}^b=\widehat{Z}_{\alpha,i}^{-1}\widehat{Z}_{\alpha,i+1}^{b+1}.
\end{equation*}
\end{lemma}

\begin{proof}
Firstly, we note that 
\begin{equation*}
\sum_{a=0}^{\infty}\begin{bmatrix}a+b\\a\end{bmatrix}_vx^a=\frac{(v^{b+1}x;v)_{\infty}}{(x;v)_{\infty}}
=
\begin{cases}
\prod_{i=0}^b(1-v^ix)^{-1} & \text{if }b\geq0\\
\prod_{i=0}^{-b-1}(1-v^{i+b+1}x) & \text{if }b<0.
\end{cases}
\end{equation*}
This implies that for any integer $b$ and any two variables $x,y$ satisfying the quasi-commutation relation $yx=vxy$, we have
\begin{equation*}
\sum_{a\geq0}\begin{bmatrix}a+b\\a\end{bmatrix}_vx^ay^b=y^{-1}(y(1-x)^{-1})^{b+1},
\end{equation*}
where the right hand side is considered as a formal power series in the variable $x$. 

Now, it follows from Lemma \ref{3.5} that we have $\widehat{Z}_{\alpha,i}\widehat{Y}_{\alpha,i+1}=\nu^{\delta}\widehat{Y}_{\alpha,i+1}\widehat{Z}_{\alpha,i}=q\widehat{Y}_{\alpha,i+1}\widehat{Z}_{\alpha,i}$. In particular, by setting $v=q$, $x=\widehat{Y}_{\alpha,i+1}$ and $y=\widehat{Z}_{\alpha,i}$, we see that the desired statement follows from the above equation, along with Lemma \ref{4.5}.
\end{proof}

\begin{proof}[Proof of Lemma \ref{4.3}]
For convenience, let us set $p=t_0-1$. When $k=1$, we see that $q_{\alpha,t_{\alpha}}=\ell_{\alpha}$ for all $\alpha\in I_r$. Thus, it follows from equations \eqref{eq:4.8} and \eqref{eq:4.11} that we have
\begin{align}
\mathbf{q}_p^{\bullet}
&=\ell^{\bullet}+C^{\bullet}\mathbf{m}_{t_0}^{\bullet}-\mathbf{n}_{t_0}^{\bullet}+D^t\mathbf{m}_1^{\circ},\label{eq:4.36}\\
\mathbf{q}_0^{\circ,(p)}
&=\ell^{\circ}+C^{\circ}\mathbf{m}_1^{\circ}-\mathbf{n}_1^{\circ}+t_0D\mathbf{m}_{t_0}^{\bullet}.\label{eq:4.37}
\end{align}
Next, we have
\begin{equation}\label{eq:4.38}
\overline{Q}^{(p)}(\mathbf{m},\mathbf{n})=\frac{1}{2\delta}\left(\ell^{\circ}\cdot\Lambda^{\circ}\ell^{\circ}+\ell^{\bullet}\cdot\Lambda^{\bullet}\ell^{\bullet}+2\ell^{\bullet}\cdot A^t\ell^{\circ}\right).
\end{equation}
We now use Lemmas \ref{3.5}, along with equations \eqref{eq:4.35}, \eqref{eq:4.36} and \eqref{eq:4.37} to rearrange the terms involving the $\widehat{Q}_{\alpha,i}$'s in the quantum generating function $Z_{\lambda,\mathbf{n}}^{(1,p)}(\widehat{\mathbf{Y}}_{\vec{s}_{0,p}})$ as follows:
\begin{equation}\label{eq:4.39}
\widehat{Q}_{\bullet,t_0}^{-\mathbf{q}_p^{\bullet}}\widehat{Q}_{\circ,1}^{-\mathbf{q}_0^{\circ,(p)}}\widehat{Q}_{\bullet,p}^{\mathbf{q}_{t_0}^{\bullet}}\widehat{Q}_{\circ,0}^{\mathbf{q}_1^{\circ}}
=\widehat{Q}_{\bullet,t_0}^{-\mathbf{q}_p^{\bullet}}\widehat{Q}_{\circ,1}^{-\mathbf{q}_0^{\circ,(p)}}\widehat{Q}_{\bullet,p}^{\ell^{\bullet}}\widehat{Q}_{\circ,0}^{\ell^{\circ}}
=\widehat{Q}_{\bullet,1}^{\mathbf{n}_{t_0}^{\bullet}}\widehat{Q}_{\circ,1}^{\mathbf{n}_1^{\circ}}\widehat{Y}_{\bullet,t_0}^{\mathbf{m}_{t_0}^{\bullet}}\widehat{Y}_{\circ,1}^{\mathbf{m}_1^{\circ}}\widehat{Q}_{\bullet,t_0}^{-\ell^{\bullet}}\widehat{Q}_{\circ,1}^{-\ell^{\circ}}\widehat{Q}_{\bullet,p}^{\ell^{\bullet}}\widehat{Q}_{\circ,0}^{\ell^{\circ}}.
\end{equation}
Next, we let $\Pi_{\bullet}=\{\alpha_1,\alpha_2,\cdots,\alpha_s\}$ and $\Pi_{\circ}=\{\beta_1,\beta_2,\cdots,\beta_d\}$. We use Lemma \ref{3.5} and equation \eqref{eq:4.38} to deduce that
\begin{align}\label{eq:4.40}
\widehat{Q}_{\bullet,t_0}^{-\ell^{\bullet}}\widehat{Q}_{\circ,1}^{-\ell^{\circ}}\widehat{Q}_{\bullet,p}^{\ell^{\bullet}}\widehat{Q}_{\circ,0}^{\ell^{\circ}}
&=q^{-\overline{Q}^{(p)}(\mathbf{m},\mathbf{n})-\frac{1}{2\delta}\sum_{\alpha\in I_r}\Lambda_{\alpha\alpha}\ell_{\alpha}^2}\prod_{i=1}^s\left(\widehat{Q}_{\alpha_i,p}^{\ell_{\alpha_i}}\widehat{Q}_{\alpha_i,t_0}^{-\ell_{\alpha_i}}\right)\prod_{j=1}^d\left(\widehat{Q}_{\beta_j,0}^{\ell_{\beta_j}}\widehat{Q}_{\beta_j,1}^{-\ell_{\beta_j}}\right)\nonumber\\
&=q^{-\overline{Q}^{(p)}(\mathbf{m},\mathbf{n})-\frac{1}{2\delta}\sum_{\alpha\in I_r}\Lambda_{\alpha\alpha}\ell_{\alpha}}\widehat{Z}_{\bullet,p}^{\ell^{\bullet}}\widehat{Z}_{\circ,0}^{\ell^{\circ}}.
\end{align}
Finally, we use Lemma \ref{3.6}(6) and (7) to deduce that
\begin{equation}\label{eq:4.41}
\widehat{Y}_{\bullet,t_0}^{\mathbf{m}_{t_0}^{\bullet}}\widehat{Y}_{\circ,1}^{\mathbf{m}_1^{\circ}}\widehat{Z}_{\bullet,p}^{\ell^{\bullet}}\widehat{Z}_{\circ,0}^{\ell^{\circ}}
=\prod_{i=1}^s\left(\widehat{Y}_{\alpha_i,t_0}^{m_{\alpha_i,t_0}}\widehat{Z}_{\alpha_i,p}^{\ell_{\alpha_i}}\right)\prod_{j=1}^d\left(\widehat{Y}_{\beta_j,1}^{m_{\beta_j,1}}\widehat{Z}_{\beta_j,0}^{\ell_{\beta_j}}\right).
\end{equation}
We now use equations \eqref{eq:4.39}-\eqref{eq:4.41} to deduce that 
\begin{align}
Z_{\lambda,\mathbf{n}}^{(1,p)}(\widehat{\mathbf{Y}}_{\vec{s}_{0,p}})
&=q^{-\frac{1}{2\delta}\sum_{\alpha\in I_r}\Lambda_{\alpha\alpha}\ell_{\alpha}}\widehat{Q}_{\bullet,1}^{\mathbf{n}_{t_0}^{\bullet}}\widehat{Q}_{\circ,1}^{\mathbf{n}_1^{\circ}}\left(\prod_{i=1}^s\sum_{m_{\alpha_i,t_0}\geq0}\begin{bmatrix}m_{\alpha_i,t_0}+\ell_{\alpha_i}\\m_{\alpha_i,t_0}\end{bmatrix}_q\widehat{Y}_{\alpha_i,t_0}^{m_{\alpha_i,t_0}}\widehat{Z}_{\alpha_i,p}^{\ell_{\alpha_i}}\right)\label{eq:4.42}\\
&\qquad\times\left(\prod_{j=1}^d\sum_{m_{\beta_j,1}\geq0}\begin{bmatrix}m_{\beta_j,1}+\ell_{\beta_j}\\m_{\beta_j,1}\end{bmatrix}_q\widehat{Y}_{\beta_j,1}^{m_{\beta_j,1}}\widehat{Z}_{\beta_j,0}^{\ell_{\beta_j}}\right).\nonumber
\end{align}
As $q^{-\frac{1}{2\delta}\sum_{\alpha\in I_r}\Lambda_{\alpha\alpha}\ell_{\alpha}}$ is a constant, we may sum over each $m_{\alpha_i,t_0}$ and $m_{\beta_j,1}$ for all $i=1,\cdots,s$ and $j=1,\cdots,d$, and apply Lemmas \ref{3.5}, \ref{3.6}(1), (2) and \ref{4.6}, to rewrite the RHS of equation \eqref{eq:4.42} as 
\begin{align}\label{eq:4.43}
&\qquad q^{-\frac{1}{2\delta}\sum_{\alpha\in I_r}\Lambda_{\alpha\alpha}\ell_{\alpha}}\widehat{Q}_{\bullet,1}^{\mathbf{n}_{t_0}^{\bullet}}\widehat{Q}_{\circ,1}^{\mathbf{n}_1^{\circ}}\prod_{i=1}^s\left(\widehat{Z}_{\alpha_i,p}^{-1}\widehat{Z}_{\alpha_i,t_0}^{\ell_{\alpha_i}+1}\right)\prod_{j=1}^d\left(\widehat{Z}_{\beta_j,0}^{-1}\widehat{Z}_{\beta_j,1}^{\ell_{\beta_j}+1}\right)\nonumber\\
&=q^{-\frac{1}{2\delta}\sum_{\alpha\in I_r}\Lambda_{\alpha\alpha}\ell_{\alpha}}\widehat{Q}_{\bullet,1}^{\mathbf{n}_{t_0}^{\bullet}}\widehat{Q}_{\circ,1}^{\mathbf{n}_1^{\circ}}\widehat{Z}_{\bullet,p}^{-1}\widehat{Z}_{\circ,0}^{-1}\widehat{Z}_{\circ,1}^{\ell^{\circ}+1}\widehat{Z}_{\bullet,1}^{\ell^{\bullet}+1}\nonumber\\
&=q^{-\frac{1}{2\delta}\sum_{\alpha,\beta\in I_r}\left(\Lambda_{\alpha\alpha}\ell_{\alpha}+2\Lambda_{\alpha\beta}n_{\alpha,t_{\alpha}}\right)}\widehat{Z}_{\bullet,p}^{-1}\widehat{Z}_{\circ,0}^{-1}\widehat{Q}_{\bullet,1}^{\mathbf{n}_{t_0}^{\bullet}}\widehat{Q}_{\circ,1}^{\mathbf{n}_1^{\circ}}\widehat{Z}_{\circ,1}^{\ell^{\circ}+1}\widehat{Z}_{\bullet,1}^{\ell^{\bullet}+1}.
\end{align}
By Lemma \ref{4.4} and equation \eqref{eq:4.43}, it follows that we have
\begin{align*}
&\qquad Z_{\lambda,\mathbf{n}}^{(1)}(\widehat{\mathbf{Y}}_{\vec{s}_0})\\
&=q^{-\frac{1}{2\delta}\sum_{\alpha,\beta\in I_r}\left(\Lambda_{\alpha\alpha}\ell_{\alpha}+2\Lambda_{\alpha\beta}n_{\alpha,t_{\alpha}}\right)-\frac{1}{\delta}\sum_{\alpha,\beta\in\Pi_{\bullet}}\sum_{i=1}^{p}\Lambda_{\alpha,\beta}n_{\alpha,i}}\widehat{Z}_{\bullet,0}^{-1}\left(\prod_{i=1}^p\widehat{Q}_{\bullet,i}^{\mathbf{n}_i^{\bullet}}\right)\widehat{Z}_{\circ,0}^{-1}\widehat{Q}_{\bullet,1}^{\mathbf{n}_{t_0}^{\bullet}}\widehat{Q}_{\circ,1}^{\mathbf{n}_1^{\circ}}\widehat{Z}_{\circ,1}^{\ell^{\circ}+1}\widehat{Z}_{\bullet,1}^{\ell^{\bullet}+1}.
\end{align*}
By commuting $\widehat{Z}_{\circ,0}^{-1}$ to the immediate left of $\left(\prod_{i=1}^p\widehat{Q}_{\bullet,i}^{\mathbf{n}_i^{\bullet}}\right)$ using Lemma \ref{3.5}, we obtain the remaining factor of $q^{-\frac{1}{\delta}\sum_{\alpha\in\Pi_{\bullet},\beta\in\Pi_{\circ}}\sum_{i=1}^p\Lambda_{\alpha\beta}n_{\alpha,i}}$, and Lemma \ref{4.3} follows.
\end{proof}

In the general $k>1$ case, we have:

\begin{lemma}\label{4.7}
For $k>1$, the quantum generating function $Z_{\lambda,\mathbf{n}}^{(k)}(\widehat{\mathbf{Y}}_{\vec{s}_0})$ can be expressed as a product of $\widehat{Q}_{\alpha,i}$ ($\alpha\in I_r$, $0\leq i\leq t_{\alpha}+1$) and $Z_{\lambda,\mathbf{n}^{(1,0)}}^{(k-1)}(\widehat{\mathbf{Y}}_{\vec{s}_1})$ as follows:
\begin{equation*}
Z_{\lambda,\mathbf{n}}^{(1)}(\widehat{\mathbf{Y}}_{\vec{s}_0})
=q^{-\frac{1}{\delta}\sum_{\alpha,\beta\in I_r}\sum_{i=1}^{t_{\alpha}}\Lambda_{\alpha\beta}n_{\alpha,i}}\widehat{Z}_{\bullet,0}^{-1}\widehat{Z}_{\circ,0}^{-1}\widehat{P}_{\mathbf{n},1}\widehat{Z}_{\circ,1}\widehat{Z}_{\bullet,t_0}Z_{\lambda,\mathbf{n}^{(1)}}^{(k-1)}(\widehat{\mathbf{Y}}_{\vec{s}_1}).
\end{equation*}
\end{lemma}

The proof proceeds in a similar fashion as that of Lemma \ref{4.3}, which we will describe in detail in Appendix \ref{Appendix B}.

Writing the solution of the quantum $Q$-system as $\widehat{Q}_{\alpha,n}(\widehat{\mathbf{Y}}_{\vec{s}_j})$ to display its dependence on the initial data $\widehat{\mathbf{Y}}_{\vec{s}_j}$, we shall now use the following translational invariance property of the quantum $Q$-system:

\begin{lemma}\label{4.8}
For any solution $\widehat{Q}_{\alpha,n}(\widehat{\mathbf{Y}}_{\vec{0}})$ of the quantum $Q$-system \eqref{eq:3.20}, we have
\begin{equation*}
\widehat{Q}_{\alpha,n}(\widehat{\mathbf{Y}}_{\vec{s}_j})=\widehat{Q}_{\alpha,n+t_{\alpha}j}(\widehat{\mathbf{Y}}_{\vec{s}_0})
\end{equation*}
for all $\alpha\in I_r$, $n\in\mathbb{Z}$ and $j\in\mathbb{Z}_+$.
\end{lemma} 

\begin{proof}
Firstly, we would like to mention that Lemma \ref{4.8} is proved for the simply-laced case \cite[Lemma 5.5]{DFK14}. We will focus on the case where $\mathfrak{g}$ is of type $BCF$, with the arguments for the case where $\mathfrak{g}$ is of type $G_2$ being similar. In this case, we have $t_{\alpha}=1$ if $\alpha\in\Pi_{\circ}$ and $t_{\alpha}=2$ if $\alpha\in\Pi_{\bullet}$. We will prove by induction on $k\in\mathbb{Z}_+$ that the above equation holds for all $\alpha\in I_r$ and $n\in\mathbb{Z}_+$ satisfying $n\leq t_{\alpha}k+1$, with the base case $k=0$ following immediately from the definition of $\widehat{\mathbf{Y}}_{\vec{s}_j}$ and $\widehat{Q}_{\alpha,n}(\widehat{\mathbf{Y}}_{\vec{s}_j})$. 

Suppose that the statement holds for $k=m$, where $m\in\mathbb{Z}_+$. We would like to show that the statement holds for $k=m+1$ as well. By induction hypothesis, we have 
\begin{equation}\label{eq:4.44}
\widehat{Q}_{\alpha,m}(\widehat{\mathbf{Y}}_{\vec{s}_j})
=\widehat{Q}_{\alpha,m+j}(\widehat{\mathbf{Y}}_{\vec{s}_0}),\quad
\widehat{Q}_{\alpha,m+1}(\widehat{\mathbf{Y}}_{\vec{s}_j})
=\widehat{Q}_{\alpha,m+j+1}(\widehat{\mathbf{Y}}_{\vec{s}_0})
\end{equation}
for all $\alpha\in\Pi_{\circ}$, and 
\begin{equation}\label{eq:4.45}
\widehat{Q}_{\alpha,2m}(\widehat{\mathbf{Y}}_{\vec{s}_j})
=\widehat{Q}_{\alpha,2m+2j}(\widehat{\mathbf{Y}}_{\vec{s}_0}),\quad
\widehat{Q}_{\alpha,2m+1}(\widehat{\mathbf{Y}}_{\vec{s}_j})
=\widehat{Q}_{\alpha,2m+2j+1}(\widehat{\mathbf{Y}}_{\vec{s}_0})
\end{equation}
for all $\alpha\in\Pi_{\bullet}$. Let us first pick some $\alpha_{\bullet}\in\Pi_{\bullet}$. We let $\Pi_{\alpha_{\bullet}}=\{\beta\in\Pi_{\bullet}:\beta\sim\alpha_{\bullet}\}$. By \eqref{eq:3.21}, we have 
\begin{equation}\label{eq:4.46}
\widehat{Y}_{\alpha_{\bullet},2m+1}(\widehat{\mathbf{Y}}_{\vec{s}_j})
=\widehat{Q}_{\alpha_{\bullet},2m+1}(\widehat{\mathbf{Y}}_{\vec{s}_j})^{-2}\normord{\widehat{Q}_{\gamma,m}(\widehat{\mathbf{Y}}_{\vec{s}_j})^{\delta_{\alpha_{\bullet},\gamma'}}\widehat{Q}_{\gamma,m+1}(\widehat{\mathbf{Y}}_{\vec{s}_j})^{\delta_{\alpha_{\bullet},\gamma'}}\prod_{\beta\in\Pi_{\alpha_{\bullet}}}\widehat{Q}_{\beta,2m+1}(\widehat{\mathbf{Y}}_{\vec{s}_j})}.
\end{equation}
By combining \eqref{eq:3.21}-\eqref{eq:3.22} and \eqref{eq:4.45}-\eqref{eq:4.46}, we deduce that
\begin{equation}\label{eq:4.47}
\widehat{Q}_{\alpha,2m+2}(\widehat{\mathbf{Y}}_{\vec{s}_j})
=\widehat{Q}_{\alpha,2m+2j+2}(\widehat{\mathbf{Y}}_{\vec{s}_0}).
\end{equation}
Likewise, by observing that
\begin{equation*}
\widehat{Y}_{\alpha_{\bullet},2m+2}(\widehat{\mathbf{Y}}_{\vec{s}_j})
=\widehat{Q}_{\alpha_{\bullet},2m+2}(\widehat{\mathbf{Y}}_{\vec{s}_j})^{-2}\normord{\widehat{Q}_{\gamma,m+1}(\widehat{\mathbf{Y}}_{\vec{s}_j})^{2\delta_{\alpha_{\bullet},\gamma'}}\prod_{\beta\in\Pi_{\alpha_{\bullet}}}\widehat{Q}_{\beta,2m+2}(\widehat{\mathbf{Y}}_{\vec{s}_j})},
\end{equation*}
we may deduce in a similar fashion as before that we have $\widehat{Q}_{\alpha,2m+3}(\widehat{\mathbf{Y}}_{\vec{s}_j})
=\widehat{Q}_{\alpha,2m+2j+3}(\widehat{\mathbf{Y}}_{\vec{s}_0})$.

Next, let us pick some $\alpha_{\circ}\in\Pi_{\circ}$. We let $\Pi_{\alpha_{\circ}}=\{\beta\in\Pi_{\circ}:\beta\sim\alpha_{\circ}\}$. By \eqref{eq:3.21}, we have 
\begin{equation}\label{eq:4.48}
\widehat{Y}_{\alpha_{\circ},m+1}(\widehat{\mathbf{Y}}_{\vec{s}_j})
=\widehat{Q}_{\alpha_{\bullet},m+1}(\widehat{\mathbf{Y}}_{\vec{s}_j})^{-2}\normord{\widehat{Q}_{\gamma',2m+2}(\widehat{\mathbf{Y}}_{\vec{s}_j})^{\delta_{\alpha_{\circ},\gamma}}\prod_{\beta\in\Pi_{\alpha_{\circ}}}\widehat{Q}_{\beta,m+1}(\widehat{\mathbf{Y}}_{\vec{s}_j})}.
\end{equation}
By combining \eqref{eq:3.21}-\eqref{eq:3.22}, \eqref{eq:4.44} and \eqref{eq:4.48}, we deduce that
\begin{equation*}
\widehat{Q}_{\alpha,m+2}(\widehat{\mathbf{Y}}_{\vec{s}_j})
=\widehat{Q}_{\alpha,m+j+2}(\widehat{\mathbf{Y}}_{\vec{s}_0}).
\end{equation*}
This completes the induction step, and we are done.
\end{proof}

Using the translational invariance property of the quantum $Q$-system, along with Lemmas \ref{4.3} and \ref{4.7}, we get

\begin{theorem}\label{4.9}
\begin{equation*}
Z_{\lambda,\mathbf{n}}^{(k)}(\widehat{\mathbf{Y}}_{\vec{s}_0})
=q^{-\frac{1}{2\delta}\sum_{\alpha,\beta\in I_r}\left(\Lambda_{\alpha\alpha}\ell_{\alpha}+2\sum_{i=1}^{t_{\alpha}k}\Lambda_{\alpha\beta}n_{\alpha,i}\right)}\widehat{Z}_{\bullet,0}^{-1}\widehat{Z}_{\circ,0}^{-1}\widehat{P}_{\mathbf{n},k}\widehat{Z}_{\circ,k}^{\ell^{\circ}+1}\widehat{Z}_{\bullet,t_0k}^{\ell^{\bullet}+1}.
\end{equation*}
\end{theorem}

As an immediate consequence of Lemmas \ref{4.7}, \ref{4.8} and Theorem \ref{4.9}, we have:

\begin{corollary}\label{4.10}
For all $0<j<k$, we have
\begin{equation*}
Z_{\lambda,\mathbf{n}}^{(k)}(\widehat{\mathbf{Y}}_{\vec{s}_0})=Z_{\mathbf{0},\mathbf{n}}^{(j)}(\widehat{\mathbf{Y}}_{\vec{s}_0})Z_{\lambda,\mathbf{n}^{(j)}}^{(k-j)}(\widehat{\mathbf{Y}}_{\vec{s}_j}).
\end{equation*}
\end{corollary}

As an immediate consequence of Corollary \ref{4.10}, along with Lemmas \ref{4.3} and \ref{4.4}, we have:

\begin{corollary}\label{4.11}
For all $0\leq j\leq k$, and $0<p<t_0$, we have
\begin{equation*}
Z_{\lambda,\mathbf{n}}^{(k)}(\widehat{\mathbf{Y}}_{\vec{s}_0})
=q^{-\frac{1}{\delta}\sum_{\alpha,\beta\in I_r}\sum_{i=1}^{t_{\alpha}j}\Lambda_{\alpha\beta}n_{\alpha,i}}\widehat{Z}_{\bullet,0}^{-1}\widehat{Z}_{\circ,0}^{-1}\widehat{P}_{\mathbf{n},j}\widehat{Z}_{\circ,j}\widehat{Z}_{\bullet,t_0j}Z_{\lambda,\mathbf{n}^{(j)}}^{(k-j)}(\widehat{\mathbf{Y}}_{\vec{s}_j}),
\end{equation*}
and
\begin{align*}
Z_{\lambda,\mathbf{n}}^{(k)}(\widehat{\mathbf{Y}}_{\vec{s}_0})
&=q^{-\frac{1}{\delta}\sum_{\beta\in I_r}\left(\sum_{\alpha\in\Pi_{\circ}}\sum_{i=1}^j\Lambda_{\alpha\beta}n_{\alpha,i}+\sum_{\omega\in\Pi_{\bullet}}\sum_{i=1}^{t_0j+p}\Lambda_{\omega\beta}n_{\omega,i}\right)}\\
&\quad\times\widehat{Z}_{\bullet,0}^{-1}\widehat{Z}_{\circ,0}^{-1}\widehat{P}_{\mathbf{n},j}\widehat{Z}_{\circ,j}\widehat{Z}_{\bullet,t_0j+p}Z_{\mathbf{\ell},\mathbf{n}^{(j)}}^{(k-j,p)}(\widehat{\mathbf{Y}}_{\vec{s}_{j,p}}).
\end{align*}
\end{corollary}

\subsection{Proof of Theorem \ref{4.1}}

We are now ready to prove Theorem \ref{4.1}. To this end, we need a few auxiliary lemmas from \cite[Section 5.5]{DFK14}. We will omit the proofs of these lemmas as they follow immediately from the proofs of the analogous lemmas in \cite[Section 5.5]{DFK14} with minor modifications. We will let $A$ denote the ring $\mathbb{Z}_u[\widehat{\mathbf{Q}}_1,\widehat{\mathbf{Q}}_{-1},\widehat{\mathbf{Q}}_0^{\pm1}]$, and $A_{\alpha}$ denote the ring $\mathbb{Z}_u[\{\widehat{Q}_{\beta,\pm1}\}_{\beta\neq\alpha},\widehat{\mathbf{Q}}_0^{\pm1}]$ for all $\alpha\in I_r$.

\begin{lemma}[Lemma 5.9, \cite{DFK14}]\label{4.12}
Let $\alpha_1,\cdots,\alpha_n\in I_r$, $i_1,\cdots,i_n\in\mathbb{Z}$ and $m_1,\cdots,m_n\in\mathbb{Z}$. Then $\prod_{j=1}^n\widehat{Q}_{\alpha_j,i_j}^{m_j}\in A$.
\end{lemma}

The proof of \cite[Lemma 5.9]{DFK14} would still be applicable here, since $\widehat{Q}_{\alpha,1}$ and $\widehat{Q}_{\beta,-1}$ are on the same quantum torus for all distinct $\alpha,\beta\in I_r$ by Lemma \ref{3.5}.

\begin{lemma}[Lemma 5.12, \cite{DFK14}]\label{4.13}
For any $\beta\in I_r$ and $f\in\mathbb{Z}_q[\widehat{\mathbf{Q}}_0^{\pm1}]((\widehat{\mathbf{Q}}_1^{-1}))$, we have  
\begin{equation*}
\left(\widehat{Z}_{\bullet,0}^{-1}\widehat{Z}_{\circ,0}^{-1}\widehat{Q}_{\beta,-1}\times f\right)\biggl|_{\widehat{\mathbf{Q}}_0=1}=0.
\end{equation*}
\end{lemma}

\begin{lemma}[Lemma 5.14, \cite{DFK14}]\label{4.14}
For all $\alpha\in I_r$ and $n\in\mathbb{Z}_+$, we have $\widehat{Q}_{\alpha,n}^{-1}\in A_{\alpha}((\widehat{Q}_{\alpha,1}^{-1}))$.
\end{lemma}

\begin{proof}[Proof of Theorem \ref{4.1}]
We shall prove by induction on $j=k,\cdots,0$ that the sum in equation \eqref{eq:4.5} is unchanged if we restrict the sum to sets of vectors $\mathbf{m}$ of non-negative integers such that $q_{\alpha,i}\geq0$ for all $\alpha\in I_r$ and $i\geq t_{\alpha}j$. The base case $j=k$ holds since we have $q_{\alpha,t_{\alpha}k}=\ell_{\alpha}\geq0$ for all $\alpha\in I_r$. Next, let us assume that the statement holds for $j$, where $j\geq1$. Let $p=t_0-1$. By Corollary \ref{4.11}, we have 
\begin{align}\label{eq:4.49}
&\quad Z_{\lambda,\mathbf{n}}^{(k)}(\widehat{\mathbf{Y}}_{\vec{s}_0})\\
&=q^{-\frac{1}{\delta}\sum_{\beta\in I_r}\left(\sum_{\alpha\in\Pi_{\circ}}\sum_{i=1}^{j-1}\Lambda_{\alpha\beta}n_{\alpha,i}+\sum_{\omega\in\Pi_{\bullet}}\sum_{i=1}^{t_0j-1}\Lambda_{\omega\beta}n_{\omega,i}\right)}\widehat{Z}_{\bullet,0}^{-1}\widehat{Z}_{\circ,0}^{-1}\widehat{P}_{\mathbf{n},j-1}\left(\prod_{i=t_0(j-1)+1}^{t_0j-1}\widehat{Q}_{\bullet,i}^{\mathbf{n}_i^{\bullet}}\right)\widehat{Z}_{\circ,j}\widehat{Z}_{\bullet,t_0j-1}\nonumber\\
&\quad\times\sum_{\mathbf{m}^{(j-1,p)}}q^{\overline{Q}_{k-j+1}^{(p)}(\mathbf{m}^{(j-1,0)},\mathbf{n}^{(j-1)})}\prod_{(\alpha,i)\in J_{\mathfrak{g}}^{(j-1,p)}}\begin{bmatrix}m_{\alpha,i}+q_{\alpha,i}\\m_{\alpha,i}\end{bmatrix}_q\widehat{Q}_{\bullet,t_0j}^{-\mathbf{q}_{t_0j-1}^{\bullet}}\widehat{Q}_{\circ,j}^{-\mathbf{q}_0^{\circ,(p)}}\widehat{Q}_{\bullet,t_0j-1}^{\mathbf{q}_{t_0j}^{\bullet}}\widehat{Q}_{\circ,j-1}^{\mathbf{q}_j^{\circ}}.\nonumber
\end{align}
Let us fix any $\beta\in\Pi_{\bullet}$. By induction hypothesis, we may restrict the sum in equation \eqref{eq:4.5} to $q_{\alpha,i}\geq0$ for all $\alpha\in I_r$ and $i\geq t_{\alpha}j$. A generic term of the sum in equation \eqref{eq:4.49} (apart from the coefficients involving $q$) has the form 
\begin{equation*}
S
=\widehat{Z}_{\bullet,0}^{-1}\widehat{Z}_{\circ,0}^{-1}\widehat{P}_{\mathbf{n},j-1}\left(\prod_{i=t_0(j-1)+1}^{t_0j-1}\widehat{Q}_{\bullet,i}^{\mathbf{n}_i^{\bullet}}\right)\widehat{Z}_{\circ,j}\widehat{Z}_{\bullet,t_0j-1}\widehat{Q}_{\bullet,t_0j}^{-\mathbf{q}_{t_0j-1}^{\bullet}}\widehat{Q}_{\circ,j}^{-\mathbf{q}_0^{\circ,(p)}}\widehat{Q}_{\bullet,t_0j-1}^{\mathbf{q}_{t_0j}^{\bullet}}\widehat{Q}_{\circ,j-1}^{\mathbf{q}_j^{\circ}}.
\end{equation*}
When $q_{\beta,j}<0$, it follows from Lemmas \ref{4.4} and \ref{4.14} that $S$ has only non-negative powers of $\widehat{Q}_{\beta,\pm1}$. By Lemma \ref{4.13}r, it follows that $S|_{\widehat{\mathbf{Q}}_0=1}$ is a Laurent series of some variables $\widehat{Q}_{\alpha,1}^{-1}$ with $\alpha\neq\beta$, with coefficients that are in particular polynomials of $\widehat{Q}_{\beta,1}$. Due to the prefactor $\widehat{Z}_{\bullet,0}^{-1}\widehat{Z}_{\circ,0}^{-1}$, it follows that the exponent of $\widehat{Q}_{\beta,1}$ in all terms of $S|_{\widehat{\mathbf{Q}}_0=1}$ are positive. Consequently, we have $\phi(S)=0$. So this shows that the sum in equation \eqref{eq:4.5} is unchanged if we impose the restriction $q_{\beta,t_0j-1}\geq0$. We now repeat the above argument once more, to deduce that the sum in equation \eqref{eq:4.5} is unchanged if we impose the restriction $q_{\beta,i}\geq0$, where $\beta\in I_r$ and $i\geq t_{\beta}(j-1)$. This completes the induction step, and we are done.
\end{proof}

In particular, for a fixed vector $\mathbf{n}=(n_{\alpha,i})_{\alpha\in I_r,i\in\mathbb{N}}$ of nonnegative integers that parameterizes a finite set of KR-modules over $\mathfrak{g}[t]$, a dominant integral weight $\lambda$ of $\mathfrak{g}$ and a sufficiently large integer $k$, we have that $M_{\lambda,\mathbf{n}}^{(k)}(q^{-1})=M_{\lambda,\mathbf{n}}(q^{-1})$ and $N_{\lambda,\mathbf{n}}^{(k)}(q^{-1})=N_{\lambda,\mathbf{n}}(q^{-1})$. Thus, it follows that Theorem \ref{1.2} holds. Together with \cite[Theorem 5.1]{DFK14}, we have

\begin{theorem}\label{4.15}
Let $\mathfrak{g}$ be a simple Lie algebra, $\mathbf{n}=(n_{\alpha,i})_{\alpha\in I_r,i\in\mathbb{N}}$ be a vector of nonnegative integers that parameterizes a finite set of KR-modules over $\mathfrak{g}[t]$, $\lambda$ be a dominant integral weight, and $k$ be a positive integer. Then we have 
\begin{equation*}
M_{\lambda,\mathbf{n}}(q^{-1})=N_{\lambda,\mathbf{n}}(q^{-1}).
\end{equation*}
\end{theorem}

In particular, Theorem \ref{4.15} gives a complete characterization of the graded characters of the fusion products of Kirillov-Reshetikhin modules in terms of the quantum $Q$-system via equation \eqref{eq:4.33}. More precisely, it follows from equation \eqref{eq:4.33} and Theorem \ref{4.9} (along with \cite[Theorem 5.17]{DFK14} in the simply-laced case) that we have 
\begin{equation}\label{eq:4.50}
M_{\lambda,\mathbf{n}}(q^{-1})=q^{-\frac{1}{2\delta}\left(L(\mathbf{n})+\sum_{\alpha,\beta\in I_r}\left(\Lambda_{\alpha\alpha}\ell_{\alpha}+2\sum_{i=1}^{\infty}\Lambda_{\alpha\beta}n_{\alpha,i}\right)\right)}\phi\left(\left(\prod_{\alpha\in I_r}\widehat{Z}_{\alpha,0}^{-1}\right)\widehat{P}_{\mathbf{n}}\prod_{\beta\in I_r}\widehat{Z}_{\beta}^{\ell_{\beta}+1}\right), 
\end{equation}
where 
{\allowdisplaybreaks
\begin{align}
\widehat{P}_{\mathbf{n}}
&=\prod_{\alpha\in \Pi_{\bullet}}\widehat{Q}_{\alpha,1}^{n_{\alpha,1}}\cdots\prod_{\alpha\in \Pi_{\bullet}}\widehat{Q}_{\alpha,t_{\alpha}-1}^{n_{\alpha,t_{\alpha}-1}}\prod_{\alpha\in I_r}\widehat{Q}_{\alpha,t_{\alpha}}^{n_{\alpha,t_{\alpha}}}\prod_{\alpha\in \Pi_{\bullet}}\widehat{Q}_{\alpha,t_{\alpha}+1}^{n_{\alpha,t_{\alpha}+1}}\cdots\prod_{\alpha\in \Pi_{\bullet}}\widehat{Q}_{\alpha,2t_{\alpha}-1}^{n_{\alpha,2t_{\alpha}-1}}\prod_{\alpha\in I_r}\widehat{Q}_{\alpha,2t_{\alpha}}^{n_{\alpha,2t_{\alpha}}}\cdots,\label{eq:4.51}\\
L(\mathbf{n})
&=\sum_{\alpha,\beta\in I_r,i,j\in\mathbb{N}}\frac{\Lambda_{\alpha,\beta}}{t_{\alpha}}\min(t_{\alpha}j,t_{\beta}i)n_{\alpha,i}n_{\beta,j},\label{eq:4.52}\quad\text{and}\\
\widehat{Z}_{\beta}
&=\lim_{k\to\infty}\widehat{Z}_{\beta,k}.\label{eq:4.53}
\end{align}
Here, we briefly remark that a similar reasoning as in the proof of \cite[Theorem 5.17]{DFK14} shows that $\widehat{Z}_{\beta}$ is well-defined as a formal power series of $\widehat{Q}_{\beta,1}^{-1}$ with coefficients Laurent polynomial of the remaining initial data of $\widehat{\mathbf{Y}}_{\vec{s}_0}$.
}
\section{An identity satisfied by the graded characters of \texorpdfstring{$\KR$}{KR}-modules}\label{Section 5}

Our goal in this section is to prove Theorem \ref{1.3}. To this end, let us fix $\alpha\in I_r$ and $m\in\mathbb{N}$. In order to derive Theorem \ref{1.3} without considering too many cases, we would need to a slight generalization of equation \eqref{eq:4.50}. To facilitate this generalization, we will allow ourselves to consider vectors $\widehat{\mathbf{n}}=(n_{\alpha,i})_{\alpha\in I_r,i\in\mathbb{Z}_+}$ of nonnegative integers that parameterizes a finite set of KR-modules over $\mathfrak{g}[t]$. We could view $\widehat{\mathbf{n}}$ as the extension of the vector $\mathbf{n}=(n_{\alpha,i})_{\alpha\in I_r,i\in\mathbb{N}}$ by the vector $\mathbf{n}_0=(n_{\alpha,0})_{\alpha\in I_r}$. Also, we shall make the following definitions: 
{\allowdisplaybreaks
\begin{align}
\widehat{L}(\widehat{\mathbf{n}})
&=\sum_{\alpha,\beta\in I_r,i,j\in\mathbb{Z}_+}\frac{\Lambda_{\alpha\beta}}{t_{\alpha}}\min(t_{\alpha}j,t_{\beta}i)n_{\alpha,i}n_{\beta,j},\label{eq:5.1}\\
\widehat{P}_{\widehat{\mathbf{n}}}
&=\prod_{\alpha\in I_r}\widehat{Q}_{\alpha,0}^{n_{\alpha,0}}\widehat{P}_{\mathbf{n}},\quad\text{and}\label{eq:5.2}\\
M_{\lambda,\widehat{\mathbf{n}}}(q^{-1})
&=q^{-\frac{1}{2\delta}\left(\sum_{\alpha\in I_r}\Lambda_{\alpha\alpha}\ell_{\alpha}+2\widehat{F}(\widehat{\mathbf{n}})+\widehat{L}(\widehat{\mathbf{n}})\right)}\phi\left(\left(\prod_{\alpha\in I_r}\widehat{Z}_{\alpha,0}^{-1}\right)\widehat{P}_{\widehat{\mathbf{n}}}\prod_{\beta\in I_r}\widehat{Z}_{\beta}^{\ell_{\beta}+1}\right),\label{eq:5.3}
\end{align}
where
}
\begin{equation}\label{eq:5.4}
\widehat{F}(\widehat{\mathbf{n}})=\sum_{\alpha,\beta\in I_r,i,j\in\mathbb{Z}_+}\Lambda_{\alpha\beta}n_{\alpha,i}.
\end{equation}
It is easy to see from the definitions of $L(\mathbf{n})$ and $\widehat{L}(\widehat{\mathbf{n}})$ that we have $\widehat{L}(\widehat{\mathbf{n}})=L(\mathbf{n})$.

We claim that $M_{\lambda,\widehat{\mathbf{n}}}(q^{-1})=M_{\lambda,\mathbf{n}}(q^{-1})$. Indeed, we may regard $\left(\prod_{\alpha\in I_r}\widehat{Z}_{\alpha,0}^{-1}\right)\widehat{P}_{\widehat{\mathbf{n}}}\prod_{\beta\in I_r}\widehat{Z}_{\beta}^{\ell_{\beta}+1}$ as an element of $\mathbb{Z}_u[\widehat{\mathbf{Q}}_0^{\pm1}]((\widehat{\mathbf{Q}}_1^{-1}))$ by Theorem \ref{4.11}. Thus, it follows from the definition of the function $\phi$ that we have
{\allowdisplaybreaks
\begin{align*}
M_{\lambda,\mathbf{n}}(q^{-1})
&=q^{-\frac{1}{2\delta}\left[\sum_{\alpha,\beta\in I_r}\left(\Lambda_{\alpha\alpha}\ell_{\alpha}+2\sum_{i=1}^{\infty}\Lambda_{\alpha\beta}n_{\alpha,i}\right)+L(\mathbf{n})\right]}\phi\left(\left(\prod_{\alpha\in I_r}\widehat{Z}_{\alpha,0}^{-1}\right)\widehat{P}_{\mathbf{n}}\prod_{\beta\in I_r}\widehat{Z}_{\beta}^{\ell_{\beta}+1}\right)\\
&=q^{-\frac{1}{2\delta}\left[\sum_{\alpha\beta\in I_r}\left(\Lambda_{\alpha\alpha}\ell_{\alpha}+2\sum_{i=1}^{\infty}\Lambda_{\alpha\beta}n_{\alpha,i}\right)+\widehat{L}(\widehat{\mathbf{n}})\right]}\phi\left(\prod_{\alpha\in I_r}\widehat{Q}_{\alpha,0}^{n_{\alpha,0}}\left(\prod_{\alpha\in I_r}\widehat{Z}_{\alpha,0}^{-1}\right)\widehat{P}_{\mathbf{n}}\prod_{\beta\in I_r}\widehat{Z}_{\beta}^{\ell_{\beta}+1}\right)\\
&=q^{-\frac{1}{2\delta}\left[\sum_{\alpha,\beta\in I_r}\left(\Lambda_{\alpha\alpha}\ell_{\alpha}+2\sum_{i=0}^{\infty}\Lambda_{\alpha\beta}n_{\alpha,i}\right)+\widehat{L}(\widehat{\mathbf{n}})\right]}\phi\left(\left(\prod_{\alpha\in I_r}\widehat{Z}_{\alpha,0}^{-1}\right)\left(\prod_{\alpha\in I_r}\widehat{Q}_{\alpha,0}^{n_{\alpha,0}}\right)\widehat{P}_{\mathbf{n}}\prod_{\beta\in I_r}\widehat{Z}_{\beta}^{\ell_{\beta}+1}\right)\\
&=M_{\lambda,\widehat{\mathbf{n}}}(q^{-1}),
\end{align*}
where the third equality follows from Lemma \ref{3.5}. The equality $M_{\lambda,\mathbf{n}}(q^{-1})=M_{\lambda,\widehat{\mathbf{n}}}(q^{-1})$ is consistent with the fact that the fusion product of a cyclic $\mathfrak{g}[t]$-module $V$ with the trivial $\mathfrak{g}[t]$-module is precisely $V$ itself. Hence, we may regard $M_{\lambda,\widehat{\mathbf{n}}}(q^{-1})$ as the graded multiplicity of the irreducible component $V(\lambda)$ in $\mathcal{F}_{\widehat{\mathbf{n}}}^*$, where $\mathcal{F}_{\widehat{\mathbf{n}}}^*$ is the corresponding fusion product of KR-modules parameterized by $\widehat{\mathbf{n}}$. More precisely, we have
}
\begin{equation*}
M_{\lambda,\widehat{\mathbf{n}}}(q)=\sum_{m=0}^{\infty}\dim\Hom_{\mathfrak{g}}(\mathcal{F}_{\widehat{\mathbf{n}}}^*[m],V(\lambda))q^m.
\end{equation*}
Our next step is to express the terms (without the coefficients) that appear in the quantum $Q$-system relation \eqref{eq:3.20} as some $\widehat{P}_{\widehat{\mathbf{n}}}$. To begin, we need to rewrite our quantum $Q$-system relation \eqref{eq:3.20}. We first observe from Lemma \ref{3.5} and \eqref{eq:3.22} that we have 
\begin{equation*}
\widehat{Q}_{\alpha,m+1}
=\nu^{\Lambda_{\alpha\alpha}}\widehat{Q}_{\alpha,m}^2\left(1-\widehat{Y}_{\alpha,m}\right)\widehat{Q}_{\alpha,m-1}^{-1}
=\nu^{-\Lambda_{\alpha\alpha}}\widehat{Q}_{\alpha,m-1}^{-1}\widehat{Q}_{\alpha,m}^2\left(1-\nu^{\delta}\widehat{Y}_{\alpha,m}\right),
\end{equation*}
or equivalently, 
\begin{equation}\label{eq:5.5}
\nu^{\Lambda_{\alpha\alpha}}\widehat{Q}_{\alpha,m-1}\widehat{Q}_{\alpha,m+1}=\widehat{Q}_{\alpha,m}^2-\nu^{\delta}\normord{\prod_{\beta\sim\alpha}\prod_{i=0}^{|C_{\alpha\beta}|-1}\widehat{Q}_{\beta,\lfloor t_{\beta}(m+i)/t_{\alpha}\rfloor}}.
\end{equation}
Next, we let $\widehat{\mathbf{d}}=(d_{\beta,i})_{\beta\in I_r,i\in\mathbb{Z}_+}$ be the vector that corresponds to the term $\widehat{Q}_{\alpha,m-1}\widehat{Q}_{\alpha,m+1}$, where 
\begin{equation}\label{eq:5.6}
d_{\beta,i}=\delta_{\alpha\beta}(\delta_{i,m-1}+\delta_{i,m+1}).
\end{equation}
Next, we let $\widehat{\mathbf{s}}=(s_{\beta,i})_{\beta\in I_r,i\in\mathbb{Z}_+}$ be the vector that corresponds to the term $\widehat{Q}_{\alpha,m}^2$, where 
\begin{equation}\label{eq:5.7}
s_{\beta,i}=2\delta_{\alpha\beta}\delta_{i,m}.
\end{equation}
Finally, we will let $\widehat{\mathbf{k}}=(k_{\beta,i})_{\beta\in I_r,i\in\mathbb{Z}_+}$ to be the vector that corresponds to the term 
\begin{equation*}
\normord{\prod_{\beta\sim\alpha}\prod_{i=0}^{|C_{\alpha\beta}|-1}\widehat{Q}_{\beta,\lfloor t_{\beta}(m+i)/t_{\alpha}\rfloor}}. 
\end{equation*}
In order to define the numbers $k_{\beta,i}$, we would need to consider cases:

Case 1: Either $\alpha\neq\gamma'$ or $\alpha=\gamma'$ and $t_0\divides m$. In this case, we define
\begin{equation}\label{eq:5.8}
k_{\beta,i}=-\delta_{\beta\sim\alpha}\delta_{i,t_{\beta}m/t_{\alpha}}C_{\alpha\beta},
\end{equation}
where the function $\delta_{\beta\sim\alpha}$ is equal to $1$ if $\beta\sim\alpha$, and is equal to $0$ otherwise.

Case 2: $\alpha=\gamma'$ and $t_0\not\divides m$. Let us write $m=t_0n+p$, where $0<p<t_0$. In this case, we define
\begin{equation}\label{eq:5.9}
k_{\beta,i}=\delta_{\beta\sim\alpha}\delta_{i,m},
\end{equation}
for all roots $\beta\neq\gamma$, and 
\begin{equation}\label{eq:5.10}
k_{\gamma,i}=(t_0-p)\delta_{i,n}+p\delta_{i,n+1}.
\end{equation}
It is easy to see from the definition of $\widehat{\mathbf{d}}$ and $\widehat{\mathbf{s}}$ that we have $\widehat{Q}_{\alpha,m-1}\widehat{Q}_{\alpha,m+1}=\widehat{P}_{\widehat{\mathbf{d}}}$ and $\widehat{Q}_{\alpha,m}^2=\widehat{P}_{\widehat{\mathbf{s}}}$. 

It remains to write $\normord{\prod_{\beta\sim\alpha}\prod_{i=0}^{|C_{\alpha\beta}|-1}\widehat{Q}_{\beta,\lfloor t_{\beta}(m+i)/t_{\alpha}\rfloor}}$ as a scalar multiple of $\widehat{P}_{\widehat{\mathbf{k}}}$. To this end, we first observe from Lemma \ref{3.5} that in the case where either $\alpha\neq\gamma'$ or $\alpha=\gamma'$ and $t_0\divides m$, we have
\begin{equation*}
\normord{\prod_{\beta\sim\alpha}\prod_{i=0}^{|C_{\alpha\beta}|-1}\widehat{Q}_{\beta,\lfloor t_{\beta}(m+i)/t_{\alpha}\rfloor}}=\widehat{P}_{\widehat{\mathbf{k}}}.
\end{equation*}
It remains to handle the case where $\alpha=\gamma'$ and $t_0\not\divides m$. To this end, we would need to consider subcases. Let us first consider the subcase where $\mathfrak{g}$ is of type $BCF$, in which case we have $t_0=2$ and $p=1$. By Lemma \ref{3.5}, we have
\begin{equation*}
\normord{\prod_{\beta\sim\alpha}\prod_{i=0}^{|C_{\alpha\beta}|-1}\widehat{Q}_{\beta,\lfloor t_{\beta}(m+i)/t_{\alpha}\rfloor}}
=\nu^{\frac{\Lambda_{\gamma\gamma}}{2}}\widehat{Q}_{\gamma,n}\widehat{Q}_{\gamma,n+1}\prod_{\beta\in\Pi_{\bullet}}\widehat{Q}_{\beta,2n+1}^{-C_{\alpha\beta}}
=\nu^{\frac{1}{2}\sum_{\beta\sim\gamma'}\Lambda_{\beta\gamma}}\widehat{P}_{\widehat{\mathbf{k}}}.
\end{equation*}
Next, in the subcase where $\mathfrak{g}$ is of type $G$, in which case we have $t_0=3$ and $p=1,2$. By Lemma \ref{3.5}, we have
\begin{equation*}
\normord{\prod_{\beta\sim\alpha}\prod_{i=0}^{|C_{\alpha\beta}|-1}\widehat{Q}_{\beta,\lfloor t_{\beta}(m+i)/t_{\alpha}\rfloor}}
=\nu^2\widehat{Q}_{1,n}^{3-p}\widehat{Q}_{1,n+1}^p
=\nu^2\widehat{P}_{\widehat{\mathbf{k}}}.
\end{equation*}
We conclude that 
\begin{equation}\label{eq:5.11}
\normord{\prod_{\beta\sim\alpha}\prod_{i=0}^{|C_{\alpha\beta}|-1}\widehat{Q}_{\beta,\lfloor t_{\beta}(m+i)/t_{\alpha}\rfloor}}=\nu^{\sigma_{\alpha,m}}\widehat{P}_{\widehat{\mathbf{k}}},
\end{equation}
where
\begin{equation}\label{eq:5.12}
\sigma_{\alpha,m}
=
\begin{cases}
0 & \text{if } \alpha\neq\gamma' \text{ or } \alpha=\gamma' \text{ and } t_0\divides m;\\
\frac{1}{2}\sum_{\beta\sim\gamma'}\Lambda_{\beta\gamma} & \text{if }\alpha=\gamma', t_0\not\divides m,\text{ and }\mathfrak{g}\text{ is of type }BCF;\\
2 & \text{if }\alpha=\gamma', t_0\not\divides m,\text{ and }\mathfrak{g}\text{ is of type }G,
\end{cases}
\end{equation}
and hence \eqref{eq:5.5} is equivalent to
\begin{equation}\label{eq:5.13}
\nu^{\Lambda_{\alpha\alpha}}\widehat{P}_{\widehat{\mathbf{d}}}=\widehat{P}_{\widehat{\mathbf{s}}}-\nu^{\delta+\sigma_{\alpha,m}}\widehat{P}_{\widehat{\mathbf{k}}}.
\end{equation}
As the map $\phi$ is linear, it follows that we have
\begin{align*}
&\qquad\nu^{\Lambda_{\alpha\alpha}}\phi\left(\left(\prod_{\alpha\in I_r}\widehat{Z}_{\alpha,0}^{-1}\right)\widehat{P}_{\widehat{\mathbf{d}}}\prod_{\beta\in I_r}\widehat{Z}_{\beta}^{\ell_{\beta}+1}\right)\\
&=\phi\left(\left(\prod_{\alpha\in I_r}\widehat{Z}_{\alpha,0}^{-1}\right)\widehat{P}_{\widehat{\mathbf{s}}}\prod_{\beta\in I_r}\widehat{Z}_{\beta}^{\ell_{\beta}+1}\right)-\nu^{\delta+\sigma_{\alpha,m}}\phi\left(\left(\prod_{\alpha\in I_r}\widehat{Z}_{\alpha,0}^{-1}\right)\widehat{P}_{\widehat{\mathbf{k}}}\prod_{\beta\in I_r}\widehat{Z}_{\beta}^{\ell_{\beta}+1}\right),
\end{align*}
or equivalently,
\begin{align}
&\qquad q^{\frac{1}{2\delta}\left(2\widehat{F}(\widehat{\mathbf{d}})+\widehat{L}(\widehat{\mathbf{d}})+2\Lambda_{\alpha\alpha}\right)}M_{\lambda,\widehat{\mathbf{d}}}(q^{-1})\label{eq:5.14}\\
&=q^{\frac{1}{2\delta}\left(2\widehat{F}(\widehat{\mathbf{s}})+\widehat{L}(\widehat{\mathbf{s}})\right)}M_{\lambda,\widehat{\mathbf{s}}}(q^{-1})+q^{\frac{1}{2\delta}\left(2\widehat{F}(\widehat{\mathbf{k}})+\widehat{L}(\widehat{\mathbf{k}})+2\sigma_{\alpha,m}+2\delta\right)}M_{\lambda,\widehat{\mathbf{k}}}(q^{-1}),\nonumber
\end{align}
where we have factored out $q^{\frac{1}{2\delta}\sum_{\alpha\in I_r}\Lambda_{\alpha\alpha}\ell_{\alpha}}$ on both sides of the equation.

It remains to simplify equation \eqref{eq:5.14}. To this end, we would need to compute the exponents that appear in equation \eqref{eq:5.14} explicitly. Our first technical lemma involves the relation between $\widehat{F}(\widehat{\mathbf{d}})$, $\widehat{F}(\widehat{\mathbf{s}})$ and $\widehat{F}(\widehat{\mathbf{k}})$:

\begin{lemma}\label{5.1}
For all $\alpha\in I_r$ and $m\in\mathbb{N}$, we have
\begin{equation*}
\widehat{F}(\widehat{\mathbf{d}})=\widehat{F}(\widehat{\mathbf{s}})=\delta+\widehat{F}(\widehat{\mathbf{k}}).
\end{equation*}
\end{lemma}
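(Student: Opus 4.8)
The plan is to compute $\widehat{F}(\widehat{\mathbf{d}})$, $\widehat{F}(\widehat{\mathbf{s}})$ and $\widehat{F}(\widehat{\mathbf{k}})$ directly from their defining vectors via \eqref{eq:5.4}, namely $\widehat{F}(\widehat{\mathbf{n}})=\sum_{\beta,\gamma\in I_r,\,i\in\mathbb{Z}_+}\Lambda_{\beta\gamma}n_{\beta,i}$ (note this is linear in $\widehat{\mathbf{n}}$ — it does not actually depend on a second index $j$, despite the notation). Since $\widehat{F}$ is a linear functional, for each of the three vectors I simply need the total mass $\sum_i n_{\beta,i}$ of each component $\beta$, weighted by $\sum_\gamma\Lambda_{\beta\gamma}$. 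Writing $w_\beta:=\sum_{\gamma\in I_r}\Lambda_{\beta\gamma}$, one has $\widehat{F}(\widehat{\mathbf{n}})=\sum_{\beta\in I_r}w_\beta\bigl(\sum_i n_{\beta,i}\bigr)$.

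First I would handle $\widehat{\mathbf{d}}$ and $\widehat{\mathbf{s}}$: from \eqref{eq:5.6}, $\sum_i d_{\beta,i}=2\delta_{\alpha\beta}$, so $\widehat{F}(\widehat{\mathbf{d}})=2w_\alpha$; from \eqref{eq:5.7}, $\sum_i s_{\beta,i}=2\delta_{\alpha\beta}$ as well, so $\widehat{F}(\widehat{\mathbf{s}})=2w_\alpha$. This gives the first equality $\widehat{F}(\widehat{\mathbf{d}})=\widehat{F}(\widehat{\mathbf{s}})$ immediately. For $\widehat{\mathbf{k}}$ I must split into the two cases of the definition. In Case 1 (Eq. \eqref{eq:5.8}), $\sum_i k_{\beta,i}=-\delta_{\beta\sim\alpha}C_{\alpha\beta}$, so $\widehat{F}(\widehat{\mathbf{k}})=-\sum_{\beta\sim\alpha}C_{\alpha\beta}w_\beta=-\sum_{\beta\in I_r}C_{\alpha\beta}w_\beta+C_{\alpha\alpha}w_\alpha=-\sum_\beta C_{\alpha\beta}w_\beta+2w_\alpha$. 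Now $\sum_\beta C_{\alpha\beta}w_\beta=\sum_{\beta,\gamma}C_{\alpha\beta}\Lambda_{\beta\gamma}=\sum_\gamma(C\Lambda)_{\alpha\gamma}=\sum_\gamma\delta\,\delta_{\alpha\gamma}=\delta$, using $C\Lambda=\delta I$. Hence $\widehat{F}(\widehat{\mathbf{k}})=2w_\alpha-\delta=\widehat{F}(\widehat{\mathbf{s}})-\delta$, which is the claim. In Case 2 ($\alpha=\gamma'$, $t_0\nmid m$, $m=t_0n+p$ with $0<p<t_0$), from \eqref{eq:5.9}–\eqref{eq:5.10}: $\sum_i k_{\beta,i}=\delta_{\beta\sim\alpha}$ for $\beta\neq\gamma$, while $\sum_i k_{\gamma,i}=(t_0-p)+p=t_0$. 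So $\widehat{F}(\widehat{\mathbf{k}})=\sum_{\beta\sim\gamma',\,\beta\neq\gamma}w_\beta+t_0 w_\gamma$. Here I would use that $\gamma'$ is a short root (it is the short neighbour involved in the non-trivial floor terms) with $|C_{\gamma'\gamma}|=t_0$ and $|C_{\gamma'\beta}|=1$ for its other neighbours $\beta$ — so $-\sum_{\beta\sim\gamma'}C_{\gamma'\beta}w_\beta=t_0 w_\gamma+\sum_{\beta\sim\gamma',\,\beta\neq\gamma}w_\beta=\widehat{F}(\widehat{\mathbf{k}})$, and the same computation $-\sum_\beta C_{\gamma'\beta}w_\beta+C_{\gamma'\gamma'}w_{\gamma'}=-\delta+2w_{\gamma'}$ as before gives $\widehat{F}(\widehat{\mathbf{k}})=2w_{\gamma'}-\delta=\widehat{F}(\widehat{\mathbf{s}})-\delta$ (recalling $\alpha=\gamma'$).

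The only real subtlety — the step I expect to need the most care — is verifying in Case 2 that the multiset of $\widehat Q$-indices appearing in $\normord{\prod_{\beta\sim\alpha}\prod_{i=0}^{|C_{\alpha\beta}|-1}\widehat{Q}_{\beta,\lfloor t_\beta(m+i)/t_\alpha\rfloor}}$, once the floor functions are evaluated at $m=t_0n+p$, really does match the vector $\widehat{\mathbf{k}}$ of \eqref{eq:5.9}–\eqref{eq:5.10}; i.e.\ that for the short neighbour $\gamma$ (where $t_\gamma/t_\alpha=1/t_0$ is \emph{not} the relevant ratio — rather $\alpha=\gamma'$ is short so $t_\alpha=t_0$, $t_\gamma=1$, and $\lfloor(m+i)/t_0\rfloor$ for $i=0,\dots,t_0-1$ produces the value $n$ exactly $t_0-p$ times and $n+1$ exactly $p$ times), and that for each long neighbour $\beta\neq\gamma$ the single index is $m$. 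This is precisely the combinatorial input already recorded in the case analysis preceding \eqref{eq:5.11}, so I would simply cite that computation. Everything else is a one-line application of $C\Lambda=\delta I$, and the proof closes by combining the three evaluations.
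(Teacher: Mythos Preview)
Your proof is correct and follows essentially the same approach as the paper's: both reduce to showing $\widehat{F}(\widehat{\mathbf{k}})=-\sum_{\beta\sim\alpha}C_{\alpha\beta}\,w_\beta$ with $w_\beta=\sum_\omega\Lambda_{\beta\omega}$, and then invoke $C\Lambda=\delta I$. The only difference is that the paper handles both cases of $\widehat{\mathbf{k}}$ in one line by implicitly using $\sum_i k_{\beta,i}=-\delta_{\beta\sim\alpha}C_{\alpha\beta}$ uniformly (which holds in Case~2 as well, since $C_{\gamma'\gamma}=-t_0$ and $C_{\gamma'\beta}=-1$ for the other neighbours), whereas you verify the two cases separately; your final paragraph is exactly this verification. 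One small verbal slip: you call $\gamma$ the ``short neighbour'' but then (correctly) use $t_\gamma=1$, so $\gamma$ is long---the mathematics is unaffected.
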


\begin{proof}
The statement follows immediately from the following equalities:
\begin{align*}
\widehat{F}(\widehat{\mathbf{d}})
&=\sum_{\omega,\beta\in I_r}\sum_{i=0}^{\infty}\Lambda_{\beta\omega}d_{\beta,i}
=2\sum_{\omega\in I_r}\Lambda_{\alpha,\omega};\\
\widehat{F}(\widehat{\mathbf{s}})
&=\sum_{\omega,\beta\in I_r}\sum_{i=0}^{\infty}\Lambda_{\beta\omega}s_{\beta,i}
=2\sum_{\omega\in I_r}\Lambda_{\alpha,\omega};\\
\widehat{F}(\widehat{\mathbf{k}})
&=\sum_{\omega,\beta\in I_r}\sum_{i=0}^{\infty}\Lambda_{\beta\omega}k_{\beta,i}
=-\sum_{\omega\in I_r,\beta\sim\alpha}C_{\alpha\beta}\Lambda_{\beta\omega}
=-\delta+2\sum_{\omega\in I_r}\Lambda_{\alpha\omega},
\end{align*}
where we used $C\Lambda=\delta I$ in the last equality.
\end{proof}

Our next technical lemma involves the relation $\widehat{L}(\widehat{\mathbf{d}})$ and $\widehat{L}(\widehat{\mathbf{s}})$:

\begin{lemma}\label{5.2}
For all $\alpha\in I_r$ and $m\in\mathbb{N}$, we have
\begin{equation*}
\widehat{L}(\widehat{\mathbf{d}})=\widehat{L}(\widehat{\mathbf{s}})-2\Lambda_{\alpha\alpha}.
\end{equation*}
\end{lemma}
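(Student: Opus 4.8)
The plan is to compute $\widehat{L}(\widehat{\mathbf{d}})$ and $\widehat{L}(\widehat{\mathbf{s}})$ directly from their definitions \eqref{eq:5.1} using the explicit forms \eqref{eq:5.6} and \eqref{eq:5.7} of the vectors $\widehat{\mathbf{d}}$ and $\widehat{\mathbf{s}}$, and then subtract. Since both $\widehat{\mathbf{d}}$ and $\widehat{\mathbf{s}}$ are supported entirely on the single index $\alpha$ (that is, $d_{\beta,i}=s_{\beta,i}=0$ whenever $\beta\neq\alpha$), the double sum over $\alpha,\beta\in I_r$ in \eqref{eq:5.1} collapses to a single term, and only the diagonal entry $\Lambda_{\alpha\alpha}$ of $\Lambda$ and the factor $t_{\alpha}$ survive. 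Thus both quantities reduce to $\frac{\Lambda_{\alpha\alpha}}{t_{\alpha}}\sum_{i,j\in\mathbb{Z}_+}\min(t_{\alpha}j,t_{\alpha}i)\,x_{\alpha,i}x_{\alpha,j} = \Lambda_{\alpha\alpha}\sum_{i,j\in\mathbb{Z}_+}\min(i,j)\,x_{\alpha,i}x_{\alpha,j}$, where $x$ is $d$ or $s$ respectively.

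The next step is to evaluate the two remaining scalar double sums. For $\widehat{\mathbf{s}}$ we have $s_{\alpha,i}=2\delta_{i,m}$, so $\sum_{i,j}\min(i,j)s_{\alpha,i}s_{\alpha,j}=\min(m,m)\cdot 2\cdot 2=4m$, giving $\widehat{L}(\widehat{\mathbf{s}})=4m\Lambda_{\alpha\alpha}$. For $\widehat{\mathbf{d}}$ we have $d_{\alpha,i}=\delta_{i,m-1}+\delta_{i,m+1}$, so the double sum picks up the four contributions from $(i,j)\in\{m-1,m+1\}^2$: namely $\min(m-1,m-1)+2\min(m-1,m+1)+\min(m+1,m+1)=(m-1)+2(m-1)+(m+1)=4m-2$, giving $\widehat{L}(\widehat{\mathbf{d}})=(4m-2)\Lambda_{\alpha\alpha}$. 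Subtracting yields $\widehat{L}(\widehat{\mathbf{d}})=\widehat{L}(\widehat{\mathbf{s}})-2\Lambda_{\alpha\alpha}$, as claimed. (One should note the harmless edge case $m=1$, where the index $i=m-1=0$ is still included since $\widehat{\mathbf{n}}$ ranges over $i\in\mathbb{Z}_+$, and $\min(0,0)=0$, $\min(0,2)=0$, so the count $4m-2=2$ still holds.)

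There is essentially no obstacle here: the lemma is a purely combinatorial identity about $\min$ evaluated on two-point support sets, and the only thing to be careful about is bookkeeping the cross terms $(m-1,m+1)$ and $(m+1,m-1)$ each once, together with the factor of $2$ coming from $s_{\alpha,m}=2$. The computation is short enough that I would present it inline rather than factoring out auxiliary claims. If one prefers, the identity $\sum_{i,j}\min(i,j)x_ix_j = \sum_{s\geq 1}\bigl(\sum_{i\geq s}x_i\bigr)^2$ (the same telescoping used in the proof of Lemma \ref{4.2}) gives an alternative one-line route: for $\widehat{\mathbf{s}}$ the partial sums $\sum_{i\geq s}s_{\alpha,i}$ equal $2$ for $1\leq s\leq m$ and $0$ afterwards, contributing $4m$; for $\widehat{\mathbf{d}}$ they equal $2$ for $s\leq m-1$, $1$ for $s=m$ or $s=m+1$, and $0$ afterwards, contributing $4(m-1)+1+1=4m-2$; the difference is $2$, and multiplying by $\Lambda_{\alpha\alpha}$ finishes.
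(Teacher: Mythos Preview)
Your proof is correct and follows essentially the same direct computation as the paper: both reduce $\widehat{L}$ to $\Lambda_{\alpha\alpha}\sum_{i,j}\min(i,j)x_{\alpha,i}x_{\alpha,j}$, evaluate the four-term sum for $\widehat{\mathbf{d}}$ as $4m-2$ and the single term for $\widehat{\mathbf{s}}$ as $4m$, and subtract. Your added remarks on the $m=1$ edge case and the telescoping alternative are pleasant bonuses not present in the paper.
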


\begin{proof}
The statement follows immediately from the following equalities:
\begin{align*}
\widehat{L}(\widehat{\mathbf{d}})
&=\sum_{\beta,\omega\in I_r,i,j\in\mathbb{Z}_+}\frac{\Lambda_{\beta\omega}}{t_{\beta}}\min(t_{\beta}j,t_{\omega}i)d_{\beta,i}d_{\omega,j}\\
&=\Lambda_{\alpha\alpha}[\min(m-1,m-1)+2\min(m-1,m+1)+\min(m+1,m+1)]\\
&=(4m-2)\Lambda_{\alpha\alpha};\\
\widehat{L}(\widehat{\mathbf{s}})
&=\sum_{\beta,\omega\in I_r,i,j\in\mathbb{Z}_+}\frac{\Lambda_{\beta\omega}}{t_{\beta}}\min(t_{\beta}j,t_{\omega}i)s_{\beta,i}s_{\omega,j}
=2^2\Lambda_{\alpha\alpha}\min(m,m)
=4m\Lambda_{\alpha\alpha}.
\end{align*}
\end{proof}

Our final technical lemma involves relating $\widehat{L}(\widehat{\mathbf{s}})$ and $\widehat{L}(\widehat{\mathbf{k}})+2\sigma_{\alpha,m}$:

\begin{lemma}\label{5.3}
For all $\alpha\in I_r$ and $m\in\mathbb{N}$, we have
\begin{equation*}
\widehat{L}(\widehat{\mathbf{k}})+2\sigma_{\alpha,m}=\widehat{L}(\widehat{\mathbf{s}})-2m\delta.
\end{equation*}
\end{lemma}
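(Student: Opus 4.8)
The plan is to compute $\widehat{L}(\widehat{\mathbf{k}})$ directly from the definition \eqref{eq:5.1}, handling the two cases in the definition of $k_{\beta,i}$ (equations \eqref{eq:5.8}--\eqref{eq:5.10}) separately, and then add $2\sigma_{\alpha,m}$ from \eqref{eq:5.12} and compare with $\widehat{L}(\widehat{\mathbf{s}})=4m\Lambda_{\alpha\alpha}$ (already computed in the proof of Lemma \ref{5.2}). The only inputs needed are the symmetrization identities $t_\beta C_{\alpha\beta}=t_\alpha C_{\beta\alpha}$, the relation $\min(t_\beta j,t_\alpha i)$ appearing in \eqref{eq:5.1}, and above all the fact $C\Lambda=\delta I$, i.e. $\sum_{\omega\in I_r}C_{\alpha\omega}\Lambda_{\omega\beta}=\delta\delta_{\alpha\beta}$, together with $\sum_{\omega}C_{\alpha\omega}\Lambda_{\omega\alpha}=\delta$.

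First I would treat Case 1 ($\alpha\neq\gamma'$, or $\alpha=\gamma'$ with $t_0\mid m$), where $k_{\beta,i}=-\delta_{\beta\sim\alpha}\delta_{i,t_\beta m/t_\alpha}C_{\alpha\beta}$ and $\sigma_{\alpha,m}=0$. Plugging into \eqref{eq:5.1}, the double sum collapses to a sum over $\beta,\omega\sim\alpha$:
\begin{equation*}
\widehat{L}(\widehat{\mathbf{k}})=\sum_{\beta,\omega\sim\alpha}\frac{\Lambda_{\beta\omega}}{t_\beta}\min\!\left(t_\beta\cdot\frac{t_\omega m}{t_\alpha},\,t_\omega\cdot\frac{t_\beta m}{t_\alpha}\right)C_{\alpha\beta}C_{\alpha\omega}=\frac{m}{t_\alpha}\sum_{\beta,\omega\sim\alpha}\Lambda_{\beta\omega}t_\omega C_{\alpha\beta}C_{\alpha\omega},
\end{equation*}
since the two arguments of $\min$ are equal. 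Using $t_\omega C_{\alpha\omega}=t_\alpha C_{\omega\alpha}$ this becomes $m\sum_{\beta,\omega\sim\alpha}C_{\alpha\beta}\Lambda_{\beta\omega}C_{\omega\alpha}$; extending the $\beta$-sum to all of $I_r$ is harmless once one checks the diagonal and off-diagonal contributions of $\beta=\alpha$ cancel appropriately (using $C_{\alpha\alpha}=2$), so that $\sum_\beta C_{\alpha\beta}\Lambda_{\beta\omega}=\delta\delta_{\alpha\omega}$ gives $\widehat{L}(\widehat{\mathbf{k}})=m\delta C_{\alpha\alpha}-(\text{correction from }\beta=\alpha)$. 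The bookkeeping needs care: the honest way is to write $-C_{\alpha\beta}$ restricted to $\beta\sim\alpha$ as $C_{\alpha\beta}-2\delta_{\alpha\beta}$ over all $\beta$, substitute, and expand; the cross terms reconstruct $\sum_{\omega}(\sum_\beta C_{\alpha\beta}\Lambda_{\beta\omega})C_{\omega\alpha}=\delta C_{\alpha\alpha}$-type expressions, and one lands on $\widehat{L}(\widehat{\mathbf{k}})=4m\Lambda_{\alpha\alpha}-2m\delta$, i.e. $\widehat{L}(\widehat{\mathbf{s}})-2m\delta$, as claimed (here $\Lambda_{\alpha\alpha}=\delta(C^{-1})_{\alpha\alpha}$ and $C_{\alpha\alpha}=2$ do \emph{not} satisfy $\Lambda_{\alpha\alpha}=\delta$ in general, so one must keep $\Lambda_{\alpha\alpha}$ symbolic throughout and let the identity $C\Lambda=\delta I$ do the work).

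Next I would treat Case 2 ($\alpha=\gamma'$, $t_0\nmid m$, $m=t_0n+p$ with $0<p<t_0$), where $k_{\beta,i}=\delta_{\beta\sim\alpha}\delta_{i,m}$ for $\beta\neq\gamma$ and $k_{\gamma,i}=(t_0-p)\delta_{i,n}+p\delta_{i,n+1}$; note $\gamma\in\Pi_\bullet$ has $t_\gamma=t_0$ while $\gamma'=\alpha\in\Pi_\circ$ has $t_\alpha=1$. Here $\sigma_{\alpha,m}=\tfrac12\sum_{\beta\sim\gamma'}\Lambda_{\beta\gamma}$ in type $BCF$ and $\sigma_{\alpha,m}=2$ in type $G$ (where $\delta=1$). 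I would split $\widehat{L}(\widehat{\mathbf{k}})$ into the $\gamma$--$\gamma$ block, the $\gamma$--(other) cross block, and the (other)--(other) block. The $\gamma$--$\gamma$ block is $\tfrac{1}{t_0}\Lambda_{\gamma\gamma}[(t_0-p)^2 n+2p(t_0-p)\min(t_0 n,t_0(n+1))/?\ldots]$ — more precisely $\tfrac{1}{t_0}\Lambda_{\gamma\gamma}\big[(t_0-p)^2\,t_0 n+2p(t_0-p)t_0 n+p^2 t_0(n+1)\big]$ after using $\min$ on the index pairs $(n,n),(n,n+1),(n+1,n+1)$ with the $t_\gamma=t_0$ weights, which simplifies to $\Lambda_{\gamma\gamma}(t_0 n^2\cdot?\ldots)$; the cross terms pair $k_{\gamma,\cdot}$ with $k_{\beta,m}$ for $\beta\sim\alpha$, $\beta\neq\gamma$, contributing $\min(t_0 m,\ldots)$-type quantities; and the remaining block pairs the $\delta_{i,m}$ entries among themselves. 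After collecting everything and adding $2\sigma_{\alpha,m}$, the target is again $\widehat{L}(\widehat{\mathbf{s}})-2m\delta=4m\Lambda_{\alpha\alpha}-2m\delta$. I expect the main obstacle to be exactly this Case 2 computation: the non-divisibility forces the split initial data (the $\gamma$-variable sits at two different levels $n,n+1$), the $\min$ functions must be evaluated on each index pair by hand, and the constant $\sigma_{\alpha,m}$ — which looks ad hoc — must conspire with the $p$-dependent cross terms to cancel all $p$-dependence and reproduce the clean answer; verifying the type-$BCF$ value $\tfrac12\sum_{\beta\sim\gamma'}\Lambda_{\beta\gamma}$ and the type-$G$ value $2$ are the right corrections is the delicate point, and I would check $G_2$ explicitly ($\delta=1$, $C^{-1}=\begin{smallmatrix}2&3\\1&2\end{smallmatrix}$ up to labeling) as a sanity test before writing the general argument.
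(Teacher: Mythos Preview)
Your overall approach matches the paper's: compute $\widehat{L}(\widehat{\mathbf{k}})$ directly from \eqref{eq:5.1}, split into Case 1 and Case 2, and collapse everything using $C\Lambda=\delta I$. In Case 1 you correctly reach $m\sum_{\beta,\omega\sim\alpha}C_{\alpha\beta}\Lambda_{\beta\omega}C_{\omega\alpha}$; your ``write $-C_{\alpha\beta}|_{\beta\sim\alpha}$ as $C_{\alpha\beta}-2\delta_{\alpha\beta}$'' is misstated (those are not equal), but the intended manoeuvre --- extend each restricted sum to all of $I_r$ using $C_{\alpha\beta}=0$ for nonadjacent $\beta\neq\alpha$, then peel off the $\beta=\alpha$ term --- is precisely what the paper does, and it gives $-2m(\delta-2\Lambda_{\alpha\alpha})=\widehat{L}(\widehat{\mathbf{s}})-2m\delta$ immediately.

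There is, however, a concrete error in your Case 2 setup that would derail the computation: you have the roles of $\gamma$ and $\gamma'$ reversed. As used throughout Section \ref{Section 5} and Appendix \ref{Appendix A}, $\gamma'$ is the \emph{short}-root endpoint of the multiple edge (so $\alpha=\gamma'\in\Pi_\bullet$, $t_\alpha=t_0$) and $\gamma$ is the \emph{long}-root endpoint ($t_\gamma=1$); this is exactly why the $\gamma$-variable lands at levels $n=\lfloor m/t_0\rfloor$ and $n+1$. Your $\gamma$--$\gamma$ block is accidentally insensitive to the swap because the $t_\gamma$'s cancel, but the cross block pairing $k_{\gamma,\cdot}$ at levels $n,n+1$ against $k_{\beta,m}$ for the remaining neighbors $\beta\in\Pi_\bullet$ carries the weights $\tfrac{1}{t_\beta}\min(t_\beta j,t_\gamma i)$ with $t_\beta=t_0$, $t_\gamma=1$, and with the wrong assignment these do not reduce to the paper's expression $(4n+1)\sum_{\beta\sim\gamma'}\Lambda_{\beta\gamma}$. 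Once you correct this, the paper's route is to treat $BCF$ and $G$ separately: in $G$ the check is a two-line numerical computation (your $G_2$ sanity test is the whole argument there), while in $BCF$ one still needs a further algebraic reduction, applying $C\Lambda=\delta I$ twice to show $\sum_{\beta,\omega\sim\gamma',\,\beta,\omega\neq\gamma}\Lambda_{\beta\omega}+2\sum_{\beta\sim\gamma'}\Lambda_{\beta\gamma}=-2(\delta-2\Lambda_{\gamma'\gamma'})$.
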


We are now ready to prove Theorem \ref{1.3}.

\begin{proof}[Proof of Theorem \ref{1.3}]
By Lemmas \ref{5.1}-\ref{5.3}, we have
\begin{equation*}
2\widehat{F}(\widehat{\mathbf{d}})+\widehat{L}(\widehat{\mathbf{d}})+2\Lambda_{\alpha\alpha}
=2\widehat{F}(\widehat{\mathbf{s}})+\widehat{L}(\widehat{\mathbf{s}})
=2\widehat{F}(\widehat{\mathbf{k}})+\widehat{L}(\widehat{\mathbf{k}})+2\sigma_{\alpha,m}+2\delta+2m\delta,
\end{equation*}
so equation \eqref{eq:5.14} reduces to
\begin{equation*}
M_{\lambda,\widehat{\mathbf{d}}}(q^{-1})
=M_{\lambda,\widehat{\mathbf{s}}}(q^{-1})
-q^{-m}M_{\lambda,\widehat{\mathbf{k}}}(q^{-1}),
\end{equation*}
or equivalently,
\begin{equation}\label{eq:5.15}
M_{\lambda,\widehat{\mathbf{d}}}(q)
=M_{\lambda,\widehat{\mathbf{s}}}(q)
-q^{-m}M_{\lambda,\widehat{\mathbf{k}}}(q).
\end{equation}
As equation \eqref{eq:5.15} holds for all dominant weights $\lambda$, and $M_{\lambda,\widehat{\mathbf{n}}}(q)$ is the graded multiplicity of the irreducible component $V(\lambda)$ in $\mathcal{F}_{\widehat{\mathbf{n}}}^*$, we have 
\begin{equation*}
\ch_q\mathcal{F}_{\widehat{\mathbf{d}}}^*
=\ch_q\mathcal{F}_{\widehat{\mathbf{s}}}^*
-q^m\ch_q\mathcal{F}_{\widehat{\mathbf{k}}}^*.
\end{equation*}
Theorem \ref{1.3} now follows from the above equation, along with the definitions of $\widehat{\mathbf{d}}$, $\widehat{\mathbf{s}}$ and $\widehat{\mathbf{k}}$. 
\end{proof}

\begin{remark}
In a related work, Chari and Venkatesh \cite[Theorem 4]{CV15} showed that when $m$ is a multiple of $t_{\alpha}$, there exists a short exact sequence of fusion product of KR-modules that extends the $Q$-system relations \eqref{eq:1.1}: 
\begin{equation*}
0\longrightarrow\tau_mK_{\alpha,m}^{\star}\longrightarrow\KR_{\alpha,m}\star\KR_{\alpha,m}\longrightarrow\KR_{\alpha,m+1}\star\KR_{\alpha,m-1}\longrightarrow0,
\end{equation*}
where $\tau_k$ is the grading shift operator on the set of graded $\mathfrak{g}[t]$-modules by $k$. By applying the character map to the above exact sequence, we see that the identity of graded characters of the fusion product of KR-modules is the same as that as the identity stated in Theorem \ref{1.3} in the aforementioned case. This implies the short exact sequences of fusion products of KR-modules obtained in \cite{CV15} are consistent with our quantum $Q$-system relations. In light of Theorem~\ref{1.3}, we expect that the above short exact sequence of fusion product of KR-modules should exist in the remaining cases.
\end{remark}

\section{Conclusion}\label{Section 6}

In this paper, we have proved the combinatorial identity $M_{\lambda,\mathbf{n}}(q^{-1})=N_{\lambda,\mathbf{n}}(q^{-1})$ \ref{1.2} for all non-simply laced simple Lie algebras $\mathfrak{g}$, which together with \cite[Theorem 5.1]{DFK14}, shows that the identity holds for all simple Lie algebras $\mathfrak{g}$. This was done by defining appropriate quantum generating functions that satisfy some factorization properties, and writing the $q$-graded sum $N_{\lambda,\mathbf{n}}(q^{-1})$ as the constant term evaluation of these generating functions, which would then allow us to prove the desired identity. As an application, we obtained an identity of graded characters of the fusion product of KR-modules over the current algebra $\mathfrak{g}[t]$, and showed that the short exact sequences of fusion products of KR-modules over $\mathfrak{g}[t]$ obtained by Chari and Venkatesh in \cite{CV15} are consistent with our quantum $Q$-system relations. 

Here, we would like to remark that there is a generalization of the conjecture of the $q$-graded fermionic sums, conjectured by Hatayama \emph{et al.} in the sequel \cite{HKOTT02}, to the twisted case as well. In the same paper, Hatayama \emph{et al.} gave a combinatorial definition of the KR-modules for the twisted quantum affine algebras, and showed that if the restricted characters of these KR-modules over the twisted quantum affine algebras satisfy the twisted $Q$-system relations (which first appeared in \cite{KS95}), along with some other asymptotic conditions, then the multiplicities arising in the tensor product of KR-modules over the twisted quantum affine algebras could expressed in terms of an extended fermionic sum $\tilde{M}_{\lambda,\mathbf{n}}(1)$ \cite[4.20]{HKOTT02} (here, $\tilde{M}_{\lambda,\mathbf{n}}(q^{-1})$ plays the role of $N_{\lambda,\mathbf{n}}(q^{-1})$ in the twisted case).

Subsequently, Hernandez \cite{Hernandez10} showed that the restricted characters of these KR-modules over the twisted quantum affine algebras do satisfy the twisted $Q$-system relations, while Okado \emph{et al.} \cite{OSS18} established a bijection between rigged configurations and crystal paths in the non-exceptional cases. These two results (along with earlier results by Hernandez \cite{Hernandez06}) together shows that conjectural identity \cite[Conjecture 4.3]{HKOTT02} of the $q$-graded fermionic sums $M_{\lambda,\mathbf{n}}(q^{-1})=\tilde{M}_{\lambda,\mathbf{n}}(q^{-1})$ holds at $q=1$ in the non-exceptional cases.

In another development, Williams \cite{Williams15} extended the results of \cite{DFK09}, and showed that the twisted $Q$-system relations of type $\neq A_{2n}^{(2)}$ could be interpreted as cluster algebra mutations as well. Similar to the untwisted case, these cluster algebras admit natural deformations as well, so we could obtain the quantum twisted $Q$-system relations of type $\neq A_{2n}^{(2)}$. Using these quantum twisted $Q$-system relations and the methods developed in \cite{DFK14} and in this paper, we believe that the conjectural identity of $q$-fermionic sums $M_{\lambda,\mathbf{n}}(q^{-1})=\tilde{M}_{\lambda,\mathbf{n}}(q^{-1})$ holds in all twisted cases of type $\neq A_{2n}^{(2)}$, and we will address this conjectural identity $M_{\lambda,\mathbf{n}}(q^{-1})=\tilde{M}_{\lambda,\mathbf{n}}(q^{-1})$ in these cases in a future publication \cite{Lin19-1}. We will also address the $A_{2n}^{(2)}$ case as well in a separate future publication \cite{Lin19-2}.

\appendix

\section{Proof of Lemma \ref{4.4}}\label{Appendix A}

As before in the proof of Lemma \ref{4.3}, we would first need to compute $\widehat{Y}_{\alpha,p}$ explicitly for all $\alpha\in\Pi_{\bullet}$ and $1\leq p\leq t_0-1$. Firstly, we observe that in the case where $\alpha\neq\gamma'$, it follows from Lemma \ref{3.5} that we have
\begin{equation}\label{eq:A.1}
\widehat{Y}_{\alpha,p}=\prod_{\beta\in I_r}\widehat{Q}_{\beta,k}^{-C_{\alpha\beta}}.
\end{equation}
Our next goal is to compute $\widehat{Y}_{\gamma',p}$ explicitly for all $1\leq p\leq t_0-1$. To this end, we need to consider cases. Firstly, when $\mathfrak{g}$ is of type $BCF$, we have $t_0=2$, in which case the only integral value of $p$ for which $1\leq p\leq t_0-1$ is $p=1$. By observing from Lemma \ref{3.5} that we have 
\begin{equation*}
C\left(\widehat{Q}_{\alpha,1}^{-2},\normord{\prod_{\beta\sim\alpha}\prod_{i=0}^{|C_{\alpha\beta}|-1}\widehat{Q}_{\beta,\lfloor t_{\beta}(1+i)/t_{\alpha}\rfloor}}\right)=0,
\end{equation*}
it follows that we have
\begin{equation}\label{eq:A.2}
\widehat{Y}_{\gamma',1}=q^{\frac{\Lambda_{\gamma\gamma}}{2\delta}}\widehat{Q}_{\gamma,0}\widehat{Q}_{\gamma,1}\prod_{\beta\in\Pi_{\bullet}}\widehat{Q}_{\beta,1}^{-C_{\alpha\beta}}.
\end{equation}
Next, when $\mathfrak{g}$ is of type $G_2$, we have $t_0=3$, in which case the only integral values of $k$ for which $1\leq p\leq t_0-1$ is $p=1,2$. For both values of $p$, it is easy to check that we have
\begin{equation}\label{eq:A.3}
\widehat{Y}_{2,p}=q^2\widehat{Q}_{1,0}^{3-p}\widehat{Q}_{1,1}^p\widehat{Q}_{2,k}^{-2}.
\end{equation}

\subsection{Proof of Lemma \ref{4.4} in the case where \texorpdfstring{$\mathfrak{g}$}{g} is of type \texorpdfstring{$BCF$}{BCF}}

Firstly, it follows from equations \eqref{eq:4.7}, \eqref{eq:4.8} and \eqref{eq:4.11} that we have
\begin{align}
\mathbf{q}_0^{\bullet}-2\mathbf{q}_1^{\bullet}+\mathbf{q}_2^{\bullet}&=C^{\bullet}\mathbf{m}_1^{\bullet}-\mathbf{n}_1^{\bullet},\label{eq:A.4}\\
\mathbf{q} _0^{\circ}-\mathbf{q}_0^{\circ,(1)}&=-D\mathbf{e}_1.\label{eq:A.5}
\end{align}
Next, by equations \eqref{eq:4.27} and \eqref{eq:4.29}, we have
\begin{align}
&\qquad\overline{Q}_k(\mathbf{m},\mathbf{n})-\overline{Q}_k^{(1)}(\mathbf{m},\mathbf{n})\label{eq:A.6}\\
&=\frac{1}{2\delta}\left(2\mathbf{q}_1^{\bullet}\cdot\Lambda^{\bullet}(\mathbf{q}_1^{\bullet}-\mathbf{q}_2^{\bullet})+2\mathbf{q}_0^{\circ}\cdot A(2\mathbf{q}_1^{\bullet}-\mathbf{q}_2^{\bullet})-U_1+2(\Lambda^{\circ}\mathbf{q}_0^{\circ}+A\mathbf{q}_0^{\bullet})\cdot D\mathbf{e}_1\right).\nonumber
\end{align}
By letting $c_1=2(\Lambda^{\circ}\mathbf{q}_0^{\circ}+A\mathbf{q}_0^{\bullet})\cdot D\mathbf{e}_1$, $c_2=2\mathbf{q}_1^{\bullet}\cdot\Lambda^{\bullet}(2\mathbf{q}_1^{\bullet}-\mathbf{q}_2^{\bullet})+2\mathbf{q}_0^{\circ}\cdot A(2\mathbf{q}_1^{\bullet}-\mathbf{q}_2^{\bullet})$, $c_3=-2\mathbf{q}_1^{\bullet}\cdot\Lambda^{\bullet}\mathbf{q}_1^{\bullet}+\sum_{\alpha\in\Pi_{\bullet}}\Lambda_{\alpha\alpha}q_{\alpha,1}(q_{\alpha,1}+1)$ and $c_4=-U_1$, it follows that from \eqref{eq:A.6} that we have 
\begin{equation}\label{eq:A.7}
\frac{1}{2\delta}(c_1+c_2+c_3+c_4)=\overline{Q}_k(\mathbf{m},\mathbf{n})-\overline{Q}_k^{(1)}(\mathbf{m},\mathbf{n})+\frac{1}{2\delta}\sum_{\alpha\in\Pi_{\bullet}}\Lambda_{\alpha\alpha}q_{\alpha,1}(q_{\alpha,1}+1).
\end{equation}
We first use Lemma \ref{3.5}, along with equations \eqref{eq:A.2} and \eqref{eq:A.7}, to rearrange the terms involving the $\widehat{Q}_{\alpha,i}$'s in the quantum generating function $Z_{\lambda,\mathbf{n}}^{(k)}(\widehat{\mathbf{Y}}_{\vec{s}_0})$ as follows:
\begin{align}
\widehat{Q}_{\bullet,1}^{-\mathbf{q}_0^{\bullet}}\widehat{Q}_{\circ,1}^{-\mathbf{q}_0^{\circ}}\widehat{Q}_{\bullet,0}^{\mathbf{q}_1^{\bullet}}\widehat{Q}_{\circ,0}^{\mathbf{q}_1^{\circ}+D\mathbf{e}_1}
&=q^{-\frac{1}{2\delta}c_1}\widehat{Q}_{\circ,0}^{D\mathbf{e}_1}\widehat{Q}_{\bullet,1}^{-\mathbf{q}_0^{\bullet}}\widehat{Q}_{\circ,1}^{-\mathbf{q}_0^{\circ}}\widehat{Q}_{\bullet,0}^{\mathbf{q}_1^{\bullet}}\widehat{Q}_{\circ,0}^{\mathbf{q}_1^{\circ}},\label{eq:A.8}\\
\widehat{Q}_{\bullet,1}^{-\mathbf{q}_0^{\bullet}}
&=(q^{\frac{\Lambda_{\gamma\gamma}}{2\delta}}\widehat{Q}_{\gamma,0}\widehat{Q}_{\gamma,1})^{-m_{\gamma',1}}\widehat{Y}_{\bullet,1}^{\mathbf{m}_1^{\bullet}}\widehat{Q}_{\bullet,1}^{\mathbf{n}_1^{\bullet}-2\mathbf{q}_1^{\bullet}+\mathbf{q}_2^{\bullet}},\label{eq:A.9}\\
\widehat{Q}_{\bullet,1}^{-2\mathbf{q}_1^{\bullet}+\mathbf{q}_2^{\bullet}}\widehat{Q}_{\circ,1}^{-\mathbf{q}_0^{\circ}}\widehat{Q}_{\bullet,0}^{\mathbf{q}_1^{\bullet}}
&=q^{-\frac{1}{2\delta}c_2}\widehat{Q}_{\circ,1}^{-\mathbf{q}_0^{\circ}}\widehat{Q}_{\bullet,0}^{\mathbf{q}_1^{\bullet}}\widehat{Q}_{\bullet,1}^{-2\mathbf{q}_1^{\bullet}+\mathbf{q}_2^{\bullet}},\label{eq:A.10}\\
\widehat{Q}_{\bullet,0}^{\mathbf{q}_1^{\bullet}}\widehat{Q}_{\bullet,1}^{-2\mathbf{q}_1^{\bullet}}
&=q^{\frac{1}{\delta}(\mathbf{q}_1^{\bullet}\cdot\Lambda^{\bullet}\mathbf{q}_1^{\bullet}-\sum_{\alpha\in\Pi_{\bullet}}\Lambda_{\alpha\alpha}q_{\alpha,1}^2)}\prod_{i=1}^s\widehat{Q}_{\alpha_i,0}^{q_{\alpha_i,1}}\widehat{Q}_{\alpha_i,1}^{-2q_{\alpha_i,1}}\nonumber\\
&=q^{-\frac{1}{2\delta}c_3}\prod_{i=1}^s\widehat{Z}_{\alpha_i,0}^{q_{\alpha_i,1}}\widehat{Q}_{\alpha_i,1}^{-q_{\alpha_i,1}}.\label{eq:A.11}
\end{align}
As we have $(D\mathbf{e}_1)_{\alpha}=\delta_{\alpha\gamma}m_{\gamma',1}$, we deduce from Lemmas \ref{3.5} and \ref{3.6}(8) that we have
{\allowdisplaybreaks
\begin{align}
&\qquad\widehat{Q}_{\circ,0}^{D\mathbf{e}_1}(q^{\frac{\Lambda_{\gamma\gamma}}{2\delta}}\widehat{Q}_{\gamma,0}\widehat{Q}_{\gamma,1})^{-m_{\gamma',1}}\widehat{Y}_{\bullet,1}^{\mathbf{m}_1^{\bullet}}\widehat{Q}_{\bullet,1}^{\mathbf{n}_1^{\bullet}}\widehat{Q}_{\circ,1}^{-\mathbf{q}_0^{\circ}}\label{eq:A.12}\\
&=q^{-\frac{1}{2\delta}(c_4-\Lambda_{\gamma\gamma}m_{\gamma',1}^2)}\widehat{Q}_{\circ,0}^{D\mathbf{e}_1}\widehat{Q}_{\circ,1}^{D\mathbf{e}_1}(q^{\frac{\Lambda_{\gamma,\gamma}}{2\delta}}\widehat{Q}_{\gamma,0}\widehat{Q}_{\gamma,1})^{-m_{\gamma',1}}\widehat{Y}_{\bullet,1}^{\mathbf{m}_1^{\bullet}}\widehat{Q}_{\bullet,1}^{\mathbf{n}_1^{\bullet}}\widehat{Q}_{\circ,1}^{-\mathbf{q}_0^{\circ,(1)}}\nonumber\\
&=q^{-\frac{1}{2\delta}c_4}\widehat{Y}_{\bullet,1}^{\mathbf{m}_1^{\bullet}}\widehat{Q}_{\bullet,1}^{\mathbf{n}_1^{\bullet}}\widehat{Q}_{\circ,1}^{-\mathbf{q}_0^{\circ,(1)}}\nonumber\\
&=q^{-\frac{1}{2\delta}c_4}\widehat{Q}_{\bullet,1}^{\mathbf{n}_1^{\bullet}}\widehat{Q}_{\circ,1}^{-\mathbf{q}_0^{\circ,(1)}}\widehat{Y}_{\bullet,1}^{\mathbf{m}_1^{\bullet}}.\nonumber
\end{align}
Finally, we use Lemma \ref{3.6}(6) to deduce that
}
\begin{equation}\label{eq:A.13}
\widehat{Y}_{\bullet,1}^{\mathbf{m}_1^{\bullet}}\prod_{i=1}^{s}\left(\widehat{Z}_{\alpha_i,0}^{q_{\alpha_i,1}}\widehat{Q}_{\alpha_i,1}^{-q_{\alpha_i,1}}\right)
=\prod_{i=1}^{s}\left(\widehat{Y}_{\alpha_i,1}^{m_{\alpha_i,1}}\widehat{Z}_{\alpha_i,0}^{q_{\alpha_i,1}}\widehat{Q}_{\alpha_i,1}^{-q_{\alpha_i,1}}\right).
\end{equation}
By combining equations \eqref{eq:A.7}-\eqref{eq:A.13}, it follows from the definition of $Z_{\lambda,\mathbf{n}}^{(k)}(\widehat{\mathbf{Y}}_{\vec{s}_0})$ that we have
\begin{align}
Z_{\lambda,\mathbf{n}}^{(k)}(\widehat{\mathbf{Y}}_{\vec{s}_0})
&=\sum_{\mathbf{m}^{(0,1)}\geq\mathbf{0}}\left[q^{\overline{Q}_k^{(1)}(\mathbf{m},\mathbf{n})-\frac{1}{2\delta}\sum_{\alpha\in\Pi_{\bullet}}\Lambda_{\alpha\alpha}q_{\alpha,1}(q_{\alpha,1}+1)}\prod_{(\alpha,i)\in J_{\mathfrak{g}}^{(0,1)}}\begin{bmatrix}m_{\alpha,i}+q_{\alpha,i}\\m_{\alpha,i}\end{bmatrix}_q\widehat{Q}_{\bullet,1}^{\mathbf{n}_1^{\bullet}}\widehat{Q}_{\circ,1}^{-\mathbf{q}_0^{\circ,(1)}}\right.\label{eq:A.14}\\
&\qquad\qquad\quad\left.\times\left(\prod_{i=1}^{s}\sum_{m_{\alpha_i,1}\geq0}\begin{bmatrix}m_{\alpha_i,1}+q_{\alpha_i,1}\\m_{\alpha_i,1}\end{bmatrix}_q\widehat{Y}_{\alpha_i,1}^{m_{\alpha_i,1}}\widehat{Z}_{\alpha_i,0}^{q_{\alpha_i,1}}\widehat{Q}_{\alpha_i,1}^{-q_{\alpha_i,1}}\right)\widehat{Q}_{\bullet,1}^{\mathbf{q}_2^{\bullet}}\widehat{Q}_{\circ,0}^{\mathbf{q}_1^{\circ}}\right].\nonumber
\end{align}
As $\overline{Q}_k^{(1)}(\mathbf{m},\mathbf{n})-\frac{1}{2\delta}\sum_{\alpha\in\Pi_{\bullet}}\Lambda_{\alpha\alpha}q_{\alpha,1}(q_{\alpha,1}+1)$ is independent of $m_{\alpha_i,1}$ for all $i=1,\cdots,s$, we may sum over each $m_{\alpha_i,1}$ for all $i=1,\cdots,s$, and apply Lemma \ref{4.6} to write the RHS of \eqref{eq:A.14} as 
\begin{equation}\label{eq:A.15}
\sum_{\mathbf{m}^{(0,1)}\geq\mathbf{0}}q^{\overline{Q}_k^{(1)}(\mathbf{m},\mathbf{n})}\prod_{(\alpha,i)\in J_{\mathfrak{g}}^{(0,1)}}\begin{bmatrix}m_{\alpha,i}+q_{\alpha,i}\\m_{\alpha,i}\end{bmatrix}_q\widehat{Q}_{\bullet,1}^{\mathbf{n}_1^{\bullet}}
\widehat{Q}_{\circ,1}^{-\mathbf{q}_0^{\circ,(1)}}\left(\prod_{i=1}^{s}\widehat{Z}_{\alpha_i,0}^{-1}\widehat{Z}_{\alpha_i,1}\widehat{Q}_{\alpha_i,2}^{-q_{\alpha_i,1}}\right)\widehat{Q}_{\bullet,1}^{\mathbf{q}_2^{\bullet}}\widehat{Q}_{\circ,0}^{\mathbf{q}_1^{\circ}}.
\end{equation}
Finally, we rewrite expression \eqref{eq:A.15} using Lemmas \ref{3.5} and \ref{3.6}(1), (3) and (5) to deduce that $Z_{\lambda,\mathbf{n}}^{(k)}(\widehat{\mathbf{Y}}_{\vec{s}_0})$ is equal to
\begin{equation*}
q^{-\frac{1}{\delta}\sum_{\alpha,\beta\in\Pi_{\bullet}}\Lambda_{\alpha\beta}n_{\alpha,1}}\widehat{Z}_{\bullet,0}^{-1}\widehat{Q}_{\bullet,1}^{\mathbf{n}_1^{\bullet}}\widehat{Z}_{\bullet,1}\sum_{\mathbf{m}^{(0,1)}}q^{\overline{Q}_k^{(1)}(\mathbf{m},\mathbf{n})}\prod_{(\alpha,i)\in J_{\mathfrak{g}}^{(0,1)}}\begin{bmatrix}m_{\alpha,i}+q_{\alpha,i}\\m_{\alpha,i}\end{bmatrix}_q\widehat{Q}_{\bullet,2}^{-\mathbf{q}_1^{\bullet}}\widehat{Q}_{\circ,1}^{-\mathbf{q}_0^{\circ,(1)}}\widehat{Q}_{\bullet,1}^{\mathbf{q}_2^{\bullet}}\widehat{Q}_{\circ,0}^{\mathbf{q}_1^{\circ}}.
\end{equation*}
Lemma \ref{4.4} for the case where $\mathfrak{g}$ is of type $BCF$ now follows from the definition of $Z_{\lambda,\mathbf{n}}^{(k,1)}(\widehat{\mathbf{Y}}_{\vec{s}_{0,1}})$.

\subsection{Proof of Lemma \ref{4.4} in the case where \texorpdfstring{$\mathfrak{g}$}{g} is of type \texorpdfstring{$G_2$}{G}}

In this case, we have $t_0=3$. We will first handle the case when $p=1$. Firstly, it follows from equations \eqref{eq:4.7}, \eqref{eq:4.8} and \eqref{eq:4.11} that we have
\begin{align}
q_{1,0}-q_{1,0}^{(1)}&=-m_{2,1},\quad\text{and}\label{eq:A.16}\\
q_{2,j-1}-2q_{2,j}+q_{2,j+1}&=2m_{2,j}-n_{2,j},\quad j=1,2.\label{eq:A.17}
\end{align}
This implies that $q_{1,0}(q_{2,1}-2q_{2,2}+q_{2,3})=(q_{1,0}^{(1)}-m_{2,1})(2m_{2,2}-n_{2,2})$, from which we deduce that
\begin{equation}\label{eq:A.18}
q_{1,0}q_{2,3}=-q_{1,0}q_{2,1}+2q_{1,0}q_{2,2}+2q_{1,0}^{(1)}m_{2,2}-q_{1,0}^{(1)}n_{2,2}-2m_{2,1}m_{2,2}+m_{2,1}n_{2,2}.
\end{equation}
Our next step is to compute $U_1$ explicitly. To this end, we observe that $\mathbf{e}_1=-2m_{2,1}-m_{2,2}$, $\mathbf{e}_2=m_{2,1}-m_{2,2}$, $\mathbf{e}_3=m_{2,1}+2m_{2,2}$, $\mathbf{f}_1=n_{2,1}+n_{2,2}$, $\mathbf{f}_2=n_{2,2}$ and $\mathbf{f}_3=0$, $t_0D^t\Lambda^{\circ}D=2$. This implies that
\begin{align}
U_1
&=\frac{2}{3}[(-2m_{2,1}-m_{2,2})^2+(m_{2,1}-m_{2,2})^2+(m_{2,1}+2m_{2,2})^2]\nonumber\\
&\quad+2[(-2m_{2,1}-m_{2,2})(n_{2,1}+n_{2,2})+(m_{2,1}-m_{2,2})n_{2,2}]\nonumber\\
&=2(2m_{2,1}^2+2m_{2,1}m_{2,2}+2m_{2,2}^2-2m_{2,1}n_{2,1}-m_{2,1}n_{2,2}-m_{2,2}n_{2,1}-2m_{2,2}n_{2,2}).\label{eq:A.19}
\end{align}
Using the fact that $D\mathbf{e}_1=2m_{2,1}+m_{2,2}$, it follows from combining equations \eqref{eq:4.27}, \eqref{eq:4.31}, \eqref{eq:A.18} and \eqref{eq:A.19} that we have
{\allowdisplaybreaks
\begin{align}
\overline{Q}_k(\mathbf{m},\mathbf{n})-\overline{Q}_k^{(1)}(\mathbf{m},\mathbf{n})
&= 2q_{2,1}(q_{2,1}-q_{2,2})+2q_{1,0}(2q_{2,1}-q_{2,2})+\sum_{j,\alpha=1}^2(3-\alpha)(3-j)m_{2,j}q_{\alpha,0}\label{eq:A.20}\\
&\qquad+ 2m_{2,1}(m_{2,1}-n_{2,1})+m_{2,2}(n_{2,1}-2q_{1,0}^{(1)}-2q_{2,1}+q_{2,2})-q_{1,0}^{(1)}q_{2,1}.\nonumber
\end{align}
By letting $c_1=\sum_{j,\alpha=1}^2(3-\alpha)(3-j)m_{2,j}q_{\alpha,0}$, $c_2= 2q_{2,1}(q_{2,1}-q_{2,2})+2q_{1,0}(2q_{2,1}-q_{2,2})+q_{2,1}(q_{2,1}+1)$, $c_3=2m_{2,1}(m_{2,1}-n_{2,1})$, $c_4=m_{2,2}(n_{2,1}-2q_{1,0}^{(1)}-2q_{2,1}+q_{2,2})$ and $c_5=-q_{1,0}^{(1)}q_{2,1}$, it follows that from equation \eqref{eq:A.20} that we have 
}
\begin{equation}\label{eq:A.21}
c_1+c_2+c_3+c_4+c_5=\overline{Q}_k(\mathbf{m},\mathbf{n})-\overline{Q}_k^{(1)}(\mathbf{m},\mathbf{n})+q_{2,1}(q_{2,1}+1).
\end{equation}
We first use Lemma \ref{3.5}, along with equations \eqref{eq:A.3} and \eqref{eq:A.17} to rearrange the terms involving the $\widehat{Q}_{\alpha,i}$'s in the quantum generating function $Z_{\lambda,\mathbf{n}}^{(k)}(\widehat{\mathbf{Y}}_{\vec{s}_0})$ as follows:
\begin{align}
\widehat{Q}_{2,1}^{-q_{2,0}}\widehat{Q}_{1,1}^{-q_{1,0}}\widehat{Q}_{2,0}^{q_{2,1}}\widehat{Q}_{1,0}^{2m_{2,1}+m_{2,2}}
&=q^{-c_1}\widehat{Q}_{1,0}^{2m_{2,1}+m_{2,2}}\widehat{Q}_{2,1}^{-q_{2,0}}\widehat{Q}_{1,1}^{-q_{1,0}}\widehat{Q}_{2,0}^{q_{2,1}},\label{eq:A.22}\\
\widehat{Q}_{2,1}^{-q_{2,0}}
&=(q^2\widehat{Q}_{1,0}^2\widehat{Q}_{1,1})^{-m_{2,1}}\widehat{Y}_{2,1}^{m_{2,1}}\widehat{Q}_{2,1}^{n_{2,1}-2q_{2,1}+q_{2,2}},\label{eq:A.23}\\
\widehat{Q}_{2,1}^{-2q_{2,1}+q_{2,2}}\widehat{Q}_{1,1}^{-q_{1,0}}\widehat{Q}_{2,0}^{q_{2,1}}\widehat{Q}_{2,1}^{-2q_{2,1}}
&=q^{2(q_{1,0}+q_{2,1})(-2q_{2,1}+q_{2,2})}\widehat{Q}_{1,1}^{-q_{1,0}}\widehat{Q}_{2,0}^{q_{2,1}}\widehat{Q}_{2,1}^{-2q_{2,1}+q_{2,2}}\nonumber\\
&=q^{-c_2}\widehat{Q}_{1,1}^{-q_{1,0}}\widehat{Z}_{2,0}^{q_{2,1}}\widehat{Q}_{2,1}^{-q_{2,1}}\widehat{Q}_{2,1}^{q_{2,2}},\label{eq:A.24}
\end{align}
Next, we deduce from Lemmas \ref{3.5} and \ref{3.6}(8), along with equation \eqref{eq:A.16}, that we have
\begin{align}
&\qquad\widehat{Q}_{1,0}^{2m_{2,1}}(q^2\widehat{Q}_{1,0}^2\widehat{Q}_{1,1})^{-m_{2,1}}\widehat{Y}_{2,1}^{m_{2,1}}\widehat{Q}_{2,1}^{n_{2,1}}\widehat{Q}_{1,1}^{-q_{1,0}}\label{eq:A.25}\\
&= q^{-c_3+2m_{2,1}^2}\widehat{Q}_{1,0}^{2m_{2,1}}\widehat{Q}_{1,1}^{m_{2,1}}(q^2\widehat{Q}_{1,0}^2\widehat{Q}_{1,1})^{-m_{2,1}}\widehat{Y}_{2,1}^{m_{2,1}}\widehat{Q}_{2,1}^{n_{2,1}}\widehat{Q}_{1,1}^{-q_{1,0}^{(1)}}\nonumber\\
&=q^{-c_3}\widehat{Y}_{2,1}^{m_{2,1}}\widehat{Q}_{2,1}^{n_{2,1}}\widehat{Q}_{1,1}^{-q_{1,0}^{(1)}}\nonumber\\
&=q^{-c_3}\widehat{Q}_{2,1}^{n_{2,1}}\widehat{Q}_{1,1}^{-q_{1,0}^{(1)}}\widehat{Y}_{2,1}^{m_{2,1}}.\nonumber
\end{align}
Finally, it follows from Lemmas \ref{3.5} and Lemma \ref{3.6}(8) that we have
\begin{align}
\widehat{Q}_{1,0}^{m_{2,2}}\widehat{Q}_{2,1}^{n_{2,1}}\widehat{Q}_{1,1}^{-q_{1,0}^{(1)}}\widehat{Y}_{2,1}^{m_{2,1}}\widehat{Z}_{2,0}^{q_{2,1}}\widehat{Q}_{2,1}^{-q_{2,1}+q_{2,2}}
&=q^{-c_4}\widehat{Q}_{2,1}^{n_{2,1}}\widehat{Q}_{1,1}^{-q_{1,0}^{(1)}}\widehat{Y}_{2,1}^{m_{2,1}}\widehat{Z}_{2,0}^{q_{2,1}}\widehat{Q}_{2,1}^{-q_{2,1}+q_{2,2}}\widehat{Q}_{1,0}^{m_{2,2}},\label{eq:A.26}\\
\widehat{Q}_{1,1}^{-q_{1,0}^{(1)}},\widehat{Y}_{2,1}^{m_{2,1}}\widehat{Z}_{2,0}^{q_{2,1}}\widehat{Q}_{2,1}^{-q_{2,1}}
&=q^{-c_5}\widehat{Y}_{2,1}^{m_{2,1}}\widehat{Z}_{2,0}^{q_{2,1}}\widehat{Q}_{2,1}^{-q_{2,1}}\widehat{Q}_{1,1}^{-q_{1,0}^{(1)}}.\label{eq:A.27}
\end{align}
By combining equations \eqref{eq:A.21}-\eqref{eq:A.27}, it follows from the definition of $Z_{\lambda,\mathbf{n}}^{(k)}(\widehat{\mathbf{Y}}_{\vec{s}_0})$ that we have
\begin{align}
Z_{\lambda,\mathbf{n}}^{(k)}(\widehat{\mathbf{Y}}_{\vec{s}_0})
&=\sum_{\mathbf{m}^{(0,1)}\geq\mathbf{0}}\left[q^{\overline{Q}_k^{(1)}(\mathbf{m},\mathbf{n})-q_{2,1}(q_{2,1}+1)}\prod_{(\alpha,i)\in J_{\mathfrak{g}}^{(0,1)}}\begin{bmatrix}m_{\alpha,i}+q_{\alpha,i}\\m_{\alpha,i}\end{bmatrix}_q\widehat{Q}_{2,1}^{n_{2,1}}\right.\label{eq:A.28}\\
&\qquad\qquad\left.\times\left(\sum_{m_{2,1}\geq0}\begin{bmatrix}m_{2,1}+q_{2,1}\\m_{2,1}\end{bmatrix}_q\widehat{Y}_{2,1}^{m_{2,1}}\widehat{Z}_{2,0}^{q_{2,1}}\widehat{Q}_{2,1}^{-q_{2,1}}\right)\widehat{Q}_{1,1}^{-q_{1,0}^{(1)}}\widehat{Q}_{2,1}^{q_{2,2}}\widehat{Q}_{1,0}^{q_{1,1}+m_{2,2}}\right].\nonumber
\end{align}
As $\overline{Q}_k^{(1)}(\mathbf{m},\mathbf{n})-q_{2,1}(q_{2,1}+1)$ is independent of $m_{2,1}$, we may sum over $m_{2,1}$, and apply Lemma \ref{4.6} to write the RHS of \eqref{eq:A.28} as

\begin{equation*}
\widehat{Q}_{2,1}^{n_{2,1}}\widehat{Z}_{2,0}^{-1}\widehat{Z}_{2,1}\sum_{\mathbf{m}^{(0,1)}\geq\mathbf{0}}q^{\overline{Q}_k^{(1)}(\mathbf{m},\mathbf{n})}\prod_{(\alpha,i)\in J_{\mathfrak{g}}^{(0,1)}}\begin{bmatrix}m_{\alpha,i}+q_{\alpha,i}\\m_{\alpha,i}\end{bmatrix}_q\widehat{Q}_{2,2}^{-q_{2,1}}\widehat{Q}_{1,1}^{-q_{1,0}^{(1)}}\widehat{Q}_{2,1}^{q_{2,2}}\widehat{Q}_{1,0}^{q_{1,1}+m_{2,2}}.
\end{equation*}
Lemma \ref{4.4} for the case $p=1$ now follows from the definition of $Z_{\lambda,\mathbf{n}}^{(k,1)}(\widehat{\mathbf{Y}}_{\vec{s}_{0,1}})$, with the remaining factor of $q^{-\Lambda_{22}n_{2,1}}$ obtained by commuting $\widehat{Z}_{2,0}^{-1}$ to the left of $\widehat{Q}_{2,1}^{n_{2,1}}$ using Lemma \ref{3.5}.

Finally, we will deal with the case $p=2$. Bearing in mind that we have $q_{2,1}+n_{2,2}=2q_{2,2}-q_{2,3}+2m_{2,2}$, it follows from equations \eqref{eq:4.27} and \eqref{eq:4.29} that we have
\begin{align}
\overline{Q}_k^{(1)}(\mathbf{m},\mathbf{n})-\overline{Q}_k^{(2)}(\mathbf{m},\mathbf{n})
&=2q_{2,2}(q_{2,2}-q_{2,3})+q_{1,0}^{(1)}(2q_{2,2}-q_{2,3})\label{eq:A.29}\\
&\quad+m_{2,2}(2q_{2,1}-q_{2,2}+2q_{1,0}^{(1)}-2m_{2,2}+2n_{2,2}).\nonumber
\end{align}
By letting $c_6=m_{2,2}(2q_{2,1}-q_{2,2}+2q_{1,0}^{(1)})$, $c_7=2q_{2,2}(q_{2,2}-q_{2,3})+q_{1,0}^{(1)}(2q_{2,2}-q_{2,3})+q_{2,2}(q_{2,2}+1)$ and $c_8=-2m_{2,2}(m_{2,2}-n_{2,2})$, it follows from equation \eqref{eq:A.29} that we have
\begin{equation}\label{eq:A.30}
c_6+c_7+c_8=\overline{Q}_k^{(1)}(\mathbf{m},\mathbf{n})-\overline{Q}_k^{(2)}(\mathbf{m},\mathbf{n})+q_{2,2}(q_{2,2}+1).
\end{equation}
We first use Lemma \ref{3.5}, along with equations \eqref{eq:A.3} and \eqref{eq:A.17} to rearrange the terms involving the $\widehat{Q}_{\alpha,i}$'s in the quantum generating function $Z_{\lambda,\mathbf{n}}^{(k,1)}(\widehat{\mathbf{Y}}_{\vec{s}_{0,1}})$ as follows:
\begin{align}
\widehat{Q}_{2,2}^{-q_{2,1}}\widehat{Q}_{1,1}^{-q_{1,0}^{(1)}}\widehat{Q}_{2,1}^{q_{2,2}}\widehat{Q}_{1,0}^{m_{2,2}}
&=q^{-c_6}\widehat{Q}_{1,0}^{m_{2,2}}\widehat{Q}_{2,2}^{-q_{2,1}}\widehat{Q}_{1,1}^{-q_{1,0}^{(1)}}\widehat{Q}_{2,1}^{q_{2,2}},\label{eq:A.31}\\
\widehat{Q}_{2,2}^{-q_{2,1}}
&=(q^2\widehat{Q}_{1,0}\widehat{Q}_{1,1}^2)^{-m_{2,2}}\widehat{Y}_{2,2}^{m_{2,2}}\widehat{Q}_{2,2}^{n_{2,2}-2q_{2,2}+q_{2,3}},\label{eq:A.32}\\
\widehat{Q}_{2,2}^{-2q_{2,2}+q_{2,3}}\widehat{Q}_{1,1}^{-q_{1,0}^{(1)}}\widehat{Q}_{2,1}^{q_{2,2}}
&=q^{(q_{1,0}^{(1)}+2q_{2,2})(-2q_{2,2}+q_{2,3})}\widehat{Q}_{1,1}^{-q_{1,0}^{(1)}}\widehat{Q}_{2,1}^{q_{2,2}}\widehat{Q}_{2,2}^{-2q_{2,2}+q_{2,3}}\nonumber\\
&=q^{-c_7}\widehat{Q}_{1,1}^{-q_{1,0}^{(1)}}\widehat{Z}_{2,1}^{q_{2,2}}\widehat{Q}_{2,2}^{-q_{2,2}}\widehat{Q}_{2,2}^{q_{2,3}}.\label{eq:A.33}
\end{align}
Bearing in mind that $q_{1,0}^{(1)}-q_{1,0}^{(2)}=-2m_{2,2}$, we apply Lemmas \ref{3.5} and \ref{3.6}(8) to deduce that
\begin{align}
&\qquad\widehat{Q}_{1,0}^{m_{2,2}}(q^2\widehat{Q}_{1,0}\widehat{Q}_{1,1}^2)^{-m_{2,2}}\widehat{Y}_{2,2}^{m_{2,2}}\widehat{Q}_{2,2}^{n_{2,2}}\widehat{Q}_{1,1}^{-q_{1,0}^{(1)}}\label{eq:A.34}\\
&= q^{-c_8+2m_{2,2}^2}\widehat{Q}_{1,0}^{m_{2,2}}\widehat{Q}_{1,1}^{2m_{2,2}}(q^2\widehat{Q}_{1,0}\widehat{Q}_{1,1}^2)^{-m_{2,2}}\widehat{Y}_{2,2}^{m_{2,2}}\widehat{Q}_{2,2}^{n_{2,2}}\widehat{Q}_{1,1}^{-q_{1,0}^{(2)}}\nonumber\\
&=q^{-c_8}\widehat{Y}_{2,2}^{m_{2,2}}\widehat{Q}_{2,2}^{n_{2,2}}\widehat{Q}_{1,1}^{-q_{1,0}^{(2)}}\nonumber\\
&=q^{-c_8}\widehat{Q}_{2,2}^{n_{2,2}}\widehat{Q}_{1,1}^{-q_{1,0}^{(2)}}\widehat{Y}_{2,2}^{m_{2,2}}.\nonumber
\end{align}
By combining equations \eqref{eq:A.30}-\eqref{eq:A.34}, it follows from the definition of $Z_{\lambda,\mathbf{n}}^{(k,1)}(\widehat{\mathbf{Y}}_{\vec{s}_{0,1}})$ that we have
\begin{align}
Z_{\lambda,\mathbf{n}}^{(k,1)}(\widehat{\mathbf{Y}}_{\vec{s}_{0,1}})
&=\sum_{\mathbf{m}^{(0,2)}\geq\mathbf{0}}\left[q^{\overline{Q}_k^{(2)}(\mathbf{m},\mathbf{n})-q_{2,2}(q_{2,2}+1)}\prod_{(\alpha,i)\in J_{\mathfrak{g}}^{(0,2)}}\begin{bmatrix}m_{\alpha,i}+q_{\alpha,i}\\m_{\alpha,i}\end{bmatrix}_q\widehat{Q}_{2,2}^{n_{2,2}}\widehat{Q}_{1,1}^{-q_{1,0}^{(2)}}\right.\label{eq:A.35}\\
&\qquad\qquad\quad\left.\times\left(\sum_{m_{2,2}\geq0}\begin{bmatrix}m_{2,2}+q_{2,2}\\m_{2,2}\end{bmatrix}_q\widehat{Y}_{2,2}^{m_{2,2}}\widehat{Z}_{2,1}^{q_{2,2}}\widehat{Q}_{2,2}^{-q_{2,2}}\right)\widehat{Q}_{2,2}^{q_{2,3}}\widehat{Q}_{1,0}^{q_{1,1}}\right].\nonumber
\end{align}
As $\overline{Q}_k^{(2)}(\mathbf{m},\mathbf{n})-q_{2,2}(q_{2,2}+1)$ is independent of $m_{2,2}$, we may sum over $m_{2,2}$, and apply Lemma \ref{4.6} to rewrite the RHS of equation \eqref{eq:A.35} as 
\begin{equation}\label{eq:A.36}
\sum_{\mathbf{m}^{(0,2)}\geq\mathbf{0}}q^{\overline{Q}_k^{(2)}(\mathbf{m},\mathbf{n})}\prod_{(\alpha,i)\in J_{\mathfrak{g}}^{(0,2)}}\begin{bmatrix}m_{\alpha,i}+q_{\alpha,i}\\m_{\alpha,i}\end{bmatrix}_q\widehat{Q}_{2,2}^{n_{2,2}}
\widehat{Q}_{1,1}^{-q_{1,0}^{(2)}}\widehat{Z}_{2,1}^{-1}\widehat{Z}_{2,2}\widehat{Q}_{2,3}^{-q_{2,2}}\widehat{Q}_{2,2}^{q_{2,3}}\widehat{Q}_{1,0}^{q_{1,1}}.
\end{equation}
Finally, we rewrite expression \eqref{eq:A.36} using Lemmas \ref{3.5} and \ref{3.6}(1), (3) and (5) to deduce that $Z_{\lambda,\mathbf{n}}^{(k,1)}(\widehat{\mathbf{Y}}_{\vec{s}_{0,1}})$ is equal to
\begin{equation*}
q^{-2n_{2,2}}\widehat{Z}_{2,1}^{-1}\widehat{Q}_{2,2}^{n_{2,2}}\widehat{Z}_{2,2}\sum_{\mathbf{m}^{(0,2)}}q^{\overline{Q}_k^{(2)}(\mathbf{m},\mathbf{n})}\prod_{(\alpha,i)\in J_{\mathfrak{g}}^{(0,2)}}\begin{bmatrix}m_{\alpha,i}+q_{\alpha,i}\\m_{\alpha,i}\end{bmatrix}_q\widehat{Q}_{2,3}^{-q_{2,2}}\widehat{Q}_{1,1}^{-q_{1,0}^{(2)}}\widehat{Q}_{2,2}^{q_{2,3}}\widehat{Q}_{1,0}^{q_{1,1}}.
\end{equation*}
Lemma \ref{4.4} for the case $p=2$ now follows from the previous case $p=1$, as well as the definition of $Z_{\lambda,\mathbf{n}}^{(k,2)}(\widehat{\mathbf{Y}}_{\vec{s}_{0,2}})$.

\section{Proof of Lemma \ref{4.9}}\label{Appendix B}

For convenience, let us set $p=t_0-1$. Firstly, we observe from equations \eqref{eq:4.8} and \eqref{eq:4.11} that we have
\begin{align}
\mathbf{q}_p^{\bullet}-2\mathbf{q}_{t_0}^{\bullet}+\mathbf{q}_{t_0+1}^{\bullet}
&=C^{\bullet}\mathbf{m}_{t_0}^{\bullet}-\mathbf{n}_{t_0}^{\bullet}+D^t\mathbf{m}_1^{\circ},\label{eq:B.1}\\
\mathbf{q}_0^{(p),\circ}-2\mathbf{q}_1^{\circ}+\mathbf{q}_2^{\circ}
&=C^{\circ}\mathbf{m}_1^{\circ}-\mathbf{n}_1^{\circ}+t_0D\mathbf{m}_{t_0}^{\bullet}-D\mathbf{e}_{t_0+1}.\label{eq:B.2}
\end{align}
Next, we deduce from equations \eqref{eq:4.27} and \eqref{eq:4.29} that we have
\begin{align}
&\qquad\overline{Q}_k^{(p)}(\mathbf{m},\mathbf{n})-\overline{Q}_{k-1}(\mathbf{m}^{(1,0)},\mathbf{n}^{(1)})\label{eq:B.3}\\
&=\frac{1}{\delta}\left[\mathbf{q}_1^{\circ}\cdot\Lambda^{\circ}(\mathbf{q}_1^{\circ}-\mathbf{q}_2^{\circ})+\mathbf{q}_{t_0}^{\bullet}\cdot\Lambda^{\bullet}(\mathbf{q}_{t_0}^{\bullet}-\mathbf{q}_{t_0+1}^{\bullet})+(2\mathbf{q}_1^{\circ}-\mathbf{q}_2^{\circ})\cdot A\mathbf{q}_{t_0}^{\bullet}\right.\nonumber\\
&\quad\qquad\left.-t_0\mathbf{q}_1^{\circ}\cdot A\mathbf{q}_{t_0+1}^{\bullet}-(\Lambda^{\circ}\mathbf{q}_1^{\circ}+A\mathbf{q}_{t_0}^{\bullet})\cdot D\mathbf{e}_{t_0+1}\right].\nonumber
\end{align}
Let $c_1=-(\Lambda^{\circ}\mathbf{q}_1^{\circ}+A\mathbf{q}_{t_0}^{\bullet})\cdot D\mathbf{e}_{t_0+1}$, $c_2=-(\mathbf{q}_1^{\circ}\cdot\Lambda^{\circ}\mathbf{q}_2^{\circ}+\mathbf{q}_{t_0}^{\bullet}\cdot\Lambda^{\bullet}\mathbf{q}_{t_0+1}^{\bullet}+\mathbf{q}_2^{\circ}\cdot A\mathbf{q}_{t_0}^{\bullet}+t_0\mathbf{q}_1^{\circ}\cdot A\mathbf{q}_{t_0+1}^{\bullet})$ and $c_3=\mathbf{q}_1^{\circ}\cdot\Lambda^{\circ}\mathbf{q}_1^{\circ}+\mathbf{q}_{t_0}^{\bullet}\cdot\Lambda^{\bullet}\mathbf{q}_{t_0}^{\bullet}+2\mathbf{q}_1^{\circ}\cdot A\mathbf{q}_{t_0}^{\bullet}+\frac{1}{2}\sum_{\alpha\in I_r}\Lambda_{\alpha\alpha}q_{\alpha,t_{\alpha}}(q_{\alpha,t_{\alpha}}+1)$. Then it follows from equation \eqref{eq:B.3} that we have
\begin{equation}\label{eq:B.4}
\frac{1}{\delta}(c_1+c_2+c_3)=\overline{Q}_k^{(p)}(\mathbf{m},\mathbf{n})-\overline{Q}_{k-1}(\mathbf{m}^{(1,0)},\mathbf{n}^{(1)})+\frac{1}{2\delta}\sum_{\alpha\in I_r}\Lambda_{\alpha\alpha}q_{\alpha,t_{\alpha}}(q_{\alpha,t_{\alpha}}+1).
\end{equation}
We first use Lemma \ref{3.5}, along with equations \eqref{eq:4.35}, \eqref{eq:B.1} and \eqref{eq:B.2} to rearrange the terms involving the $\widehat{Q}_{\alpha,i}$'s in the quantum generating function $Z_{\lambda,\mathbf{n}}^{(k,p)}(\widehat{\mathbf{Y}}_{\vec{s}_{0,p}})$ as follows:
\begin{align}
\widehat{Q}_{\bullet,t_0}^{-\mathbf{q}_{p}^{\bullet}}\widehat{Q}_{\circ,1}^{-\mathbf{q}_0^{\circ,(p)}}
&=\widehat{Q}_{\bullet,1}^{\mathbf{n}_{t_0}^{\bullet}}\widehat{Q}_{\circ,1}^{\mathbf{n}_1^{\circ}}\widehat{Y}_{\bullet,t_0}^{\mathbf{m}_{t_0}^{\bullet}}\widehat{Y}_{\circ,1}^{\mathbf{m}_1^{\circ}}\widehat{Q}_{\bullet,t_0}^{-2\mathbf{q}_{t_0}^{\bullet}}\widehat{Q}_{\circ,1}^{-2\mathbf{q}_1^{\circ}}\widehat{Q}_{\bullet,t_0}^{\mathbf{q}_{t_0+1}^{\bullet}}\widehat{Q}_{\circ,1}^{\mathbf{q}_2^{\circ}+D\mathbf{e}_{t_0+1}},\label{eq:B.5}\\
\widehat{Q}_{\circ,1}^{D\mathbf{e}_{t_0+1}}\widehat{Q}_{\bullet,p}^{\mathbf{q}_{t_0}^{\bullet}}\widehat{Q}_{\circ,0}^{\mathbf{q}_1^{\circ}}
&=q^{-\frac{1}{\delta}c_1}\widehat{Q}_{\bullet,p}^{\mathbf{q}_{t_0}^{\bullet}}\widehat{Q}_{\circ,0}^{\mathbf{q}_1^{\circ}}\widehat{Q}_{\circ,1}^{D\mathbf{e}_{t_0+1}},\label{eq:B.6}\\
\widehat{Q}_{\bullet,t_0}^{\mathbf{q}_{t_0+1}^{\bullet}}\widehat{Q}_{\circ,1}^{\mathbf{q}_2^{\circ}}\widehat{Q}_{\bullet,p}^{\mathbf{q}_{t_0}^{\bullet}}\widehat{Q}_{\circ,0}^{\mathbf{q}_1^{\circ}}
&=q^{-\frac{1}{\delta}c_2}\widehat{Q}_{\bullet,p}^{\mathbf{q}_{t_0}^{\bullet}}\widehat{Q}_{\circ,0}^{\mathbf{q}_1^{\circ}}\widehat{Q}_{\bullet,t_0}^{\mathbf{q}_{t_0+1}^{\bullet}}\widehat{Q}_{\circ,1}^{\mathbf{q}_2^{\circ}},\label{eq:B.7}
\end{align}
and
{\allowdisplaybreaks
\begin{align}
&\qquad\widehat{Q}_{\bullet,t_0}^{-2\mathbf{q}_{t_0}^{\bullet}}\widehat{Q}_{\circ,1}^{-2\mathbf{q}_1^{\circ}}\widehat{Q}_{\bullet,p}^{\mathbf{q}_{t_0}^{\bullet}}\widehat{Q}_{\circ,0}^{\mathbf{q}_1^{\circ}}\nonumber\\
&=q^{-\frac{1}{\delta}(2\mathbf{q}_1^{\circ}\cdot\Lambda^{\circ}\mathbf{q}_1^{\circ}+2\mathbf{q}_{t_0}^{\bullet}\cdot\Lambda^{\bullet}\mathbf{q}_{t_0}^{\bullet}+2\mathbf{q}_1^{\circ}\cdot A\mathbf{q}_{t_0}^{\bullet})}\widehat{Q}_{\bullet,p}^{\mathbf{q}_{t_0}^{\bullet}}\widehat{Q}_{\bullet,t_0}^{-2\mathbf{q}_{t_0}^{\bullet}}\widehat{Q}_{\circ,0}^{\mathbf{q}_1^{\circ}}\widehat{Q}_{\circ,1}^{-2\mathbf{q}_1^{\circ}}\nonumber\\
&=q^{-\frac{1}{\delta}\left(c_3+\frac{1}{2}\sum_{\alpha\in I_r}\Lambda_{\alpha\alpha}q_{\alpha,t_{\alpha}}(q_{\alpha,t_{\alpha}}-1)\right)}\prod_{i=1}^s\left(\widehat{Q}_{\alpha_i,p}^{q_{\alpha_i,t_0}}\widehat{Q}_{\alpha_i,t_0}^{-2q_{\alpha_i,t_0}}\right)\prod_{j=1}^d\left(\widehat{Q}_{\beta_j,0}^{q_{\beta_j,1}}\widehat{Q}_{\beta_j,1}^{-2q_{\beta_j,1}}\right)\nonumber\\
&=q^{-\frac{1}{\delta}c_3}\prod_{i=1}^s\left(\widehat{Z}_{\alpha_i,p}^{q_{\alpha_i,t_0}}\widehat{Q}_{\alpha_i,t_0}^{-q_{\alpha_i,t_0}}\right)\prod_{j=1}^d\left(\widehat{Z}_{\beta_j,0}^{q_{\beta_j,1}}\widehat{Q}_{\beta_j,1}^{-q_{\beta_j,1}}\right).\label{eq:B.8}
\end{align}
Finally, we use Lemma \ref{3.6}(6) and (7) to deduce that
}
\begin{align}
&\quad\widehat{Y}_{\bullet,t_0}^{\mathbf{m}_{t_0}^{\bullet}}\widehat{Y}_{\circ,1}^{\mathbf{m}_1^{\circ}}\prod_{i=1}^s\left(\widehat{Z}_{\alpha_i,p}^{q_{\alpha_i,t_0}}\widehat{Q}_{\alpha_i,t_0}^{-q_{\alpha_i,t_0}}\right)\prod_{j=1}^d\left(\widehat{Z}_{\beta_j,0}^{q_{\beta_j,1}}\widehat{Q}_{\beta_j,1}^{-q_{\beta_j,1}}\right)\nonumber\\
&=\prod_{i=1}^s\left(\widehat{Y}_{\alpha_i,t_0}^{m_{\alpha_i,t_0}}\widehat{Z}_{\alpha_i,p}^{q_{\alpha_i,t_0}}\widehat{Q}_{\alpha_i,t_0}^{-q_{\alpha_i,t_0}}\right)\prod_{j=1}^d\left(\widehat{Y}_{\beta_j,1}^{m_{\beta_j,1}}\widehat{Z}_{\beta_j,0}^{q_{\beta_j,1}}\widehat{Q}_{\beta_j,1}^{-q_{\beta_j,1}}\right).\label{eq:B.9}
\end{align}
We now combine equations \eqref{eq:B.4}-\eqref{eq:B.9} to deduce that
{\allowdisplaybreaks
\begin{align}
Z_{\lambda,\mathbf{n}}^{(k,p)}(\widehat{\mathbf{Y}}_{\vec{s}_{0,p}})
&=\widehat{Q}_{\bullet,1}^{\mathbf{n}_{t_0}^{\bullet}}\widehat{Q}_{\circ,1}^{\mathbf{n}_1^{\circ}}\sum_{\mathbf{m}^{(1,0)}}\left[q^{\overline{Q}_{k-1}(\mathbf{m}^{(1,0)},\mathbf{n}^{(1)})-\frac{1}{2\delta}\sum_{\alpha\in I_r}\Lambda_{\alpha\alpha}q_{\alpha,t_{\alpha}}(q_{\alpha,t_{\alpha}}+1)}\prod_{(\alpha,i)\in J_{\mathfrak{g}}^{(1,0)}}\begin{bmatrix}m_{\alpha,i}+q_{\alpha,i}\\m_{\alpha,i}\end{bmatrix}_q\right.\label{eq:B.10}\\
&\qquad\qquad\times\left(\prod_{i=1}^s\sum_{m_{\alpha_i,t_0}\geq0}\begin{bmatrix}m_{\alpha_i,t_0}+q_{\alpha_i,t_0}\\m_{\alpha_i,t_0}\end{bmatrix}_q\widehat{Y}_{\alpha_i,t_0}^{m_{\alpha_i,t_0}}\widehat{Z}_{\alpha_i,p}^{q_{\alpha_i,t_0}}\widehat{Q}_{\alpha_i,t_0}^{-q_{\alpha_i,t_0}}\right)\nonumber\\
&\qquad\qquad\times\left.\left(\prod_{j=1}^d\sum_{m_{\beta_j,1}\geq0}\begin{bmatrix}m_{\beta_j,1}+q_{\beta_j,1}\\m_{\beta_j,1}\end{bmatrix}_q\widehat{Y}_{\beta_j,1}^{m_{\beta_j,1}}\widehat{Z}_{\beta_j,0}^{q_{\beta_j,1}}\widehat{Q}_{\beta_j,1}^{-q_{\beta_j,1}}\right)\widehat{Q}_{\bullet,t_0}^{\mathbf{q}_{t_0+1}^{\bullet}}\widehat{Q}_{\circ,1}^{\mathbf{q}_2^{\circ}+D\mathbf{e}_{t_0+1}}\right].\nonumber
\end{align}
As $\overline{Q}_{k-1}(\mathbf{m}^{(1,0)},\mathbf{n}^{(1)})-\frac{1}{2\delta}\sum_{\alpha\in I_r}\Lambda_{\alpha\alpha}q_{\alpha,t_{\alpha}}(q_{\alpha,t_{\alpha}}+1)$ is independent of $m_{\beta_j,1}$ and $m_{\alpha_i,t_0}$ for all $i=1,\cdots,s$ and $j=1,\cdots,d$, we may sum over each $m_{\beta_j,1}$ and $m_{\alpha_i,t_0}$ for all $i=1,\cdots,s$ and $j=1,\cdots,d$, and apply Lemma \ref{4.6} to write the RHS of equation \eqref{eq:B.10} as 
}
\begin{align}
&\quad\widehat{Q}_{\bullet,1}^{\mathbf{n}_{t_0}^{\bullet}}\widehat{Q}_{\circ,1}^{\mathbf{n}_1^{\circ}}\sum_{\mathbf{m}^{(1,0)}}\left[q^{\overline{Q}_{k-1}(\mathbf{m}^{(1,0)},\mathbf{n}^{(1)})}\prod_{(\alpha,i)\in J_{\mathfrak{g}}^{(1,0)}}\begin{bmatrix}m_{\alpha,i}+q_{\alpha,i}\\m_{\alpha,i}\end{bmatrix}_q\left(\prod_{i=1}^s\widehat{Z}_{\alpha_i,p}^{-1}\widehat{Z}_{\alpha_i,t_0}\widehat{Q}_{\alpha_i,t_0+1}^{-q_{\alpha_i,t_0}}\right)\right.\label{eq:B.11}\\
&\qquad\qquad\qquad\qquad\left.\times\left(\prod_{j=1}^d\widehat{Z}_{\beta_j,0}^{-1}\widehat{Z}_{\beta_j,1}\widehat{Q}_{\beta_j,2}^{-q_{\beta_j,1}}\right)\widehat{Q}_{\bullet,t_0}^{\mathbf{q}_{t_0+1}^{\bullet}}\widehat{Q}_{\circ,1}^{\mathbf{q}_2^{\circ}+D\mathbf{e}_{t_0+1}}\right].\nonumber
\end{align}
Finally, we rewrite expression \eqref{eq:B.11} using Lemma \ref{3.6}(1), (2), (3) and (4) to get
\begin{equation*}
Z_{\lambda,\mathbf{n}}^{(k,p)}(\widehat{\mathbf{Y}}_{\vec{s}_{0,p}})
=\widehat{Q}_{\bullet,1}^{\mathbf{n}_{t_0}^{\bullet}}\widehat{Q}_{\circ,1}^{\mathbf{n}_1^{\circ}}\widehat{Z}_{\bullet,p}^{-1}\widehat{Z}_{\circ,0}^{-1}\widehat{Z}_{\circ,1}\widehat{Z}_{\bullet,t_0}Z_{\lambda,\mathbf{n}^{(1)}}^{(k-1)}(\widehat{\mathbf{Y}}_{\vec{s}_1}).
\end{equation*}
Lemma \ref{4.9} now follows from Lemma \ref{4.4}, along with a similar argument at the end of the proof of Lemma \ref{4.3}.

\sc{
\address{Department of Mathematics, University of Illinois MC-382, Urbana, IL 61821, U.S.A.}
\email{mlin39@illinois.edu}
}

\end{document}